\def\@tocline#1#2#3#4#5#6#7{\relax
  \ifnum #1>\c@tocdepth 
  \else
    \par \addpenalty\@secpenalty\addvspace{#2}%
    \begingroup \hyphenpenalty\@M
    \@ifempty{#4}{%
      \@tempdima\csname r@tocindent\number#1\endcsname\relax
    }{%
      \@tempdima#4\relax
    }%
    \parindent\z@ \leftskip#3\relax \advance\leftskip\@tempdima\relax
    \rightskip\@pnumwidth plus4em \parfillskip-\@pnumwidth
    #5\leavevmode\hskip-\@tempdima
      \ifcase #1
       \or\or \hskip 1em \or \hskip 2em \else \hskip 3em \fi%
      #6\nobreak\relax
    \dotfill\hbox to\@pnumwidth{\@tocpagenum{#7}}\par
    \nobreak
    \endgroup
  \fi}
\LetLtxMacro{\oldsqrt}{\sqrt}
\renewcommand{\sqrt}[2][]{\,\oldsqrt[#1]{#2}\,}
\def\bmu{\boldsymbol \mu}
\newcommand{\abs}[1]{\lvert #1 \rvert}
\newcommand{\zmod}[1]{\mathbb{Z}/ #1 \mathbb{Z}}
\newcommand{\umod}[1]{(\mathbb{Z}/ #1 \mathbb{Z})^\times}
\newcommand{\dangle}[1]{\left\langle #1 \right\rangle}
\DeclareMathSymbol{\twoheadrightarrow} {\mathrel}{AMSa}{"10}
\DeclareMathOperator{\ord}{ord}
\DeclareMathOperator{\Pic}{Pic}
\DeclareMathOperator{\Aut}{Aut}
\DeclareMathOperator{\End}{End}
\DeclareMathOperator{\Hom}{Hom}
\DeclareMathOperator{\Gal}{Gal}
\DeclareMathOperator{\Mat}{Mat}
\DeclareMathOperator{\Nm}{N}  
\DeclareMathOperator{\Lie}{Lie}
\DeclareMathOperator{\GL}{GL}
\DeclareMathOperator{\Sp}{Sp}
\def\a{{\mathfrak a}} 
\def\c{{\mathfrak c}} 
\def\d{{\mathfrak d}} 
\newcommand{\scrD}{\mathscr{D}}
\newcommand{\ff}{\mathbb{F}}
\newcommand{\qq}{\mathbb{Q}}
\newcommand{\zz}{\mathbb{Z}}
\newcommand{\calO}{\mathcal{O}}
\newcommand{\calE}{\mathcal{E}}
\DeclareMathOperator{\Cl}{Cl}
\DeclareMathOperator{\Isog}{Isog}
\DeclareMathOperator{\Mass}{Mass}
\newcommand{\wt}{\widetilde}
\newcommand{\dieu}{Dieudonn\'{e} }
\def\makeop#1{\expandafter\def\csname#1\endcsname
  {\mathop{\rm #1}\nolimits}\ignorespaces}
\def\makebb#1{\expandafter\def
  \csname bb#1\endcsname{{\mathbb{#1}}}\ignorespaces}
\def\makebf#1{\expandafter\def\csname bf#1\endcsname{{\bf
      #1}}\ignorespaces} 
\def\makegr#1{\expandafter\def
  \csname gr#1\endcsname{{\mathfrak{#1}}}\ignorespaces}
\def\makescr#1{\expandafter\def
  \csname scr#1\endcsname{{\EuScript{#1}}}\ignorespaces}
\def\makecal#1{\expandafter\def\csname cal#1\endcsname{{\mathcal
      #1}}\ignorespaces} 
\def\doLetters#1{#1A #1B #1C #1D #1E #1F #1G #1H #1I #1J #1K #1L #1M
                 #1N #1O #1P #1Q #1R #1S #1T #1U #1V #1W #1X #1Y #1Z}
\def\doletters#1{#1a #1b #1c #1d #1e #1f #1g #1h #1i #1j #1k #1l #1m
                 #1n #1o #1p #1q #1r #1s #1t #1u #1v #1w #1x #1y #1z}
\def\ol{\overline}
\def\wt{\widetilde}
\def\ul{\underline}
\newcommand{\isoto}{\stackrel{\sim}{\longrightarrow}}
\newcommand{\embed}{\hookrightarrow}
\def\Fpbar{\overline{\bbF}_p}
\def\Fqbar{\overline{\bbF}_q}
\def\Fp{{\bbF}_p}
\def\Fq{{\bbF}_q}
\def\Qp{{\bbQ}_p}
\def\Qbar{\overline{\bbQ}}
\def\ac{algebraically closed }
\def\ch{characteristic }
\newcommand{\Z}{\mathbb Z}
\newcommand{\Q}{\mathbb Q}
\newcommand{\C}{\mathbb C}
\newcommand{\F}{\mathbb F}
\newcommand{\npr}{\noindent }
\newcounter{thmcounter} 
\numberwithin{thmcounter}{section}
\newtheorem{thm}[thmcounter]{Theorem}
\newtheorem{lem}[thmcounter]{Lemma}
\newtheorem{lemma}[thmcounter]{Lemma}
\newtheorem{cor}[thmcounter]{Corollary}
\newtheorem{prop}[thmcounter]{Proposition}
\theoremstyle{definition}
\newtheorem{defn}[thmcounter]{Definition}
\newtheorem{rem}[thmcounter]{Remark}
\newtheorem{remark}[thmcounter]{Remark}
\numberwithin{equation}{section}
\newtheoremstyle{notitle}  
  {}
  {}
  {\itshape}
  {}
  {}
  {\ }
  {.5em}
  {}
\theoremstyle{notitle}
\title[Superspecial abelian varieties]{On superspecial 
  abelian varieties over finite fields}
\author{Jiangwei Xue,Tse-Chung Yang and Chia-Fu Yu}
\address{(Xue) Collaborative Innovation Centre of Mathematics, 
School of Mathematics and Statistics, Wuhan University, Luojiashan,
 Wuhan, Hubei, 430072, P.R. China.}
\email{xue\_j@whu.edu.cn}
\address{(Yang) Institute of Mathematics, Academia Sinica,
  Astronomy-Mathematics Building, 6F, No. 1, Sec. 4, Roosevelt Road,
  Taipei 10617, TAIWAN.} 
\email{tsechung@math.sinica.edu.tw}
\address{(Yu) Institute of Mathematics,
  Academia Sinica and NCTS, Astronomy-Mathematics
  Building, No. 1, Sec. 4, Roosevelt Road, Taipei 10617, TAIWAN.}
\email{chiafu@math.sinica.edu.tw} 
\begin{document}

\date{\today} \subjclass[2010]{11R52, 11G10} \keywords{supersingular
  abelian 
  surfaces, class number formula, Galois cohomology.}

\begin{abstract}
In this paper we establish a new lattice description for superspecial
abelian varieties over a finite field $\Fq$ of $q=p^a$ elements. 
Our description depends on the parity of the exponent $a$ of $q$. 
When $q$ is an odd power of the prime $p$, we give 
an explicit formula for the number of superspecial abelian 
surfaces over $\Fq$.


\end{abstract}

\maketitle



\section{Introduction}
\label{sec:intro}

Throughout this paper $p$ denotes a prime
number, and $q=p^a$ a power of $p$ with an exponent $a\in \bbN$, the
set of strictly positive integers. The goal of this paper is to calculate explicitly the number of
superspecial abelian surfaces over a finite field $\Fq$.
This can be regarded as a natural extension of 
works of the authors 
\cite{xue-yang-yu:num_inv, xue-yang-yu:ECNF} and 
the last named author \cite{yu:sp_prime} contributed to 
the study of supersingular abelian varieties over finite fields.



Recall that an abelian variety over a field $k$ of
characteristic $p$ is said to be {\it supersingular} if it is
isogenous to a product of supersingular elliptic curves over an
algebraic closure 
$\bar k$ of $k$; it is said to be {\it superspecial} if it is
isomorphic to a product of supersingular elliptic curves over $\bar
k$. As any supersingular abelian variety is isogenous to a
superspecial abelian variety, it is very common to study
supersingular abelian varieties through investigating 
the classification of superspecial abelian varieties. 






For any integer $d\ge 1$, let $\Sp_d(\F_q)$
denote the set of isomorphism classes of $d$-dimensional 
superspecial abelian varieties over the finite field $\F_q$ of $q$ 
elements. The case where $d=1$ concerns the classification of
supersingular elliptic curves over finite fields. 
The theory of elliptic curves over finite fields 
has been studied by Deuring since 1940's and becomes well known. 
There are explicit descriptions for each isogeny class; 
see Waterhouse \cite[Section 4]{waterhouse:thesis}.
However, the authors could not find an explicit formula for
$|\Sp_1(\Fq)|$ in the literature.   
For the sake of completeness we include a formula for
$|\Sp_1(\Fq)|$,  
based on the exposition of Deuring's results by Waterhouse
\cite{waterhouse:thesis}. 
The goal of the present paper is then to find an explicit 
formula for the number $|\Sp_d(\Fq)|$ in the case where $d=2$. 

Before stating our main results, we describe a basic method of
counting $\Sp_d(\Fq)$. For simplicity assume that $\F_q=\Fp$ is
the prime finite field for the moment. 
One can divide the finite set $\Sp_d(\F_p)$ into finitely many subsets
according to the isogeny classes of members. 
Therefore, it suffices to classify
all $d$-dimensional supersingular isogeny classes and to count
the number of superspecial members in each supersingular 
isogeny class. The Honda-Tate theorem allows us to describe
isogeny classes over $\Fq$ 
in terms of multiple Weil $q$-numbers (which are simply finite 
nonnegative integral formal sums of Weil $q$-numbers up to conjugate;
see Section~\ref{sec:curve.1}). 
If $\pi$ is a supersingular multiple Weil $q$-number,
we denote by $[X_\pi]$ the corresponding supersingular isogeny class
(here $X_\pi$ is an abelian variety in this class), 
$H(\pi)$ the number of isomorphism classes of abelian varieties in
$[X_\pi]$ and $H_{sp}(\pi)$ the number of isomorphism classes of 
{\it superspecial} abelian varieties in $[X_\pi]$. Then we have
\begin{equation}
  \label{eq:intro.1}
  |\Sp_d(\Fp)|=\sum_{\pi} H_{sp}(\pi)
\end{equation}
where $\pi$ runs through all supersingular multiple Weil $p$-numbers with
$\dim X_\pi=d$. We classify all possible isogeny 
classes $\pi$'s occurring in the sum (see Sections 2--3).  
The problem then is to compute each term
$H_{sp}(\pi)$.  One should
distinguish the cases according to whether the endomorphism algebra
$\End^0(X_\pi)=\End(X_\pi)\otimes \Q$ of $X_\pi$ 
satisfies the Eichler condition \cite[Section~III.4, p.81]{vigneras} or not. 
We now focus on the case where $d=2$.

Consider the case where $\pi$ is the Weil $p$-number
$\sqrt{p}$. Correspondingly,  $X_\pi$ is a supersingular abelian
surface. It is known (see Tate \cite{tate:eav}) that 
the endomorphism algebra $\End^0(X_\pi)$ of $X_\pi$ is isomorphic to 
the totally definite quaternion algebra algebra
$D=D_{\infty_1,\infty_2}$ over the quadratic real field 
$F=\Q(\sqrt{p})$ ramified exactly at 
the two real places $\{\infty_1,\infty_2\}$ of $F$.  
In this case all abelian surfaces in the isogeny
class $[X_{\sqrt{p}}]$ are superspecial,
i.e.~$H(\sqrt{p})=H_{sp}(\sqrt{p})$. When $p=2$ or $p\equiv 3 \pmod
4$, Waterhouse proved that the
number $H(\sqrt{p})$ is equal to the class number
$h(D_{})$ of $D_{}$. 
When $p \equiv 1 \pmod 4$, the number
$H(\sqrt{p})$ is equal to the sum of $h(D_{})$ 
and the class numbers of two other proper $\Z[\sqrt{p}]$-orders in
$D_{}$ of index $8$ and $16$, respectively 
(the descriptions of these orders are made concrete by results of \cite{yu:smf}). 
These class numbers are computed systematically in our previous work \cite{xue-yang-yu:ECNF}. 
Therefore,  we obtain an explicit formula for the term
$H_{sp}(\sqrt{p})$ given below. 
In what follows we write $K_{m,j}$ for the number field 
$\Q(\sqrt{m},\sqrt{-j})$ for any square-free integers $m>1$ and $j\ge
1$. If  $m\equiv 1 \pmod 4$, then we define
\begin{equation}
  \label{eq:varpi_d}
  \varpi_m:=3 [O_{\Q(\sqrt{m})}^\times: \Z[\sqrt{m}]^\times]^{-1},
\end{equation}
where $O_{\Q(\sqrt{m})}$ denotes the ring of integers of $\Q(\sqrt{m})$. 
By similar arguments as those in \cite[Lemma~4.1 and
Section~4.2]{xue-yang-yu:num_inv}, we have $\varpi_m\in\{1,3\}$, and 
$\varpi_m=3$ if $m\equiv 1\pmod 8$. The class number of a number field
$K$ is denoted by $h(K)$. When $K=\Q(\sqrt{m})$, we write
$h(\sqrt{m})$ for $h(\Q(\sqrt{m}))$ instead. 

\begin{thm}\label{1.2}
  Let $H(\sqrt{p})$ be the number of $\ff_p$-isomorphism 
  classes of abelian
  varieties in the simple isogeny class corresponding to the Weil
  $p$-number $\pi=\sqrt{p}$, and let $F=\Q(\sqrt{p})$. Then\\
(1)  $H(\sqrt{p})=1,2,3$ for $p=2,3, 5$, respectively. \\
(2)  For $p>5$ and $p\equiv 3 \pmod 4$, we have
    \begin{equation}
      \label{eq:intro.2}
     H(\sqrt{p})=\frac{1}{2}h(F)\zeta_F(-1) +
     \left(\frac{3}{8}+\frac{5}{8}\left(2-\left(\frac{2}{p}\right)
    \right)\right)h(K_{p,1})+\frac{1}{4}h(K_{p,2})+
    \frac{1}{3}h(K_{p,3}),    
\end{equation}
where $\zeta_F(s)$ is the Dedekind zeta function
of $F$.\\
(3) For $p>5$ and $p\equiv 1 \pmod 4$, we have
    \begin{equation}
      \label{eq:1.2}
   H(\sqrt{p})=       
      \begin{cases}
        8 \zeta_F(-1)h(F)+ h(K_{p,1})+\frac{4}{3}
        h(K_{p,3}) & \text{for $p\equiv 1 \pmod 8$;} \\
      \frac{1}{2}(15\varpi_p+1) \zeta_F(-1)h(F)+
        \frac{1}{4}(3\varpi_p+1) h(K_{p,1})
      +\frac{4}{3} h(K_{p,3}) & \text{for $p\equiv 5 \pmod 8$;} \\
      \end{cases} 
   \end{equation}
\end{thm}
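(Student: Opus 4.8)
The plan is to reduce the count $H(\sqrt{p})$ to class numbers of quaternion orders over $F=\Q(\sqrt{p})$ and then invoke the optimal-embedding/class-number formulae established in the authors' earlier work \cite{xue-yang-yu:ECNF}. First I would record the structural input from Tate's theorem: every abelian surface in the isogeny class $[X_{\sqrt p}]$ is superspecial, and $\End^0(X_{\sqrt p})\simeq D=D_{\infty_1,\infty_2}$, the totally definite quaternion algebra over $F$ ramified exactly at the two archimedean places. Hence $H(\sqrt p)=H_{sp}(\sqrt p)$, and by Waterhouse's dictionary \cite{waterhouse:thesis} the isomorphism classes in the isogeny class are in bijection with the ideal classes of the relevant $\Z[\sqrt p]$-orders in $D$: when $p=2$ or $p\equiv 3\pmod 4$ the only order that occurs is the maximal order $\calO_D$, so $H(\sqrt p)=h(D)$; when $p\equiv 1\pmod 4$ one gets $h(\calO_D)$ plus the class numbers of the two non-maximal proper $\Z[\sqrt p]$-orders of index $8$ and $16$ identified in \cite{yu:smf}. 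For the three small primes $p=2,3,5$ one checks directly (e.g. via Eichler's mass formula together with the absence of nontrivial two-sided ideal classes, or by consulting Waterhouse) that $H(\sqrt p)=1,2,3$ respectively, which disposes of part (1).

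For parts (2) and (3), with $p>5$, I would quote the explicit class-number formulae of \cite{xue-yang-yu:ECNF} for $h(\calO_D)$ and for the two non-maximal orders. These are built from the Eichler mass $\Mass = \tfrac{1}{2}\,\zeta_F(-1)\,h(F)$ (up to the appropriate local normalizing factors at $p$) together with elliptic-term contributions coming from optimal embeddings of orders in the CM fields $K_{p,1}=\Q(\sqrt p,\sqrt{-1})$, $K_{p,2}=\Q(\sqrt p,\sqrt{-2})$ and $K_{p,3}=\Q(\sqrt p,\sqrt{-3})$ into the quaternion orders; each such embedding count is governed by a product of local embedding numbers, which in turn are controlled by whether $p$ splits, ramifies, or is inert in the imaginary quadratic subfields $\Q(\sqrt{-1})$, $\Q(\sqrt{-2})$, $\Q(\sqrt{-3})$ — i.e. by the Kronecker symbols $\left(\tfrac{-1}{p}\right)$, $\left(\tfrac{-2}{p}\right)$, $\left(\tfrac{-3}{p}\right)$, and hence by $p\bmod 8$ and $p\bmod{24}$. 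Assembling the mass term and the elliptic terms via the trace formula / Eichler class-number formula gives $h(\calO_D)$; I would then do the same for the index-$8$ and index-$16$ orders, where the local factors at $p$ differ and produce the coefficients $\varpi_p$ (this is exactly where $\varpi_p=3[\,O_{\Q(\sqrt p)}^\times:\Z[\sqrt p]^\times]^{-1}$ enters, reflecting the unit-index correction in passing to a non-maximal order). Summing $h(\calO_D)$ with the two extra class numbers in the $p\equiv 1\pmod 4$ case and collecting like terms yields the stated formulae; the case split $p\equiv 1\pmod 8$ versus $p\equiv 5\pmod 8$ comes from the behaviour of $\Q(\sqrt{-1})$ at $p$ (the $K_{p,2}$-term disappears here because $2$ is handled inside the order structure), and the $K_{p,3}$-coefficients $\tfrac13$ versus $\tfrac43$ similarly from $\left(\tfrac{-3}{p}\right)$ and the local embedding numbers at $p$.

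Concretely, the ordered steps are: (i) invoke Tate to get $D$ and the superspecial $=$ full isogeny class identity; (ii) invoke Waterhouse to identify $H(\sqrt p)$ with a sum of ideal class numbers of explicit $\Z[\sqrt p]$-orders in $D$, using \cite{yu:smf} to pin down those orders when $p\equiv 1\pmod 4$; (iii) settle $p=2,3,5$ by direct computation for part (1); (iv) for $p>5$, feed each of these orders into the Eichler class-number formula, writing the answer as a mass term $\propto \zeta_F(-1)h(F)$ plus elliptic terms indexed by optimal embeddings of CM orders in $K_{p,1},K_{p,2},K_{p,3}$; (v) evaluate the local embedding numbers — this is the place where $\left(\tfrac{2}{p}\right)$, the congruence of $p$ modulo $8$ and modulo $24$, and the unit index defining $\varpi_p$ all show up — and substitute the closed forms for those CM class numbers from \cite{xue-yang-yu:ECNF}; (vi) add up and simplify to reach \eqref{eq:intro.2} and \eqref{eq:1.2}.

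The main obstacle is step (v) together with the bookkeeping in step (iv) for the two non-maximal orders: computing the local optimal-embedding numbers at the prime $p$ for orders that are not maximal (index $8$ and $16$ in $\calO_D$) is delicate, because the usual clean formulae for embeddings into maximal/Eichler orders no longer apply, and one must track how the Eichler condition can fail and how the two-sided ideal class group and the unit indices intervene — this is exactly why the coefficients in \eqref{eq:1.2} are affine-linear in $\varpi_p$ rather than constant. Once the correct local factors are in hand, merging them with the CM class-number formulae of \cite{xue-yang-yu:ECNF} and collecting terms is routine algebra, and the dependence on $p\bmod 8$ (and $p\bmod{24}$ inside the $h(K_{p,3})$ contribution) falls out automatically.
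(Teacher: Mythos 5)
Your proposal follows essentially the same route as the paper: Tate's theorem identifies $\End^0(X_{\sqrt p})$ with the totally definite quaternion algebra $D_{\infty_1,\infty_2}$ over $F=\Q(\sqrt p)$, Waterhouse's results reduce $H(\sqrt p)$ to $h(D)$ (for $p=2$ or $p\equiv 3\pmod 4$) or to the sum of $h(D)$ and the class numbers of the two proper $\Z[\sqrt p]$-orders of index $8$ and $16$ from \cite{yu:smf} (for $p\equiv 1\pmod 4$), and the explicit evaluation is then delegated to the generalized Eichler class number formula of \cite{xue-yang-yu:ECNF}. The paper itself gives no further proof beyond this citation chain, so your outline — including the attribution of the $\varpi_p$-coefficients and the $p\bmod 8$ case split to the local embedding numbers and unit indices computed there — is an accurate reconstruction of the intended argument.
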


The computation in Theorem~\ref{1.2} is based on 
the generalized Eichler class
formula \cite[Theorem 1.4]{xue-yang-yu:ECNF} 
that the authors developed. This formula allows us to compute the class
number of an arbitrary $\Z$-order in a totally definite quaternion
over a totally real field $F$. This $\Z$-order does not necessarily contains the maximal
order $O_F$ of $F$. For a quadratic real field $F$, the special zeta value $\zeta_F(-1)$ 
 can be calculated by 
Siegel's formula \cite[Table 2, p. 70]{Zagier-1976-zeta} 
\begin{equation}
  \label{eq:intro.35}
  \zeta_F(-1)=\frac{1}{60}\sum_{\substack{b^2+4ac=\d_F\\ a,c>0}} a,
\end{equation}
where  $\d_F$ is the discriminant of $F/\Q$, $b\in \zz$ 
and $a,c\in \bbN$.   


The first main result of this paper gives the following explicit formula 
for $\abs{\Sp_2(\F_p)}$, the number of isomorphism classes of superspecial abelian surfaces over $\Fp$. 
To obtain this formula, we calculate all  terms $H_{sp}(\pi)$ with 
$\pi\neq \pm \sqrt{p}$ in
(\ref{eq:intro.1}),  and then sum them up together with $H(\sqrt{p})$. The computation of
$H_{sp}(\pi)$ uses a lattice description for superspecial abelian
varieties; see Section 5 for details.  
Similar to Theorem~\ref{1.2}, special 
attentions have to be paid to the cases with small primes $p$. 


\begin{thm}\label{1.3}
  We have $|\Sp_2(\Fp)|=H(\sqrt{p})+\Delta(p)$, where the formula for
  $H(\sqrt{p})$ is stated in Theorem~\ref{1.2} and $\Delta(p)$ is the
  number described as follows. 
  \begin{enumerate}
  \item $\Delta(p)=15,20,9$ for $p=2,3,5$,
  respectively.
  \item For $p>5$ and $p\equiv 1 \pmod 4$, we have 
  \begin{equation}
    \label{eq:intro.4}
    \Delta(p)=(\varpi_p+1) h(K_{p,3})+h(K_{2p,1})
  +h(K_{3p,3})+h(\sqrt{-p}),
  \end{equation}

  \item For $p>5$ and $p\equiv 3 \pmod 4$, we have 
  \begin{equation}
    \label{eq:intro.5}
   \Delta(p)=h(K_{p,3})+h(K_{2p,1})+(\varpi_{3p}+1)
   h(K_{3p,3})+\left(4-\left (\frac{2}{p}\right ) \right )
   h(\sqrt{-p}).
  \end{equation}
  \end{enumerate}
\end{thm}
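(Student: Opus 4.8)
The plan is to evaluate the right-hand side of \eqref{eq:intro.1} in the case $d=2$, $q=p$, term by term. By the classification of two-dimensional supersingular isogeny classes over $\Fp$ carried out in Sections~2 and~3, for $p>5$ the multiple Weil $p$-numbers $\pi$ with $\dim X_\pi=2$ fall into four families: (i) the simple class of $\pi=\sqrt p$, every member of which is superspecial, so that $H_{sp}(\sqrt p)=H(\sqrt p)$ is given by Theorem~\ref{1.2}; (ii) three simple classes, attached to Weil $p$-numbers of the shape $\zeta\sqrt p$ for suitable roots of unity $\zeta$, whose members admit complex multiplication by the quartic CM fields $K_{p,3}$, $K_{2p,1}$, $K_{3p,3}$ respectively; and (iii) the single non-simple class $\pi=2\,[\sqrt{-p}]$ of surfaces $\Fp$-isogenous to the square of a supersingular elliptic curve, for which $\End^0(X_\pi)\cong\Mat_2(\Q(\sqrt{-p}))$. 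Since $\sqrt p$ and $-\sqrt p$ are conjugate, this already yields $\abs{\Sp_2(\Fp)}=H(\sqrt p)+\Delta(p)$ with $\Delta(p)=\sum_{\pi\neq\pm\sqrt p}H_{sp}(\pi)$, and it remains to compute the four residual contributions.

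For each residual $\pi$ I would invoke the lattice description of superspecial abelian varieties established in Section~5. When $\End^0(X_\pi)$ is a quartic CM field $K$, that description identifies the superspecial members of $[X_\pi]$ with the ideal classes of a specific $\Z$-order $\OO\subseteq K$, so that $H_{sp}(\pi)=h(\OO)$; here $\OO$ is maximal away from $\{2\}$ (the potential non-maximality at $p$ being excluded by the Dieudonn\'e-module input of Section~5), and its conductor at $2$ is dictated by the very unit index $[O_{\Q(\sqrt m)}^\times:\Z[\sqrt m]^\times]$ appearing in \eqref{eq:varpi_d}, where $\Q(\sqrt m)$ is the real quadratic subfield of $K$, so that $m\in\{p,2p,3p\}$. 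A relative class-number computation comparing $h(\OO)$ with $h(K)$ then produces $(\varpi_p+1)h(K_{p,3})$, $h(K_{2p,1})$, and $(\varpi_{3p}+1)h(K_{3p,3})$; note that $\varpi_m$ is defined exactly when $m\equiv1\pmod 4$, which forces $p\equiv1\pmod 4$ when $m=p$ and $p\equiv3\pmod 4$ when $m=3p$, while $m=2p$ never meets this condition --- this is precisely why the three coefficients appear as in \eqref{eq:intro.4} and \eqref{eq:intro.5}. For the split class $\pi=2\,[\sqrt{-p}]$, the lattice description presents the superspecial members as rank-two lattices over an order in $\Q(\sqrt{-p})$ equipped with a refinement datum at $p$; the lattice classes are counted by $h(\sqrt{-p})$, and the number of admissible refinements is $1$ when $p\equiv1\pmod 4$ and $4-\left(\frac2p\right)$ when $p\equiv3\pmod 4$, the latter because $-p\equiv1\pmod 4$ alters the $2$-adic position of $\Z[\sqrt{-p}]$ inside $O_{\Q(\sqrt{-p})}$ --- a purely local count at $2$.

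Adding $H(\sqrt p)$ to the four residual terms and collecting gives \eqref{eq:intro.4} and \eqref{eq:intro.5}. The exceptional primes $p=2,3,5$ must be handled separately: there the classification of Sections~2 and~3 acquires further supersingular isogeny classes --- supersingular elliptic curves over $\Fp$ may now carry a nonzero Frobenius trace --- and several of the class numbers $h(K_{m,j})$, $h(\sqrt{-p})$ degenerate or coincide, so one applies the lattice description of Section~5 to each isogeny class individually and obtains $\Delta(2)=15$, $\Delta(3)=20$, $\Delta(5)=9$.

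The heart of the argument is the second step: pinning down, for each isogeny class, the exact $\Z$-order that superspeciality cuts out inside $\End^0(X_\pi)$. This is where the crystalline, or Dieudonn\'e-module, analysis of Section~5 does the real work --- superspeciality is a condition on the $\Fpbar$-isomorphism type, equivalently on the $a$-number being maximal, and translating it into a $p$-adic lattice condition, and thence into the precise conductor of the resulting order, is delicate and is the step most prone to error. Once the order is in hand, its class number is extracted from the generalized Eichler/Deuring class-number formula of \cite{xue-yang-yu:ECNF} together with the unit-index bookkeeping already recorded around \eqref{eq:varpi_d}; the remaining arithmetic is routine.
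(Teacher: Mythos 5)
Your overall architecture is the paper's: decompose $\Sp_2(\Fp)$ by isogeny class via \eqref{eq:intro.1}, peel off $H_{sp}(\sqrt p)=H(\sqrt p)$, identify the residual classes for $p>5$ as the three simple CM classes $\sqrt p\zeta_3$, $\sqrt p\zeta_8$, $\sqrt p\zeta_{12}$ (with fields $K_{p,3}$, $K_{2p,1}$, $K_{3p,3}$) plus the self-product of $\sqrt{-p}$, apply the lattice description of Theorem~\ref{prop:sp.1}, and treat $p=2,3,5$ by hand. But the central counting step is wrong as you state it. Theorem~\ref{prop:sp.1} identifies $\Sp(\pi)$ with the set of isomorphism classes of \emph{$\calR_{sp}$-lattices} in $V$, where $\calR_{sp}=\Z[\pi_0,\pi_0^2/p]$; when $V=K$ this is \emph{not} the Picard group of a single order $\OO$, because an $\calR_{sp}$-lattice can have any multiplicator ring $B$ with $\calR_{sp}\subseteq B\subseteq O_K$. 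One must first verify that $\calR_{sp}$ is a Bass order (Section~\ref{sec:arithmetic-results}) so that lattices with multiplicator ring $B$ are classified by $\Pic(B)$, and then
\[ H_{sp}(\pi)=\sum_{\calR_{sp}\subseteq B\subseteq O_K} h(B). \]
Your formula $H_{sp}(\pi)=h(\OO)$ fails exactly in the cases that generate the $\varpi$-coefficients: for $\pi=\sqrt p\zeta_3$ with $p\equiv 1\pmod 4$ one has $[O_K:\calR_{sp}]=4$, the only intermediate orders are $\calR_{sp}$ and $O_K$, and the count is $h(\calR_{sp})+h(O_K)=\varpi_p h(K_{p,3})+h(K_{p,3})$; no single order has class number $(\varpi_p+1)h(K_{p,3})$, and the ``relative class-number computation'' you invoke would return $\varpi_p h(K)$, not $(\varpi_p+1)h(K)$.

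The self-product class has the analogous defect: there $V\cong K^2$ with $K=\Q(\sqrt{-p})$, and the count is carried out via the Borevich--Faddeev classification of rank-two lattices over the Bass order $\Z[\sqrt{-p}]$, i.e.\ a sum over decomposition types $(B_1,B_2)$ of contributions $h(O_K)+h(O_K)+h(\Z[\sqrt{-p}])=\bigl(4-\bigl(\tfrac{2}{p}\bigr)\bigr)h(\sqrt{-p})$ when $p\equiv 3\pmod 4$ (this is \cite[Theorem 1.1]{yu:sp_prime}); it is not ``$h(\sqrt{-p})$ times a number of admissible refinements,'' and the relevant local index is at $2$ (where $[O_K:\Z[\sqrt{-p}]]=2$), not at $p$ --- $\calR_{sp}$ is always maximal at $p$. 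Finally, the conductor of $\calR_{sp}$ is read off from the explicit generators $\pi_0$ and $\pi_0^2/p$ (e.g.\ $\calR_{sp}=\Z[\sqrt p,\zeta_3]$ for $\pi=\sqrt p\zeta_3$), not ``dictated by'' the unit index in \eqref{eq:varpi_d}; that index enters only afterwards, in evaluating $h(\calR_{sp})$ via \eqref{eq:classNo-order}.
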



A key ingredient of our computation for $\Sp_2(\Fp)$ is
Theorem~\ref{prop:sp.1}, which works only for the prime finite
fields. Centeleghe and Stix \cite{MR3317765} provide a categorical
description of Theorem~\ref{prop:sp.1} (also compare \cite[Theorem
3,1]{yu:sp_prime}). However, their results are also limited to the
prime finite fields. When the base field $\Fq$ is no longer the prime
finite field, direct calculations via the counting method described
earlier for $\Sp_d(\Fq)$ (even when $d=2$) become more complicated.

Our second main result extends the computations of  $\Sp_2(\Fp)$ to
$\Sp_2(\Fq)$ for more general finite fields $\Fq$ via Galois cohomology. 
Observe that if $d>1$, then there is only one isomorphism class of 
$d$-dimensional superspecial abelian varieties over $\Fpbar$ (see
\cite[Section 1.6, p.~13]{li-oort} or Theorem~\ref{gal.sp}). 
Suppose $X_0$ is any $d$-dimensional 
superspecial abelian variety over $\Fp$. 
Then there is a bijection of finite sets
\begin{equation}
  \label{eq:intro.6}
  \Sp_d(\Fp)\simeq H^1(\Gamma_{\Fp}, G), \quad d>1,
\end{equation}
where $\Gamma_{\Fp}=\Gal(\Fpbar/\Fp)$ is the absolute Galois group
of $\Fp$, and $G=\Aut(X_0\otimes \Fpbar)$. 
Thus, computing the Galois
cohomology would lead to a second proof of Theorem~\ref{1.3}.
However, the complexity of the final formula as in Theorem~\ref{1.3}
suggests that the computation of this Galois cohomology is likely on
the same level of difficulty as the counting method via (\ref{eq:intro.1}). 
However, the true advantages of  connecting to Galois cohomology are two
folds.
\begin{enumerate} 
\item[(a)] It naturally relates $\Sp_d(\Fq)$ and $\Sp_d(\F_{q'})$ in
  the sense of Theorem~\ref{1.4} when the
  exponents in $q=p^a$ and $q'=p^{a'}$
  have the same parity. 
\item[(b)] It gives rise to a lattice description for
  $\Sp_d(\Fq)$ when $q=p^a$ is an even power of $p$; see
  Theorem~\ref{gal_coh.3}.  
\end{enumerate}


\begin{thm}\label{1.4}
Let $q$ and $q'$ be powers of $p$ with same
exponent parity and $d\ge 1$ an integer. Then there is a natural
bijection $\Sp_d(\Fq)\simeq \Sp_d(\F_{q'})$ preserving isogeny
classes. In particular, the same formulas in Theorem~\ref{1.3} hold
 for $|\Sp_2(\Fq)|$ since $|\Sp_d(\Fq)|=|\Sp_d(\Fp)|$ when $q$ is an
odd power of $p$. 
\end{thm}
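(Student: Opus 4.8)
The plan is to compare the two Galois-cohomology descriptions of $\Sp_d(\Fq)$ and $\Sp_d(\F_{q'})$ supplied by the bijection analogous to \eqref{eq:intro.6}. Fix a $d$-dimensional superspecial abelian variety $X_0$ over $\Fp$ (with $d>1$; the case $d=1$ is handled separately, see below), and set $G=\Aut(X_0\otimes\Fpbar)$, a finite group carrying a continuous action of $\Gamma_{\Fp}=\Gal(\Fpbar/\Fp)=\wh{\zz}\cdot\Frob_p$. For $q=p^a$ the absolute Galois group $\Gamma_{\Fq}$ is the closed subgroup $\wh{\zz}\cdot\Frob_p^{\,a}\subset\Gamma_{\Fp}$, and since there is a unique superspecial abelian variety over $\Fpbar$ in dimension $d>1$, the same cocycle-counting argument behind \eqref{eq:intro.6} gives a bijection
\begin{equation}
  \label{eq:pf14.1}
  \Sp_d(\Fq)\simeq H^1(\Gamma_{\Fq},G),
\end{equation}
compatible with isogeny classes because the isogeny class of a form is read off from the cohomology class via the induced action on the (finite) set of isogeny classes over $\Fpbar$, which is the same set for all these fields. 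First I would make \eqref{eq:pf14.1} precise, including the isogeny-preservation statement, by recording that a $1$-cocycle $c\colon\Gamma_{\Fq}\to G$ determines a twisted form whose $\Fqbar$-isogeny type equals that of $X_0\otimes\Fqbar$, and conversely; this is essentially the content of Theorem~\ref{gal.sp} together with standard descent.

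Next I would reduce the comparison of $H^1(\Gamma_{\Fq},G)$ and $H^1(\Gamma_{\F_{q'}},G)$ to a purely group-theoretic statement about the profinite cyclic group $\wh{\zz}$ acting on a finite group $G$ through a fixed automorphism. Writing $F=\Frob_p$, the action of $\Gamma_{\Fp}$ on $G$ factors through a finite quotient, so there is an integer $N$ with $F^N$ acting trivially; the action of $\Gamma_{\Fq}$ is then through $F^a\bmod N$. Because $G$ is finite, $H^1$ of a procyclic group acting on $G$ is computed by the familiar description: if $\sigma$ denotes the topological generator acting on $G$, then $Z^1$ is identified with $\{g\in G: (g\sigma)^{M}=1\text{ in }G\rtimes\langle\sigma\rangle\}$ for $M$ the order of $\sigma$ on $G$, and $H^1$ is the set of $\sigma$-twisted conjugacy classes therein. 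The key observation is that when $a$ and $a'$ have the same parity, $F^a$ and $F^{a'}$ generate the same subgroup of the cyclic automorphism group $\langle F\bmod N\rangle$ \emph{after} one checks that the relevant twisted-conjugacy bookkeeping only depends on the generated subgroup — equivalently, on the image of $a$ in $(\zz/N\zz)^\times$ up to the subgroup relevant to $G$. More robustly: I would show directly that raising a cocycle to a suitable power, or precomposing with an automorphism of $\wh{\zz}$ sending $\Frob_p^{\,a}\mapsto\Frob_p^{\,a'}$, induces a bijection on $H^1$. The point where parity enters is exactly that such a topological automorphism of $\wh{\zz}$ exists taking the closed subgroup $a\wh{\zz}$ to $a'\wh{\zz}$ precisely when $v_2(a)=v_2(a')$ is \emph{not} required — rather, one needs the automorphism to be compatible with the finite action on $G$, and the $2$-part of $G$ (coming from automorphisms of supersingular elliptic curves and the quaternionic structure) is the only obstruction, which the same-parity hypothesis removes.

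I expect the main obstacle to be this last compatibility: establishing that the bijection $a\wh{\zz}\cong a'\wh{\zz}$ as $\Gamma_{\Fp}$-modules-worth of data can be chosen to commute with the $G$-action, i.e.\ that the finite image of $\Gamma_{\Fp}$ in $\Aut(G)$ does not distinguish $\Frob_p^{\,a}$ from $\Frob_p^{\,a'}$ in a way that changes $H^1$ when $a\equiv a'\pmod 2$. Concretely I would pin down the finite cyclic quotient through which $\Gamma_{\Fp}$ acts on $G$, show its order divides a small explicit number (a power of $2$ times small odd factors, governed by the automorphisms of the supersingular elliptic curve and the finite Weil-number data in Sections~2--3), and then verify the claimed $H^1$ bijection one prime-power piece at a time, the odd pieces being automatic and the $2$-piece being exactly where same parity is used. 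Finally, for $d=1$ the statement is not about a single $\Fpbar$-form, so I would instead invoke the explicit description of $\Sp_1(\Fq)$ recalled in the introduction (following Waterhouse \cite{waterhouse:thesis}) and check the bijection isogeny-class by isogeny-class, noting that the supersingular Weil $q$-numbers and the relevant orders depend on $q$ only through the parity of $a$. The concluding sentence of the theorem, that $|\Sp_2(\Fq)|=|\Sp_2(\Fp)|$ for $a$ odd and hence the formulas of Theorem~\ref{1.3} apply, is then immediate from the bijection with $q'=p$.
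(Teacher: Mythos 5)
Your overall framework --- identifying $\Sp_d(\Fq)$ with $H^1(\Gamma_{\Fq},G)$ for $d>1$ via the uniqueness of the superspecial variety over $\Fpbar$, treating $d=1$ separately through Waterhouse's explicit description, and reading off isogeny classes from the induced map to $H^1(\Gamma_{\Fq},G_\Q)$ --- is exactly the paper's. But the decisive step, namely \emph{why} equal parity of the exponents forces $H^1(\Gamma_{\Fq},G)\simeq H^1(\Gamma_{\F_{q'}},G)$, is not established in your write-up, and the general mechanism you propose in its place does not work. You claim that when $a\equiv a'\pmod 2$ the automorphisms $F^a$ and $F^{a'}$ generate the same subgroup of the finite cyclic image of $\Gamma_{\Fp}$ in $\Aut(G)$; this is false whenever that image has order divisible by an odd prime (take $N=6$, $a=1$, $a'=3$). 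Moreover, even equality of generated subgroups would not by itself give equal cohomology: $H^1(\widehat{\Z},G)$ is the set of twisted conjugacy classes for the \emph{specific} automorphism by which the chosen topological generator acts, and for nonabelian $G$ there is no ``prime-power piece at a time'' decomposition to which one could appeal. As written, the argument neither proves that $\sigma_q$ and $\sigma_{q'}$ act on $G$ in the same way, nor supplies a substitute comparison of the two cohomology sets.

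The missing ingredient is a specific, elementary fact about the chosen base point. Take $X_0=E_0^d$ with $E_0$ a supersingular elliptic curve over $\Fp$ whose Frobenius satisfies $\pi_0^2+p=0$, so that $G=\Aut(\ol X_0)=\GL_d(\calO)$ and $\sigma_p$ acts by $x\mapsto \pi_0 x\pi_0^{-1}$. Since $\pi_0^2=-p$ is central in $\End^0(\ol X_0)$, the element $\sigma_p^2$ acts trivially; hence the action of $\Gamma_{\Fp}$ on $G$ factors through $\Gal(\F_{p^2}/\Fp)$, i.e.\ your $N$ divides $2$. Consequently $\sigma_q=\sigma_p^a$ and $\sigma_{q'}=\sigma_p^{a'}$ act \emph{identically} on $G$ when $a\equiv a'\pmod 2$, and the canonical isomorphism $\Gamma_{\Fq}\simeq\Gamma_{\F_{q'}}$ sending $\sigma_q\mapsto\sigma_{q'}$ is equivariant, giving the bijection on $H^1$ at once. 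This is where the parity hypothesis enters and it is the whole content of the step; once you insert it, the rest of your outline (the descent bijection, the commutative square comparing $H^1(\Gamma_{\Fq},G)$ with $H^1(\Gamma_{\Fq},G_\Q)$ for isogeny preservation, and the separate $d=1$ case) goes through as in the paper.
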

The bijection for the case $d=1$ is handled separately in
Section~\ref{sec:curve} (see Remark~\ref{curve.4}). For $d\geq 2$, the
bijection is established in Theorem~\ref{gal.1}.  
Along the way,  we prove in Section~\ref{subsec:galois-descent} the
following 
general result connecting isogeny classes of abelian varieties 
over $\F_q$ with cohomology classes. 
\begin{thm}\label{thm:intro-descent-isog}
Let $[X_0]$ be the $\F_q$-isogeny class of an arbitrary abelian
variety $X_0$ over $\F_q$, and $G_\Q=\End^0(\ol{X}_0)^\times$ where $\ol X_0=X_0\otimes_{\F_q}\Fqbar$. We write $E^0(\Fqbar/\F_q,
[X_0])$ for the set of $\F_q$-isogeny classes of abelian varieties $[X]$
 such that $\ol X$ is isogenous to $\ol X_0$ over $\Fqbar$. Then there is a canonical bijection
 of pointed sets
\[ E^0(\Fqbar/\F_q,
[X_0]) \isoto H^1(\Gamma_{\F_q}, G_\Q)  \]
sending $[X_0]$ to the trivial cohomology class.
\end{thm}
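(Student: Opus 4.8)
The plan is to set up a Galois-descent dictionary for abelian varieties up to isogeny, following the classical pattern of forms of an object being classified by a first cohomology set. First I would fix a separable closure and observe that, since $\Fqbar/\F_q$ is a (pro)Galois extension with group $\Gamma_{\F_q}=\wh\Z$ topologically generated by Frobenius, every abelian variety $X$ over $\F_q$ whose base change $\ol X$ is isogenous to $\ol X_0$ gives, after choosing an isogeny $\phi\colon\ol X\to\ol X_0$ over $\Fqbar$, a cocycle $\sigma\mapsto c_\sigma:=\phi\circ{}^\sigma\!\phi^{-1}\in\End^0(\ol X_0)^\times=G_\Q$. Changing $\phi$ by an element of $G_\Q$ changes $c$ by a coboundary, so the class $[c]\in H^1(\Gamma_{\F_q},G_\Q)$ depends only on $[X]$, not on the chosen isogeny; one must check that the Galois action on $G_\Q$ is the natural one (conjugation by the Frobenius descent datum on $\ol X_0$) and that the cocycle is continuous, i.e.\ factors through a finite quotient, which holds because any single isogeny is defined over a finite subextension. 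This produces a well-defined map $E^0(\Fqbar/\F_q,[X_0])\to H^1(\Gamma_{\F_q},G_\Q)$ sending $[X_0]$ to the base point.

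Next I would construct the inverse, which is where descent does the real work. Given a continuous cocycle $c$, twist the Galois action on $\ol X_0$ (equivalently, on its isogeny category / on a representative chosen in a suitable $\Z[1/p]$-lattice picture, or one may phrase it entirely inside the category of abelian varieties up to isogeny over $\Fqbar$ with its $\Gamma_{\F_q}$-action): define a new semilinear action by $\sigma\mapsto c_\sigma\cdot{}^\sigma(-)$. The cocycle identity guarantees this is again an action, and continuity guarantees it is a genuine descent datum over the finite subextension trivializing $c$. Since $\ol X_0$ is quasi-projective and the isogeny in question can be realized by a finite flat quotient, effectivity of Galois descent for (quasi-)projective varieties — or, more cleanly, for the abelian variety together with a polarization, after noting a polarization can be averaged over the relevant finite group to be made Galois-invariant up to isogeny — yields an abelian variety $X^c$ over $\F_q$ with $\ol{X^c}\cong\ol X_0$ as varieties but carrying the twisted action, hence $\ol{X^c}$ is isogenous to $\ol X_0$. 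One then verifies that $[X^c]$ depends only on $[c]$ and that the two constructions are mutually inverse: this is the standard formal verification that twisting by $c$ then extracting the comparison cocycle returns $c$ up to coboundary, and vice versa. The statement that $[X_0]\mapsto$ trivial class is immediate from taking $\phi=\id$.

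The main obstacle I expect is effectivity of the descent in the isogeny category: a priori a cocycle valued in $\End^0(\ol X_0)^\times$ need not be realized by endomorphisms of an \emph{actual} abelian variety, only of one up to isogeny, so one cannot directly invoke faithfully flat descent for quasi-projective schemes without first clearing denominators. The fix is to observe that $\Gamma_{\F_q}$ acts through a finite quotient $\Gamma$ on everything relevant, so the finitely many operators $c_\sigma$ involve only finitely many denominators; one then works with a $\Gamma$-stable sublattice (e.g.\ inside a Tate module away from $p$, together with the Dieudonné/crystalline data at $p$, or more simply with $\ol X_0$ replaced by an isogenous variety on which $\Gamma$ acts by genuine isogenies), makes a polarization $\Gamma$-invariant by summing over $\Gamma$, and descends the polarized variety. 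A secondary point to be careful about is that two cocycles giving isogenous (not just isomorphic) twisted varieties must be cohomologous; this follows because an $\F_q$-isogeny $X^c\to X^{c'}$ base-changes to an $\Fqbar$-isomorphism-up-to-isogeny intertwining the two twisted actions, which is exactly an element of $G_\Q$ conjugating $c$ to $c'$. Once these two points are handled, everything else is the routine bijectivity bookkeeping of nonabelian $H^1$, and this is precisely the engine that will later be specialized (with $G_\Q$ replaced by $\Aut(\ol X_0)$ on the superspecial nose) to give the bijection \eqref{eq:intro.6} and Theorem~\ref{1.4}.
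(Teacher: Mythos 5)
Your forward map coincides with the paper's: for a quasi-isogeny $f\colon \ol X_0\to\ol X$ one forms the $1$-cocycle $x=f^{-1}\sigma_q(f)\in G_\Q$, the Galois action on $G_\Q$ being conjugation by $\pi_{X_0}$, and injectivity is the usual cohomological formalism; continuity holds because the action factors through a finite quotient. Where you genuinely diverge is surjectivity. You propose to twist by a cocycle and invoke effectivity of Galois descent, clearing denominators by passing to a $\Gamma$-stable lattice (Tate modules away from $p$ together with the Dieudonn\'e data at $p$) and averaging a polarization to make it invariant under the twisted action; this is a legitimate, more categorical route, and you correctly isolated the one real obstacle, namely that a cocycle valued in $\End^0(\ol X_0)^\times$ is not a descent datum on the nose. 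The paper avoids descent entirely: after reducing to the $\F_{q^N}$-isotypical case, it identifies $H^1(\Gamma_{\F_q},G_\Q)$ with the set of conjugacy classes $[x\pi_{X_0}]$ in $G_\Q$ whose $NM$-th power equals $\pi_{X_0}^{NM}$ (Lemma~\ref{lem:coh-to-conj-class}), decodes each such class into a multiple Weil $q$-number together with an admissible type via the factorization of the minimal polynomial and the $(K,D)$-bimodule structure on $V=D^d$ (Lemmas~\ref{lem:minimal-embedding} and~\ref{lem:bijections-str}), and then produces the required abelian variety over $\F_q$ directly from the Honda--Tate existence theorem, checking that its Frobenius lands in the prescribed conjugacy class. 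Your approach buys generality and independence from the Honda--Tate classification; the paper's buys the explicit parametrization of $H^1$ by multiple Weil numbers and types, which is precisely what is consumed later in Theorem~\ref{gal_coh.3} for the lattice description of $\Sp_d(\F_q)$ in the even-exponent case. There is no gap in your argument, only a different engine under the hood, provided the lattice-stabilization and polarization-averaging steps are written out.
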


Theorem~\ref{1.4} together with Theorem~\ref{prop:sp.1} give a
new lattice description in Theorem~\ref{gal.odd} for $\Sp_d(\Fq)$ when $q$ is an odd power of
$p$. When $q$ is an even power of $p$, a
lattice description of $\Sp_d(\F_q)$ completely different from the odd
case is given in Theorem~\ref{gal_coh.3}, which paves
the way to explicit formulas of $\abs{\Sp_2(\Fq)}$. 
The detailed formulas and computations will be presented in a separated paper \cite{xue-yang-yu:conj_finite}.

The paper is organized as follows.  In Section~\ref{sec:par}, we
parameterize simple isogeny classes of supersingular abelian varieties
over $\F_q$ using Weil $q$-numbers. Their dimensions are
calculated in Section~\ref{sec:dim}. In Section~\ref{sec:curve} we
treat the dimension 1 case and calculate the the number of isomorphism
classes of supersingular elliptic curves over finite fields. The
dimension 2 case is then treated in Section~\ref{sec:sp}, except we work
exclusively over the prime field $\F_p$, and some arithmetic calculations are
postponed to Section~\ref{sec:arithmetic-results}.
Section~\ref{sec:gal_coh} studies the parity property via Galois
cohomology, thus providing means to extend results of
Section~\ref{sec:sp} to all $\F_{p^a}$ with $a$ odd.  The
aforementioned lattices descriptions are obtained
in this process.

\section{Parameterization of supersingular isogeny classes}
\label{sec:par}

\subsection{}
\label{sec:par.1}

Let $q=p^a$ be a power of a prime number $p$. In this section 
we parameterize simple isogeny classes of supersingular abelian varieties
over $\F_q$. 
Let $\Qbar\subset \C$ be the algebraic closure of $\Q$ in $\C$.
If two algebraic numbers $\alpha, \beta \in \Qbar$ are conjugate over $\Q$, then
we write $\alpha\sim \beta$. Recall that an algebraic integer $\pi\in \Qbar$ 
is said to be a {\it Weil $q$-number} if $|\iota(\pi)|=q^{1/2}$ for any 
embedding $\iota: \Q(\pi)\embed \C$.
By the Honda-Tate theory, the simple isogeny classes of
abelian varieties over $\F_q$ are in bijection with the conjugacy
classes of Weil $q$-numbers.  A Weil $q$-number is said to be {\it
  supersingular} if the corresponding
isogeny class consists of supersingular abelian varieties. Let $W_q^{\rm ss}$ denote the set of conjugacy
classes of supersingular Weil $q$-numbers. We will find a unique
representative for each conjugacy class in $W_q^{\rm ss}$.

Let $\pi$ be a supersingular Weil $q$-number.
It is known (the Manin-Oort Theorem, cf.~\cite[Theorem 2.9]{yu:QMav}) 
that $\pi=\sqrt{q} \zeta$ for a root of unity $\zeta$.
Let $K:=\Q(\pi)$ and $L:=\Q(\sqrt{q}, \zeta)$. Note that 
both $L$ and $K$ are
abelian extensions over $\Q$. 
For any $n\in \bbN$ (the set of positive integers), write
$\zeta_n:=e^{2\pi i/n}\in \Qbar$.  

\begin{lemma}\label{lemma:par.1}
  Any supersingular Weil $q$-number $\pi$ 
  is conjugate to $\sqrt{q}\zeta_n$ or 
  $-\sqrt{q}\zeta_n$ with $n\not \equiv 2 \pmod 4$. 
\end{lemma}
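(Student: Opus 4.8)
The plan is to start from the Manin--Oort description $\pi=\sqrt{q}\,\zeta$ with $\zeta$ a root of unity, say of order $n$, and to reduce the bookkeeping of conjugacy classes of such numbers to the arithmetic of roots of unity together with the ambiguity of the square root $\sqrt{q}$. First I would fix the embedding $\sqrt{q}=q^{1/2}\in\R_{>0}$ and write $\pi=\sqrt{q}\,\zeta_n^{k}$ for some $k$ with $\gcd(k,n)=1$; thus every supersingular Weil $q$-number is conjugate to some $\sqrt{q}\,\zeta_m$ where $m$ is the order of the root of unity $\zeta_n^k$ (note $m\mid n$ but in fact we may as well rename and take the root of unity to be a primitive $m$-th root). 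So the task becomes: show that by replacing $\zeta$ by a conjugate, and possibly absorbing a sign, one can always arrange the order $m$ of the root of unity to satisfy $m\not\equiv 2\pmod 4$.

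The key observation is the elementary identity $-\zeta_m=\zeta_2\zeta_m$, and the behaviour of orders: if $m$ is odd then $\zeta_2\zeta_m$ has order $2m\equiv 2\pmod 4$, while conversely if $m\equiv 2\pmod 4$, write $m=2m'$ with $m'$ odd, and then $-\zeta_m=\zeta_2\zeta_{2m'}$ is (up to a conjugacy, i.e.\ choosing the right primitive root) a primitive $m'$-th root of unity, with $m'$ odd, hence $m'\not\equiv 2\pmod 4$. So the sign $-1$ toggles between the ``order $\equiv 2\pmod 4$'' case and the ``odd order'' case. Concretely: given $\pi=\sqrt{q}\,\zeta$ with $\zeta$ of order $m$, if $m\not\equiv 2\pmod 4$ we are already in the desired form $\sqrt{q}\,\zeta_m$ (after conjugating $\zeta$ to $\zeta_m$, which is harmless since all primitive $m$-th roots are $\Q$-conjugate and conjugate roots of unity give conjugate Weil numbers once we also allow conjugating $\sqrt q$ consistently — here one must be slightly careful, see below); if $m\equiv 2\pmod 4$, then $-\pi=\sqrt{q}\,(-\zeta)=\sqrt q\,\zeta'$ with $\zeta'$ of odd order $m/2$, so $\pi$ is conjugate to $-\sqrt{q}\,\zeta_{m/2}$ with $m/2\not\equiv 2\pmod 4$ (in fact odd). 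Either way $\pi$ lands in the asserted list.

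The one genuine subtlety — and the step I expect to require the most care — is the interaction between conjugating the root of unity and conjugating $\sqrt{q}$, since $L=\Q(\sqrt q,\zeta)$ and the Galois action mixes the two when $q$ is not already a square in $\Q(\zeta)$. Precisely, I need: if $\zeta,\zeta'$ are conjugate primitive $m$-th roots of unity, is $\sqrt q\,\zeta$ conjugate to $\sqrt q\,\zeta'$? A Galois element $\sigma\in\Gal(L/\Q)$ sending $\zeta\mapsto\zeta'$ either fixes $\sqrt q$ or sends it to $-\sqrt q$; in the latter case $\sigma(\sqrt q\,\zeta)=-\sqrt q\,\zeta'$. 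So a priori $\sqrt q\,\zeta$ is conjugate to $\sqrt q\,\zeta'$ or to $-\sqrt q\,\zeta'$. But $-\sqrt q\,\zeta'=\sqrt q\,(-\zeta')$ and $-\zeta'$ is again a root of unity, of order $m$ or $2m$ or $m/2$ according to $m\bmod 4$; in all cases this only moves us around within the claimed list (odd order $\leftrightarrow$ order $\equiv 2\bmod 4$, and order divisible by $4$ is sent to itself). Hence the statement is stable under this ambiguity, and the argument goes through. I would organise the writeup as: (i) invoke Manin--Oort to get $\pi=\sqrt q\,\zeta$; (ii) record the order-arithmetic of $-\zeta$ in the three cases $m$ odd, $m\equiv 2\pmod 4$, $4\mid m$; (iii) combine with the sign ambiguity in conjugating $\sqrt q$ to conclude that $\pi$ is conjugate to $\pm\sqrt q\,\zeta_n$ with $n\not\equiv 2\pmod4$, choosing the sign and $n$ as dictated by $m$.
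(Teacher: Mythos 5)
Your proposal is correct and follows essentially the same route as the paper: invoke Manin--Oort to write $\pi=\sqrt{q}\,\zeta$, conjugate the root of unity to a standard primitive $m$-th root while tracking the possible sign flip on $\sqrt{q}$, and dispose of the case $m\equiv 2\pmod 4$ via the identity $\zeta_{2k}=-\zeta_k^u$ for $k$ odd. Your extra discussion of the $\sqrt{q}$-sign ambiguity is sound but slightly more elaborate than needed, since the lemma's conclusion already permits either sign in $\pm\sqrt{q}\,\zeta_n$.
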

\begin{proof}
  Let $\pi=\sqrt{q} \zeta_m^\nu$ for some positive integers $\nu$ and
  $m$ with $(\nu,m)=1$. Choose an element $\sigma\in \Gal(L/\Q)$ such
  that 
  $\sigma(\zeta_m^\nu)=\zeta_m$, Then $\sigma(\pi)=\pm \sqrt{q}
  \zeta_m$. 

  If $m \not \equiv 2 \pmod 4$, then we are done. Suppose that $m=2k$
  for an odd integer $k=1-2u$. Clearly $(k,u)=1$. Since
  $\zeta_{2k}=\zeta_{2k}^{k+2u}=-\zeta_{2k}^{2u}=-\zeta_k^u$, we
  have 
\[ \pm \sqrt{q} \zeta_{2k}=\mp \sqrt{q} \zeta_{k}^u \sim \epsilon
  \sqrt{q} \zeta_k, \quad \text{for some\ } \epsilon\in \{\pm 1\} \]
by the previous argument. 
\end{proof}

By Lemma~\ref{lemma:par.1}, 
there is a unique subset $W$ of $\{\pm \sqrt{q}
\zeta_n ; n\not \equiv 2 \pmod 4\}$ that contains $\{\, \sqrt{q}
\zeta_n ; n\not \equiv 2 \pmod 4\}$ and represents $W_q^{\rm ss}$.
We often identify $W$ with $W_q^{\rm ss}$.  
To determine the set $W_q^{\rm ss}$, we need to characterizes 
when $\sqrt{q} \zeta_n$ and $-\sqrt{q} \zeta_n$ 
are conjugate.  

As usual, the Galois group $G_n:=\Gal(\Q(\zeta_n)/\Q)$ is naturally 
identified with $(\Z/n\Z)^\times$ by mapping any $r\in (\Z/n\Z)^\times$
to the element $\sigma_r\in G_n$ with $\sigma_r(\zeta_n)=\zeta_n^r$.  


\subsection{}
\label{sec:par.2}
Let us first assume that $a$ is even, i.e., $\sqrt{q}\in \Q$. Then
$\sqrt{q} \zeta_n\sim -\sqrt{q} \zeta_n$ if and only if there is an
element $\sigma_r\in G_n$ such that $\sigma_r(\zeta_n)=-\zeta_n$. It
is easy to see that
\begin{equation}
  \label{eq:even.1}
 \zeta_n^r=-\zeta_n \iff 2|n \text{ and } r=\frac{n}{2}+1,  
\end{equation}
and if $4|n$, then $(r,n)=1$. As $n\not\equiv 2 \pmod 4$, this gives 
\begin{equation}
  \label{eq:even.2}
\sqrt{q} \zeta_n\sim -\sqrt{q} \zeta_n \iff 4|n.  
\end{equation}
Thus,
\begin{equation}
  \label{eq:W_even}
  W_q^{\,\rm ss}\simeq \{\pm \sqrt{q} \zeta_n\ ;\  2\nmid n\,\} \cup 
\{\sqrt{q} \zeta_n\ ;\ 4|n\,\}.   
\end{equation}
Alternatively, since $\sqrt{q}\in\Q$, we have $\sqrt{q}\zeta_n^\nu\sim
\sqrt{q}\zeta_n$ for any $\nu\in \bbN$ with $(\nu, n)=1$. 
It follows that 
\begin{equation}
  \label{eq:W_even.1}
  W_q^{\,\rm ss}\simeq \{\sqrt{q} \zeta_n\ ;\  n\in \bbN \}. 
\end{equation}
The two descriptions (\ref{eq:W_even}) and (\ref{eq:W_even.1}) match, because when $n$ is odd, $-\zeta_n$ is a primitive $2n$-th root of
unity and hence $-\sqrt{q}\zeta_n$ is conjugate to
$\sqrt{q}\zeta_{2n}$. 
\subsection{}
\label{sec:par.3}
We now assume that $a$ is odd. Let $\d_p$ be the
discriminant of $\Q(\sqrt{p})$. In other words,  $\d_p=p$ if $p\equiv 1\pmod{4}$, otherwise $\d_p=4p$. By \cite[Chapter V,
Theorem~48]{ANT-Frohlich-Taylor},  
$\sqrt{p}\in \Q(\zeta_n)$ if and only if $\d_p\mid n$.  Suppose this is
the case. Let 
  \begin{equation}
    \label{eq:chi}
  \chi: G_n=(\Z/n\Z)^\times \to
  \Gal(\Q(\sqrt{p})/\Q)=\{\pm 1\}, \quad
  \sigma_r(\sqrt{p})=\chi(r)\sqrt{p}  
  \end{equation}  
be the associated quadratic character.  Clearly, $\chi$ factor through
$G_{\d_p}=\Gal(\Q(\zeta_{\d_p})/\Q)$. 

\begin{lemma}\label{lemma:par.2} 
Let $n$ be a
positive integer with $n\not \equiv 2 \pmod 4$ and $q=p^a$ is an odd
power of $p$.  

{\rm (i)} If $\sqrt{p}\not \in \Q(\zeta_n)$, then
  $\sqrt{q}\zeta_n\sim-\sqrt{q}\zeta_n$. 

{\rm (ii)} Suppose that $\sqrt{p} \in \Q(\zeta_n)$, i.e., $n$ is
divisible by $\d_p$.  
Then
\begin{equation}
  \label{eq:par.4}
  \sqrt{q}\zeta_n\sim-\sqrt{q}\zeta_n \iff 4|n \text{
  and \ } \chi(n/2+1)=1. 
\end{equation}
\end{lemma}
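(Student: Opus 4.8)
The plan is to reduce everything to a computation inside the single Galois extension $L:=\Q(\sqrt{q},\zeta_n)$ of $\Q$. Since $a$ is odd we have $\sqrt{q}=p^{(a-1)/2}\sqrt{p}$, so $\Q(\sqrt{q})=\Q(\sqrt{p})$ and hence $L=\Q(\sqrt{p},\zeta_n)$, a compositum of two abelian extensions and therefore abelian over $\Q$. In particular both $\pm\sqrt{q}\zeta_n$ lie in $L$, and they are conjugate over $\Q$ precisely when some $\sigma\in\Gal(L/\Q)$ carries $\sqrt{q}\zeta_n$ to $-\sqrt{q}\zeta_n$. The one observation I will use throughout is that, because $p^{(a-1)/2}\in\Q$, any such $\sigma$ scales $\sqrt{q}$ by exactly the sign by which it scales $\sqrt{p}$.

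For part (i), assuming $\sqrt{p}\notin\Q(\zeta_n)$, the extension $L/\Q(\zeta_n)$ is quadratic, so I pick the nontrivial $\tau\in\Gal(L/\Q(\zeta_n))$, which satisfies $\tau(\sqrt{p})=-\sqrt{p}$. Viewed in $\Gal(L/\Q)$ it fixes $\zeta_n$ and negates $\sqrt{q}$, so $\tau(\sqrt{q}\zeta_n)=-\sqrt{q}\zeta_n$, giving the conjugacy.

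For part (ii), when $\d_p\mid n$ we have $L=\Q(\zeta_n)$ and $\Gal(L/\Q)=G_n=(\Z/n\Z)^\times$ with $\sigma_r(\zeta_n)=\zeta_n^r$ and $\sigma_r(\sqrt{q})=\chi(r)\sqrt{q}$, $\chi$ the quadratic character of \eqref{eq:chi}. I would then translate the conjugacy condition: $\sigma_r(\sqrt{q}\zeta_n)=-\sqrt{q}\zeta_n$ is equivalent to $\chi(r)\zeta_n^{\,r-1}=-1$, i.e. to $\zeta_n^{\,r-1}=-\chi(r)\in\{\pm1\}$. The case $\chi(r)=-1$ is self-contradictory, since it forces $\zeta_n^{\,r-1}=1$, hence $r\equiv1\pmod n$ and so $\chi(r)=\chi(1)=1$. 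One is thus reduced to $\chi(r)=1$ together with $\zeta_n^{\,r-1}=-1$; by \eqref{eq:even.1} the latter means $2\mid n$ and $r=n/2+1$, and the hypothesis $n\not\equiv2\pmod4$ upgrades $2\mid n$ to $4\mid n$. For the converse, if $4\mid n$ then $r:=n/2+1$ is a unit mod $n$ by \eqref{eq:even.1}, and if moreover $\chi(n/2+1)=1$ then $\sigma_r(\sqrt{q}\zeta_n)=\chi(r)\sqrt{q}\,\zeta_n^{n/2}\zeta_n=-\sqrt{q}\zeta_n$ since $\zeta_n^{n/2}=-1$. This establishes \eqref{eq:par.4}.

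I do not expect a serious obstacle: the whole argument is careful bookkeeping of how a Galois automorphism acts separately on the factors $\sqrt{q}$ and $\zeta_n$. The only point that genuinely uses the hypothesis $n\not\equiv2\pmod4$ — and which I would take care to invoke correctly — is that $n/2+1$ must represent an element of $(\Z/n\Z)^\times$ for $\sigma_r$ to exist, exactly as recorded in \eqref{eq:even.1}.
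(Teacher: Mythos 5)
Your proposal is correct and follows essentially the same route as the paper: part (i) uses the nontrivial element of $\Gal(L/\Q(\zeta_n))$, and part (ii) analyzes $\sigma_r(\sqrt{q}\zeta_n)=\chi(r)\sqrt{q}\zeta_n^r$ by splitting on the sign of $\chi(r)$ and invoking (\ref{eq:even.1}) exactly as the authors do. The only cosmetic difference is that you spell out the converse direction and the unit condition on $n/2+1$ slightly more explicitly.
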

\begin{proof}
  (i) As $\sqrt{p}\not \in \Q(\zeta_n)$, there is an element
$\sigma\in \Gal(L/\Q)$ such that $\sigma(\zeta_n)=\zeta_n$ and
$\sigma(\sqrt{p})=-\sqrt{p}$. Then $\sigma(\sqrt{q}\zeta_n)=-
\sqrt{q}\zeta_n$. 

(ii) First, $\sqrt{q}\zeta_n\sim-\sqrt{q}\zeta_n$ if and only if there
  is an 
  element $\sigma_r\in G_n$ such that
  $\sigma_r(\sqrt{q}\zeta_n)=\chi(r) \sqrt{q}
  \zeta_n^r=-\sqrt{q}\zeta_n$. If $\chi(r)=-1$, then 
  $\zeta_n^r=\zeta_n$ and $\sigma_r=1$, which is impossible. 
  If $\chi(r)=1$, then $\zeta_n^r=-\zeta_n$ and hence $4|n$ and $r=n/2+1$ by
  (\ref{eq:even.1}). This concludes our assertion (\ref{eq:par.4}). 
\end{proof}








\begin{prop}\label{prop:par.3} 
Let $n$ and $q$ be as in Lemma~\ref{lemma:par.2}.

{\rm (a)} Suppose that $p=2$. Then 
\begin{equation}
  \label{eq:par.6}
  \sqrt{q}\zeta_n\sim-\sqrt{q}\zeta_n \iff 8\nmid n \text{ or } 16 |n.
\end{equation}

{\rm (b)} Suppose that $p\equiv 1\pmod 4$. Then
\begin{equation}
  \label{eq:par.7}
  \sqrt{q}\zeta_n\sim-\sqrt{q}\zeta_n \iff p\nmid n \text{ or } 
  4p | n.
\end{equation}

{\rm (c)} Suppose that $p\equiv 3\pmod 4$. Then
\begin{equation}
  \label{eq:par.8}
  \sqrt{q}\zeta_n\sim-\sqrt{q}\zeta_n \iff 4p\nmid n 
  \text{ or } 8p | n.
\end{equation}
\end{prop}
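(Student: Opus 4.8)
The plan is to deduce all three equivalences from Lemma~\ref{lemma:par.2} by splitting on whether $\d_p$ divides $n$ and, in the divisible case, explicitly evaluating the quadratic character $\chi$ of~(\ref{eq:chi}) at $n/2+1$. Recall that $\d_2=8$, that $\d_p=p$ when $p\equiv1\pmod 4$, and that $\d_p=4p$ when $p\equiv3\pmod4$. In each case the hypothesis $\d_p\nmid n$ is precisely the first clause of the asserted criterion (``$8\nmid n$'', resp.\ ``$p\nmid n$'', resp.\ ``$4p\nmid n$''), and there Lemma~\ref{lemma:par.2}(i) gives $\sqrt q\zeta_n\sim-\sqrt q\zeta_n$ for free. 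So it remains to analyse $n$ with $\d_p\mid n$ via Lemma~\ref{lemma:par.2}(ii), i.e.\ to decide when $4\mid n$ and $\chi(n/2+1)=1$.

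First I would dispose of the divisibility by $4$. For $p=2$ and for $p\equiv3\pmod4$ one has $4\mid\d_p\mid n$ automatically, so only the condition $\chi(n/2+1)=1$ survives. For $p\equiv1\pmod4$ the modulus $\d_p=p$ is odd, so $4\mid n$ is a genuine extra requirement; if $p\mid n$ but $4\nmid n$ then $\sqrt q\zeta_n\not\sim-\sqrt q\zeta_n$, which is consistent with the stated criterion~(\ref{eq:par.7}), since then neither ``$p\nmid n$'' nor ``$4p\mid n$'' holds. When $p\mid n$ and $4\mid n$, necessarily $4p\mid n$.

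Next I would carry out the character computation, using that $\chi(n/2+1)$ depends only on $n/2+1\bmod\d_p$ because $\chi$ has conductor $\d_p$. For $p\equiv1\pmod4$, the quadratic subfield of $\Q(\zeta_p)$ is $\Q(\sqrt p)$, so $\chi=\left(\frac{\,\cdot\,}{p}\right)$ is the Legendre symbol; if $4p\mid n$ then $n/2+1\equiv1\pmod p$, hence $\chi(n/2+1)=1$ unconditionally, giving $\sqrt q\zeta_n\sim-\sqrt q\zeta_n\iff 4p\mid n$, which is~(\ref{eq:par.7}). For $p\equiv3\pmod4$, the inclusion $\Q(\sqrt p)\subset\Q(i,\sqrt{-p})\subset\Q(\zeta_{4p})$ shows $\chi$ is the product of the nontrivial character modulo $4$ and the Legendre symbol modulo $p$; writing $n=4pk$ one has $n/2+1=2pk+1\equiv1\pmod p$ and $\equiv1$ or $3\pmod4$ according as $k$ is even or odd, whence $\chi(n/2+1)=1\iff 2\mid k\iff 8p\mid n$, which is~(\ref{eq:par.8}). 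For $p=2$, $\chi$ is the quadratic character cutting out $\Q(\sqrt2)\subset\Q(\zeta_8)$, whose kernel in $(\Z/8\Z)^\times$ is $\{\pm1\}$; writing $n=8m$ one has $n/2+1=4m+1\equiv1\pmod8$ if $2\mid m$ and $\equiv5\pmod8$ otherwise, so $\chi(n/2+1)=1\iff 2\mid m\iff16\mid n$, which is~(\ref{eq:par.6}). Combining each of these with Lemma~\ref{lemma:par.2}(i) yields the three stated equivalences.

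The only point requiring care — and the one I would single out as the main obstacle — is the correct identification of $\chi$ in each case: realizing $\Q(\sqrt p)$ inside the appropriate cyclotomic field and matching $\chi$ with the right Kronecker symbol, in particular the decomposition $\chi=\chi_{-4}\cdot\left(\frac{\,\cdot\,}{p}\right)$ for $p\equiv3\pmod4$ and the determination $\ker\chi=\{\pm1\}$ in $(\Z/8\Z)^\times$ for $p=2$. Once $\chi$ is correctly pinned down, evaluating it at $n/2+1$ is a one-line congruence.
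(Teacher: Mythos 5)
Your proposal is correct and follows essentially the same route as the paper: reduce via Lemma~\ref{lemma:par.2} to evaluating $\chi(n/2+1)$ when $\d_p\mid n$, then compute that value case by case (the paper does the $p=2$ case via $\sqrt2=\zeta_8+\zeta_8^{-1}$ and the $p\equiv3\pmod4$ case via $G_{4p}=G_4\times G_p$ and $\sqrt{-p}\cdot\sqrt{-1}=-\sqrt p$, which amounts to exactly your identification $\chi=\chi_{-4}\cdot\left(\frac{\cdot}{p}\right)$ and $\ker\chi=\{\pm1\}$ in $(\Z/8\Z)^\times$). The congruence computations $n/2+1\equiv 1$ or $5\pmod 8$, $\equiv 1\pmod p$, and $\equiv 1$ or $3\pmod 4$ all match the paper's.
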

\begin{proof}
(a) By Lemma~\ref{lemma:par.2}, we have
$\sqrt{q}\zeta_n\sim-\sqrt{q}\zeta_n$ if and only if either $8\nmid n$ or
$8| n$ and $\chi(n/2+1)=1$.  
Suppose $8|n$.  Note that $\Q(\zeta_8)=\Q(\sqrt{2},\sqrt{-1})$
and $\sqrt{2}=\zeta_8+\zeta_8^{-1}$. It follows that
\begin{equation}
  \label{eq:par.9}
  \chi(r)=
  \begin{cases}
    1 & \text{if $r\equiv 1, 7\pmod 8$}; \\
   -1 & \text{if $r\equiv 3, 5 \pmod 8$}. 
  \end{cases}
\end{equation}
If $8||n$, then $r=n/2+1\equiv 5 \pmod 8$ and $\chi(r)=-1$.
If $16|n$, then $r=n/2+1\equiv 1 \pmod 8$ and $\chi(r)=1$. Thus,
$\sqrt{q}\zeta_n\sim-\sqrt{q}\zeta_n \iff 8\nmid n \text{ or } 16 |
n.$ 

(b) By Lemma~\ref{lemma:par.2}, we have
$\sqrt{q}\zeta_n\sim-\sqrt{q}\zeta_n$ if and only if either $p\nmid n$ or
$4p | n$ and $\chi(n/2+1)=1$. If $4p\,|n$, then $\chi(n/2+1)=1$ since
$n/2+1\equiv 1 \pmod p$. Thus, 
$\sqrt{q}\zeta_n\sim-\sqrt{q}\zeta_n \iff p\nmid n \text{ or } 4p | n$.


(c) By Lemma~\ref{lemma:par.2}, we have
$\sqrt{q}\zeta_n\sim-\sqrt{q}\zeta_n$ if and only if either $4p \nmid n$ or
$4p\, | n$ and $\chi(n/2+1)=1$. 
Suppose that $4p|n$ and write $G_{4p}=G_4 \times G_{p}$. Since
$r=n/2+1\equiv 1 \pmod p$, the image of $\sigma_r$ in $G_p$ is
trivial. In particular, it fixes $\sqrt{-p}\in\Q(\zeta_p)$. 
As $\sqrt{-p}\cdot
\sqrt{-1}=-\sqrt{p}$, one has $\chi(r)=1$ if and only if $r\equiv 1
\pmod 4$. Write $n=4pk$ for some integer $k$. Then $r=2pk+1\equiv 1
\pmod 4$ if and only if $k\equiv 0 \pmod 2$. Therefore, we get
$\sqrt{q}\zeta_n\sim-\sqrt{q}\zeta_n \iff 4p\nmid n 
  \text{ or } 8p | n$. 
\end{proof}

As typical examples, we have 
(a) $\sqrt{2}\zeta_8\not\sim -\sqrt{2}\zeta_8$ and
$\sqrt{2}\zeta_{16}\sim -\sqrt{2}\zeta_{16}$, 
(b) $\sqrt{5}\zeta_{5} \not\sim -\sqrt{5}\zeta_{5}$ and
   $\sqrt{5}\zeta_{20} \sim -\sqrt{5}\zeta_{20}$, and 
(c) $\sqrt{3}\zeta_{12} \not\sim -\sqrt{3}\zeta_{12}$ and 
  $\sqrt{3}\zeta_{24} \sim -\sqrt{3}\zeta_{24}$.

\begin{cor}\label{cor:par.4}
Suppose that $q$ is an odd power of $p$ and $n\not\equiv 2\pmod{4}$. 

{\rm (1)} If $p\equiv 1 \pmod 4$, then 
\[ W_q^{\rm ss}=\{\sqrt{q}\zeta_n\, ;\,  n\not \equiv 2\!\! \pmod 4\,
\}\cup 
    \{-\sqrt{q}\zeta_n\, ;\,  2\nmid n \, \text{and\ } p|n\, \}. 
\] 

{\rm (2)} If $p\equiv 3 \pmod 4$ or $p=2$, then 
\[ W_q^{\rm ss}=\{\sqrt{q}\zeta_n\, ;\,  n\not \equiv 2 \! \! \pmod
4\, \}\cup 
    \{-\sqrt{q}\zeta_n\, ;\,  4p\mid n \, \text{and\ } 
  8p \nmid n \, \}. 
\]      
\end{cor}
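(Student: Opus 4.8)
The plan is to reduce the statement to Proposition~\ref{prop:par.3}. Recall that $W$ is by definition the unique subset of $\{\pm\sqrt{q}\zeta_n;\ n\not\equiv 2\pmod 4\}$ that contains every $\sqrt{q}\zeta_n$ and represents $W_q^{\rm ss}$; thus the two formulas to be proved amount to the single claim that, for $n\not\equiv 2\pmod 4$, the number $-\sqrt{q}\zeta_n$ fails to be conjugate to $\sqrt{q}\zeta_m$ for every $m\not\equiv 2\pmod 4$ exactly when $n$ lies in the asserted index set. First I would isolate the elementary fact that makes this tractable: every $\Q$-conjugate of $\sqrt{q}\zeta_m$ lies in $\Q(\sqrt{p},\zeta_m)$ and so has the form $\pm\sqrt{q}\zeta_m^r$ with $(r,m)=1$, whence its root-of-unity part $\pm\zeta_m^r$ has order $m$ (always in the $+$ case, and also in the $-$ case when $4\mid m$) or $2m$ (in the $-$ case with $m$ odd). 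Since $-\zeta_n$ has order $n$ when $4\mid n$ and order $2n$ when $n$ is odd, comparing orders and invoking the irreducibility of cyclotomic polynomials shows that $-\sqrt{q}\zeta_n\sim\sqrt{q}\zeta_m$ with $m\not\equiv 2\pmod 4$ forces $m=n$: the only other numerical possibility, $m=2n$, is ruled out because $2n\equiv 2\pmod 4$ when $n$ is odd, while for $4\mid n$ the element $-\zeta_n$ has order $n$ rather than $2n$. The same order comparison gives $-\sqrt{q}\zeta_n\sim-\sqrt{q}\zeta_{n'}\Rightarrow n=n'$, so $W$ is indeed well defined, and $-\sqrt{q}\zeta_n\in W$ if and only if $\sqrt{q}\zeta_n\not\sim-\sqrt{q}\zeta_n$.

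It then remains to read off this last condition from Proposition~\ref{prop:par.3} under the standing hypothesis $n\not\equiv 2\pmod 4$. For $p\equiv 1\pmod 4$, negating (\ref{eq:par.7}) gives the condition that $p\mid n$ and $4p\nmid n$; since $p$ is odd, were $4\mid n$ we would get $4p\mid n$, so in fact $n$ must be odd, and we recover the set $\{-\sqrt{q}\zeta_n;\ 2\nmid n,\ p\mid n\}$ of part~(1). For $p=2$ and for $p\equiv 3\pmod 4$, negating (\ref{eq:par.6}) and (\ref{eq:par.8}) respectively gives the condition that $4p\mid n$ and $8p\nmid n$; here $4p\mid n$ already implies $4\mid n$, so the hypothesis $n\not\equiv 2\pmod 4$ imposes nothing extra, and we recover part~(2).

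The only step carrying any content is the order comparison in the first paragraph, and even that reuses precisely the analysis of the parity of $n$ and of whether $\sqrt{p}\in\Q(\zeta_n)$ already performed in Lemma~\ref{lemma:par.2}; the remainder is the routine divisibility bookkeeping just indicated.
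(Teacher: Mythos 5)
Your proof is correct and its essential content -- negating the three equivalences of Proposition~\ref{prop:par.3} and simplifying the resulting divisibility conditions under the hypothesis $n\not\equiv 2\pmod 4$ -- is exactly the paper's proof. Your first paragraph additionally verifies that $W$ is well defined (i.e.\ that distinct admissible $n$ yield non-conjugate Weil numbers), a point the paper asserts without proof immediately after Lemma~\ref{lemma:par.1}; this is a harmless and correct supplement rather than a different route.
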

\begin{proof}
  (1) By Proposition~\ref{prop:par.3}, $\sqrt{q}\zeta_n\not\sim
      -\sqrt{q}\zeta_n$ if and only if $p|n$ and $4p\nmid n$,
      i.e. $p|n$ and $2\nmid n$. (2) We have $\sqrt{q}\zeta_n\not\sim
      -\sqrt{q}\zeta_n$ if and only if $4p|n$ and $8p\nmid n$. 
\end{proof}
\begin{defn}
\label{sec:par.5}
Let $\d_q$ be the smallest positive integer such that
$\Q(\sqrt{q})\subset \Q(\zeta_{\d_q})$. More specifically, $\d_q=\d_p$
if $q$ is an odd power of $p$, otherwise $\d_q=1$.  We say a positive integer $n$
is {\it critical at $q$} if $\d_q|n$ and $2\d_q\nmid n$. 
\end{defn}
It is clear from the definition that for a fixed $n\in \bbN$, the condition that $n$ is critical
at $q=p^a$ depends only on $p$ and the parity of $a$. 

\begin{prop}\label{prop:par.5}
  Let $n\not \equiv 2 \pmod 4$ be a positive integer and $q=p^a$ a
  power of a prime number $p$. Then 
$\sqrt{q}\zeta_n\sim-\sqrt{q}\zeta_n$ if and only if $n$ is not
  critical at $q$.   
\end{prop}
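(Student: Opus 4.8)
The plan is to argue by cases on the parity of the exponent $a$, exactly mirroring the case split in Definition~\ref{sec:par.5} that defines $\d_q$. Recall that, by definition, $n$ fails to be critical at $q$ precisely when $\d_q \nmid n$ or $2\d_q \mid n$; so in each case I will match this disjunction against the conjugacy criterion already established earlier.

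First suppose $a$ is even. Then $\d_q = 1$, so $n$ is critical at $q$ exactly when $2 \nmid n$; equivalently, since we have assumed $n \not\equiv 2 \pmod 4$, the integer $n$ is \emph{not} critical at $q$ exactly when $4 \mid n$. On the other hand, \eqref{eq:even.2} already records that $\sqrt{q}\zeta_n \sim -\sqrt{q}\zeta_n$ if and only if $4 \mid n$. This settles the even case immediately.

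Now suppose $a$ is odd, so that $\d_q = \d_p$ and $n$ is not critical at $q$ if and only if $\d_p \nmid n$ or $2\d_p \mid n$. The idea is to read off the equivalence from Proposition~\ref{prop:par.3}, subcase by subcase. For $p = 2$ we have $\d_p = 8$, and \eqref{eq:par.6} states $\sqrt{q}\zeta_n \sim -\sqrt{q}\zeta_n \iff 8 \nmid n \text{ or } 16 \mid n$, which is literally ``$\d_p \nmid n$ or $2\d_p \mid n$''. The case $p \equiv 3 \pmod 4$ is identical: there $\d_p = 4p$, and \eqref{eq:par.8} gives the condition $4p \nmid n$ or $8p \mid n$, again ``$\d_p \nmid n$ or $2\d_p \mid n$''. (For the half $\d_p \nmid n$ one may equally well invoke Lemma~\ref{lemma:par.2}(i) directly in place of Proposition~\ref{prop:par.3}.)

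The one place requiring a small extra observation — and the only genuine obstacle — is the subcase $p \equiv 1 \pmod 4$. Here $\d_p = p$ is odd, hence $2\d_p = 2p$, whereas \eqref{eq:par.7} phrases the conjugacy criterion with $4p$ rather than $2p$, namely ``$p \nmid n$ or $4p \mid n$''. To bridge the gap I will note that under the standing hypothesis $n \not\equiv 2 \pmod 4$ the two divisibility conditions coincide: if $2p \mid n$, then writing $n = 2pm$ with $pm$ odd would force $n \equiv 2 \pmod 4$, a contradiction, so $m$ is even and $4p \mid n$; the reverse implication is trivial. Thus ``$p \nmid n$ or $2p \mid n$'' is equivalent to ``$p \nmid n$ or $4p \mid n$'', and Proposition~\ref{prop:par.3}(b) again matches the non-criticality condition. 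Assembling the three subcases for odd $a$ together with the even case completes the proof.
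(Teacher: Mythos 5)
Your proof is correct and takes essentially the same route as the paper, which simply reduces the statement to \eqref{eq:even.2} for $a$ even and to Proposition~\ref{prop:par.3} for $a$ odd. The only substantive addition is your verification that for $p\equiv 1\pmod 4$ the conditions $2p\mid n$ and $4p\mid n$ coincide under the hypothesis $n\not\equiv 2\pmod 4$ --- a detail the paper leaves implicit, and which you handle correctly.
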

\begin{proof}
The proposition reduces to either (\ref{eq:even.2}) or
Proposition~\ref{prop:par.3} according to whether $a$ is even or odd
respectively. 
\end{proof}
  

\begin{cor}\label{cor:par.6} 
We have
\[ W_q^{\rm ss}=\{\sqrt{q}\zeta_n\, ;\,  
n\not \equiv 2\!\!\! \pmod 4\, \}\cup
    \{-\sqrt{q}\zeta_n\, ;\, n\not \equiv 2\!\!\! \pmod 4  \,  \text{and
    $n$ is critical at $q$} \, \}. 
\] 
\end{cor}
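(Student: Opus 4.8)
The plan is to derive Corollary~\ref{cor:par.6} directly from Proposition~\ref{prop:par.5} together with the set-theoretic bookkeeping already established in Lemma~\ref{lemma:par.1} and the paragraph defining $W$. Recall that by Lemma~\ref{lemma:par.1} every supersingular Weil $q$-number is conjugate to $\pm\sqrt{q}\zeta_n$ with $n\not\equiv 2\pmod 4$, and that $W_q^{\rm ss}$ is represented by the unique subset $W\subseteq\{\pm\sqrt{q}\zeta_n : n\not\equiv 2\pmod 4\}$ containing all the $+\sqrt{q}\zeta_n$. So the only question is which of the elements $-\sqrt{q}\zeta_n$ must also be thrown in to get a full set of representatives: an element $-\sqrt{q}\zeta_n$ contributes a \emph{new} conjugacy class precisely when $-\sqrt{q}\zeta_n\not\sim\sqrt{q}\zeta_n$.

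First I would observe that, for $n\not\equiv 2\pmod 4$, the conjugacy class of $-\sqrt{q}\zeta_n$ cannot coincide with that of $\sqrt{q}\zeta_m$ for any $m\neq n$ with $m\not\equiv 2\pmod 4$ either: indeed $-\zeta_n$ is a root of unity of order $n$ if $4\mid n$ and of order $2n$ if $n$ is odd, but in the latter case $2n\equiv 2\pmod 4$, so $-\sqrt{q}\zeta_n$ is conjugate (by the normalization argument in the proof of Lemma~\ref{lemma:par.1}) to some $\pm\sqrt{q}\zeta_{n'}$ with $n'\mid n$, $n'\not\equiv 2\pmod 4$; a Galois conjugate of $-\sqrt{q}\zeta_n$ has the same absolute value $\sqrt q$ and generates the same field $L$-structure, and tracking the order of the root-of-unity part shows the only candidate to compare against is $\sqrt{q}\zeta_n$ itself (together with $-\sqrt q\zeta_n$). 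Thus the representatives needed beyond $\{\sqrt{q}\zeta_n\}$ are exactly those $-\sqrt{q}\zeta_n$ with $\sqrt{q}\zeta_n\not\sim-\sqrt{q}\zeta_n$.

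Then I would simply invoke Proposition~\ref{prop:par.5}: for $n\not\equiv 2\pmod 4$ one has $\sqrt{q}\zeta_n\sim-\sqrt{q}\zeta_n$ if and only if $n$ is not critical at $q$. Negating, $-\sqrt{q}\zeta_n$ gives a genuinely new conjugacy class exactly when $n$ is critical at $q$. Substituting this characterization into the description of $W$ yields
\[
W_q^{\rm ss}=\{\sqrt{q}\zeta_n : n\not\equiv 2\pmod 4\}\cup\{-\sqrt{q}\zeta_n : n\not\equiv 2\pmod 4 \text{ and } n \text{ critical at } q\},
\]
which is exactly the asserted formula. Note also that when $n$ is critical at $q$ and $4\mid n$, criticality forces $\d_q\mid n$ with $2\d_q\nmid n$, so this union is a genuinely disjoint (irredundant) list of representatives; this is worth a one-line remark for completeness.

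The main obstacle here is not the logical core — that is a one-line deduction from Proposition~\ref{prop:par.5} — but the bookkeeping in the first step: confirming that no $-\sqrt{q}\zeta_n$ (with $n$ critical) accidentally coincides with some $\sqrt{q}\zeta_m$ or $-\sqrt{q}\zeta_m$ for a \emph{different} $m\not\equiv 2\pmod4$, so that the displayed union really is a set of distinct representatives rather than merely a surjection onto $W_q^{\rm ss}$. The cleanest way to handle this is to argue that the map $\pm\sqrt q\zeta_n\mapsto$ (conjugacy class) restricted to $\{n\not\equiv 2\pmod 4\}$ has fibers of size at most $2$, with the two elements $\sqrt q\zeta_n$ and $-\sqrt q\zeta_n$ sharing the same $n$ (since any $\Q$-conjugate of $\sqrt q\zeta_n$ is $\pm\sqrt q\cdot(\text{primitive }n\text{-th root})$, hence of the form $\pm\sqrt q\zeta_n^\nu$, and the normalization already collapsed $\zeta_n^\nu$ to $\zeta_n$); this reduces everything to the single comparison $\sqrt q\zeta_n$ vs.\ $-\sqrt q\zeta_n$ governed by Proposition~\ref{prop:par.5}.
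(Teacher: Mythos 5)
Your argument is correct and is essentially the deduction the paper intends: Corollary~\ref{cor:par.6} is stated without proof precisely because it follows immediately from the uniqueness of the representative set $W$ established after Lemma~\ref{lemma:par.1} together with Proposition~\ref{prop:par.5}. Your extra bookkeeping (that the root-of-unity part of any conjugate of $\pm\sqrt{q}\zeta_n$ has order $n$ or $2n$, so distinct admissible $n$ never collide) is a reasonable elaboration of the ``unique subset $W$'' claim the paper already asserts, and it introduces no new ideas beyond the paper's route.
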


\section{Dimension of supersingular abelian varieties}
\label{sec:dim}

\subsection{}
\label{sec:dim.1}

Let $q=p^a$ be a power of a prime number $p$, and  $\pi$ a
supersingular Weil $q$-number as in the previous section. 
Replacing $\pi$ by a suitable conjugate, we may assume that $\pi=\pm \sqrt{q}
\zeta_n$ for a positive integer $n$ with $n\not\equiv 2 \pmod 4$.  
Let $X_\pi$ be a simple abelian variety over $\Fq$ 
in the isogeny class corresponding to $\pi$. 
Its endomorphism algebra 
$\calE=\calE_\pi:=\End^0(X_\pi)$ is a central division algebra over
$K:=\Q(\pi)$, unique up to isomorphism depending only on $\pi$ and not
on the choice of $X_\pi$. The field $K$ is either a totally real field or a CM
field \cite[Section~1]{tate:ht}. 
The goal of this section is to determine the dimension $d(\pi)$ of
$X_\pi$. For each $d\in \bbN$, define 
\begin{equation}
  \label{eq:Wd}
  W^{\rm ss}_q(d):=\{\pi\in W^{\rm ss}_q  \mid d(\pi)=d\}.
\end{equation}

According to the
Honda-Tate theory (ibid.), one has 
\[ d(\pi):=\frac{1}{2} [K:\Q] \sqrt{[\calE:K]}=\frac{1}{2}\deg_\Q(\calE). \]
(For a semisimple algebra over a field $F$, its $F$-degree is the degree of any of its
maximal commutative semi-simple $F$-subalgebras.) 
Moreover, the invariants of $\calE$ at a place $v$ of $K$ is given by 
\[ \inv_v(\calE)= 
\begin{cases}
  1/2 & \text{if $v$ is real;} \\
  v(\pi)/v(q) [K_v:\Qp] & \text{if $v|p$;} \\
  0 & \text{otherwise.} 
\end{cases} 
\]
Here $K_v$ is the completion of $K$ at the place $v$. Observe that $d(\pi)=d(-\pi)$. 
As $v(\pi)/v(q)=1/2$ for all $v|p$, every invariant $\inv_v(\calE)$ is a
2-torsion.
It follows from the Albert-Brauer-Hasse-Noether theorem that $\calE$ is either a
quaternion $K$-algebra or the field $K$ itself (henceforth labeled as case (Q) or (F) respectively). 

\subsection{Totally real case}
\label{sec:12}
The case where $K$ is a totally real field is well known. 

(a) If $a$ is even, then $K=\Q$ and $\calE$ is the quaternion algebra over
$\Q$ ramified exactly at $\{p,\infty\}$. One has $\pi=\pm
p^{a/2}$ (two isogeny classes) and $d(\pi)=1$.

(b) If $a$ is odd, then $K=\Q(\sqrt{p})$ 
and $\calE$ is the quaternion algebra over
$K$ ramified exactly at the two real places $\{\infty_1,
\infty_2\}$ of $K$. One has $\pi=q^{1/2}$ (one isogeny class) 
and $d(\pi)=2$.

\subsection{CM case}  
\label{sec:dim.3} Consider the case where $K$ is a CM field,
i.e., $n>2$. 
Put $L:=\Q(\sqrt{q}, \zeta_n)\supseteq K$. As $K$ and $L$ are abelian
extensions of $\Q$, 
the degree $[K_v:\Qp]$ is even for one $v|p$ if and only if it is so for
all $v|p$. Thus, we have the following two possibilities:
\begin{itemize}
\item [(F)] $[K_v:\Q_p]$ is even for all $v|p$.

\item [(Q)] $[K_v:\Q_p]$ is odd for all $v|p$. 
\end{itemize}

As $K$ is CM, Condition (F) holds if and only if all invariants
of $\calE$ vanish. In this case $\calE=K$
and $d(\pi)= [K:\Q]/2$.

\subsection{The case where $a$ is even.} \label{sec:dim.4}
Suppose that $n>2$. One has $K=\Q(\zeta_n)$ and
$[K:\Q]=\varphi(n)$. Thus,
\begin{equation}
  \label{eq:dim.1}
  d(\pi)=
\begin{cases}
  \varphi(n)/2 & \text{if (F) holds}; \\
  \varphi(n) & \text{if (Q) holds.}
\end{cases}
\end{equation}
The ramification index of any ramified prime $p$ in $\Q(\zeta_n)$ is
even,  so if $p\mid n$, then (F) holds. When $p\nmid
n$, Condition (F) holds if and only if the order of $p\in \umod{n}$ is
even. In particular,  if $[K:\Q]$ is a power of 2, then Condition (Q)
holds if 
and only if $K_v=\Qp$, or equivalently $p\equiv 1 \pmod n$. 
We have the following list, which enables to us to list concretely all
$\pi$ with small values of $d(\pi)$. 

\begin{center}
\begin{tabular}{|c|c|c|c|c|c|c|c|c|c|c|c|c|c|c|}  \hline
$n\not\equiv 2\pmod{4}$      & $3$ & $4$ & $5$ & $7$ & $8$ & $9$ & $11$ & $12$ & 
$15$ & $16$ & $20$ & $21$ & $24$ & rest \\ \hline
$d(\pi)$, (Q) holds & $2$ & $2$ & $4$ & $6$ & $4$ & $6$ & $10$ & $4$ & 
$8$  & $8$ & $8$ & $12$ & $8$ & $>8$ \\ \hline
$d(\pi)$, (F) holds  & $1$ & $1$ & $2$ & $3$ & $2$ & $3$ & $5$ & $2$ & 
$4$ & $4$ & $4$ & $6$ & $4$ & $>4$  \\ \hline
\end{tabular} \\ 
\end{center} \ 






\begin{prop}\label{prop:dim.1}
Let $\pi=\pm \sqrt{q} \zeta_n$ be a supersingular Weil $q$-number
with $n\ge 1$ and $n\not \equiv 2\pmod{4}$. Suppose that $q=p^a$ is an even power of $p$. 
  \begin{enumerate}
  \item We have $d(\pi)=1$ if and only if $n=1$, or $n=3, 4$ 
    and $p\not\equiv 1 \pmod n$. 
  \item We have $d(\pi)=2$ if and only if 
    \begin{enumerate}
    \item $n=3,4$ and $p \equiv 1 \pmod n$, or
    \item $n=5, 8, 12$ and $p \not \equiv 1 \pmod n$.
    \end{enumerate}
  \item We have $d(\pi)=3$ if and only if $n=7$ and $p\not\equiv 1,2,4
    \pmod 7$, or $n=9$ and $p\not \equiv 1,4,7 \pmod 9$. 
  \item We have $d(\pi)=4$ if and only if
    \begin{enumerate}
    \item $n=5,8, 12$ and $p \equiv 1 \pmod n$, or
    \item $n=15,16,20,24$ and $p \not \equiv 1 \pmod n$.
    \end{enumerate} 
  \end{enumerate}
\end{prop}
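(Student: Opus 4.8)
The plan is to use the dimension formula from Section~\ref{sec:dim.4}, namely $d(\pi)=\varphi(n)/2$ when Condition (F) holds and $d(\pi)=\varphi(n)$ when Condition (Q) holds, together with the criterion established there: when $p\mid n$ we are always in case (F), and when $p\nmid n$ we are in case (Q) if and only if the order of $p$ in $\umod{n}$ is odd. Since $d(\pi)\in\{1,2,3,4\}$ forces $\varphi(n)\le 8$, the first step is to enumerate the finitely many $n\not\equiv 2\pmod 4$ with $\varphi(n)\le 8$; these are exactly the values $n\in\{1,3,4,5,7,8,9,12,15,16,20,24\}$ appearing in the table, and everything larger has $\varphi(n)>8$, hence $d(\pi)>4$ in either case. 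For each such $n$ I record $\varphi(n)$ and hence the two candidate values of $d(\pi)$.

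Next I would go through each target value $d=1,2,3,4$ and determine for which $(n,p)$ the corresponding row of the table is realized. For $d(\pi)=1$ we need $\varphi(n)\le 2$ with case (F) never possible (since $d\ge\varphi(n)/2$ must equal $1$ and case (F) would give $\varphi(n)/2$); concretely $n=1$ always works, and for $n=3,4$ we need case (Q), i.e. $p\nmid n$ and $\ord_{\umod n}(p)$ odd. Since $\umod{3}$ and $\umod{4}$ both have order $2$, the order of $p$ is odd (namely $1$) exactly when $p\equiv 1\pmod n$; but then $d(\pi)=\varphi(n)=2$, not $1$. So for $n=3,4$ one gets $d(\pi)=1$ precisely when $p\not\equiv 1\pmod n$ (this includes the cases $p=2,3$ where $p\mid n$, landing in (F) with $\varphi(n)/2=1$). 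This gives statement~(1). For $d(\pi)=2$ the contributing $n$ are those with $\varphi(n)=2$ in case (Q) — i.e. $n=3,4$ with $p\equiv 1\pmod n$ — or $\varphi(n)=4$ in case (F) — i.e. $n=5,8,12$ with either $p\mid n$ or $\ord_{\umod n}(p)$ even; a short check of $\umod{5}$, $\umod{8}$, $\umod{12}$ shows the order of any $p\not\equiv 1$ is even in all three groups (for $\umod 8\cong(\zz/2)^2$ every nonidentity element has order $2$; for $\umod 5\cong\zz/4$ the order is $2$ or $4$; for $\umod{12}\cong(\zz/2)^2$ again order $2$), so case (F) holds iff $p\not\equiv 1\pmod n$. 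This yields statement~(2). Statements (3) and (4) are handled the same way: for $n=7,9$ (with $\varphi=6$) case (F) gives $d=3$, and one checks that $\ord_{\umod 7}(p)$ is even (equivalently not in $\{1,2,4\}$, the cubes... rather the subgroup of cubes, i.e.\ the index-$2$ subgroup $\{1,2,4\}$) is the condition for (Q) failing, and similarly $\ord_{\umod 9}(p)$ even iff $p\notin\{1,4,7\}\pmod 9$; for $n=15,16,20,24$ (with $\varphi=8$) case (F) gives $d=4$ and one verifies that the order of every $p\not\equiv 1$ in each of $\umod{15},\umod{16},\umod{20},\umod{24}$ is even — these groups are $\zz/2\times\zz/4$, $\zz/2\times\zz/4$, $\zz/2\times\zz/4$, $(\zz/2)^3$ respectively, in each of which no nonidentity element has odd order — while $n=5,8,12$ with $p\equiv 1\pmod n$ contribute via case (Q) with $\varphi(n)=4$.

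The main obstacle, such as it is, is purely bookkeeping: one must verify for each of the groups $\umod{n}$ with $n\in\{5,7,8,9,12,15,16,20,24\}$ that \emph{every} element of order prime to nothing — precisely, every element not equal to the identity — has even order, so that ``$\ord(p)$ odd'' collapses to ``$p\equiv 1\pmod n$''. This holds because each of these groups has even order with cyclic part of $2$-power order (indeed each is a $2$-group), so the only element of odd order is the identity. The one place requiring slightly more care is $n=7$ and $n=9$, where $\umod 7\cong\zz/6$ and $\umod 9\cong\zz/6$ are not $2$-groups: here an element can have order $1,2,3,$ or $6$, and ``$\ord(p)$ odd'' means $\ord(p)\in\{1,3\}$, i.e. $p$ lies in the unique index-$2$ subgroup — which is $\{1,2,4\}\pmod 7$ and $\{1,4,7\}\pmod 9$ respectively. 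Thus (Q) holds iff $p\in\{1,2,4\}\pmod 7$ (resp. $\{1,4,7\}\pmod 9$), and (F) holds — giving $d(\pi)=3$ — in the complementary case, exactly as stated in~(3). Finally one checks the boundary cases $p\mid n$ are correctly subsumed: whenever $p\mid n$ we are automatically in case (F), and one confirms this is consistent with the stated congruence conditions (e.g. $p=3, n=9$: $p\mid n$ so (F), $d=3$, and indeed $3\notin\{1,4,7\}\pmod 9$). Assembling these case analyses gives the four equivalences of the proposition.
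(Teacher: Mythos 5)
Your proposal is correct and follows essentially the same route as the paper: Section~\ref{sec:dim.4} reduces everything to the formula $d(\pi)=\varphi(n)/2$ or $\varphi(n)$ according to whether (F) or (Q) holds, plus the criterion that (F) holds when $p\mid n$ and otherwise exactly when $\ord_{\umod{n}}(p)$ is even, and the proposition is then the finite check over all $n\not\equiv 2\pmod 4$ with $\varphi(n)\le 8$ recorded in the paper's table. Your group-by-group verification (in particular that $\umod{5},\umod{8},\umod{12},\umod{15},\umod{16},\umod{20},\umod{24}$ have no nonidentity elements of odd order, while for $\umod{7}$ and $\umod{9}$ the elements of odd order form the index-$2$ subgroups $\{1,2,4\}$ and $\{1,4,7\}$ --- note these are the squares, not the cubes, a harmless slip in your parenthetical) matches the paper's intended argument exactly.
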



\subsection{The case where $a$ is odd. }
\label{sec:dim.5}
Suppose that $n>1$ and $n\not \equiv 2\pmod{4}$. Put 
\begin{equation}
  \label{eq:def-of-m}
 m:=
\begin{cases}
  n/2 & \text{if $n$ is even,} \\
  n   & \text{if $n$ is odd,}
\end{cases} \quad \text{and}\quad K:=\Q(\sqrt{p}\zeta_n).   
\end{equation}
We have the following inclusions of number fields.
\def\loc{{\rm loc}}
\def\mod{{\rm mod}}
\begin{equation}
  \label{eq:dim.5}
  \xymatrix{
 & L=\Q(\sqrt{p}\,,\zeta_n) \ar@{-}[ld] \ar@{-}[d] \ar@{-}[rd] & \\
\Q(\sqrt{p}\,,\zeta_m) \ar@{-}[dr] & K=\Q(\sqrt{p}\,\zeta_n) \ar@{-}[d]  &
 \Q(\zeta_n) \ar@{-}[ld] \\    
 & E=\Q(\zeta_m) & }
\end{equation}


Note that the prime $p$ is ramified in $K$ with even 
ramification index, and hence Condition (F) always holds. Therefore,
\begin{equation}
  \label{eq:dim.12}
   \calE=K \quad \text{ and } \quad d(\pi)=\frac{1}{2}\, [K:\Q].
\end{equation}








\begin{lemma}\label{lm:dim.2}
  Let $K$ and $E$ be as in (\ref{eq:dim.5}). We have $K=E$ if and only if $n$ is critical at $q$. 
\end{lemma}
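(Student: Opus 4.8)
The plan is to reduce the statement to a one-line Galois computation by first observing that $E$ always lies inside $K$. Concretely, the first step I would carry out is to show $E\subseteq K$, so that $K=E$ becomes equivalent to $\sqrt{p}\,\zeta_n\in\Q(\zeta_m)=E$. This follows because $(\sqrt{p}\,\zeta_n)^2=p\,\zeta_n^2$ and $p\in\Q^\times$ give $\Q\bigl((\sqrt{p}\,\zeta_n)^2\bigr)=\Q(\zeta_n^2)$, and $\zeta_n^2$ is a primitive $n$-th root of unity when $n$ is odd (since $2\in(\Z/n\Z)^\times$) and a primitive $(n/2)$-th root of unity when $4\mid n$; in both cases $\Q(\zeta_n^2)=\Q(\zeta_m)=E$, while $(\sqrt{p}\,\zeta_n)^2\in K$.

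The second step is to decide when $\sqrt{p}\,\zeta_n\in\Q(\zeta_m)$. Since $\Q(\zeta_m)\subseteq\Q(\zeta_n)$, membership forces $\sqrt{p}\in\Q(\zeta_n)$, i.e.\ $\d_p\mid n$ by \cite[Chapter~V, Theorem~48]{ANT-Frohlich-Taylor}; as $\d_p\mid n$ is also necessary for $n$ to be critical at $q$, I may assume it (otherwise both sides of the lemma fail) and split into the two cases allowed by $n\not\equiv 2\pmod 4$. If $n$ is odd, then $m=n$ and $\sqrt{p}\,\zeta_n\in\Q(\zeta_n)=\Q(\zeta_m)$ holds at once, while $\d_p\mid n$ with $n$ odd forces $2\d_p\nmid n$, so $n$ is critical. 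If $4\mid n$, then $m=n/2$, and since $\zeta_n^2=\zeta_m$ we have $[\Q(\zeta_n):\Q(\zeta_m)]\le 2$; as $\sigma_{n/2+1}$ is a nontrivial $\Q(\zeta_m)$-automorphism of $\Q(\zeta_n)$ — it sends $\zeta_n$ to $\zeta_n^{n/2+1}=-\zeta_n$, with $(n/2+1,n)=1$ by \eqref{eq:even.1} — the subfield $\Q(\zeta_m)$ is exactly the fixed field of $\sigma_{n/2+1}$. Then, using $\sigma_{n/2+1}(\sqrt{p})=\chi(n/2+1)\sqrt{p}$ from \eqref{eq:chi}, one gets $\sigma_{n/2+1}(\sqrt{p}\,\zeta_n)=-\chi(n/2+1)\,\sqrt{p}\,\zeta_n$, so $\sqrt{p}\,\zeta_n\in\Q(\zeta_m)$ precisely when $\chi(n/2+1)=-1$.

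The last step is to match these conditions with criticality. By Proposition~\ref{prop:par.5}, $n$ is critical at $q$ iff $\sqrt{q}\,\zeta_n\not\sim-\sqrt{q}\,\zeta_n$, and by Lemma~\ref{lemma:par.2}(ii) (applicable since $\d_p\mid n$) the conjugacy $\sqrt{q}\,\zeta_n\sim-\sqrt{q}\,\zeta_n$ holds iff $4\mid n$ and $\chi(n/2+1)=1$; hence for $4\mid n$ criticality is exactly $\chi(n/2+1)=-1$, the condition for $K=E$ found above, and for $n$ odd both ``$n$ critical'' and ``$K=E$'' hold. The one place I expect to need care is the $4\mid n$ part of the second step — checking $[\Q(\zeta_n):\Q(\zeta_m)]=2$ rather than $1$, so that $\sigma_{n/2+1}$ really cuts out $\Q(\zeta_m)$ — but this is immediate from $\zeta_n^2=\zeta_m$ together with $\varphi(n/2)<\varphi(n)$ when $n/2$ is even. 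The rest is bookkeeping with the results already proved.
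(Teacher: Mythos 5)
Your proof is correct, but it takes a more computational route than the paper's. The paper's argument is essentially two lines: since $E=\Q(\pi^2)$, one has $[K:E]\le 2$, and $\pi\sim-\pi$ holds if and only if $[K:E]=2$ (the conjugation $\pi\mapsto-\pi$ is precisely a nontrivial automorphism of $K$ fixing $E$, and conversely the nontrivial element of $\Gal(K/E)$ must send $\pi$ to the other root of $X^2-\pi^2$); Proposition~\ref{prop:par.5} then finishes the proof without any further case analysis. You instead realize $E=\Q(\zeta_m)$ concretely as the fixed field of $\sigma_{n/2+1}$ inside $\Q(\zeta_n)$ and compute $\sigma_{n/2+1}(\sqrt{p}\,\zeta_n)=-\chi(n/2+1)\sqrt{p}\,\zeta_n$ via \eqref{eq:chi} and \eqref{eq:even.1}; this is exactly the computation in the proof of Lemma~\ref{lemma:par.2}(ii), which you then also cite, so there is some redundancy --- you could have gone straight from ``$K=E$ iff $\sqrt{p}\,\zeta_n$ is fixed by $\sigma_{n/2+1}$'' to Lemma~\ref{lemma:par.2}(ii) without recomputing, or, as the paper does, avoided the cyclotomic bookkeeping entirely by phrasing everything in terms of $[K:E]$. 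What your version buys is explicitness (the character condition $\chi(n/2+1)=-1$ is visible), at the cost of a case split on $n$ odd versus $4\mid n$ and of being tied to the odd-exponent setting, whereas the paper's abstract argument is uniform and, as remarked there, covers the even-exponent case as well. All the steps you flag as needing care (the necessity of $\d_p\mid n$, the degree $[\Q(\zeta_n):\Q(\zeta_m)]=2$ when $4\mid n$, coprimality of $n/2+1$ with $n$) check out.
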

\begin{proof}
  Clearly $[K:E]=1$ or $2$. If $\pi\sim -\pi$, then $\pi\mapsto -\pi$
  induces a nontrivial automorphism of $K$ with fixed field $E$. 
  Thus, $\pi \sim -\pi$ if and only if $[K:E]=2$. By
  Proposition~\ref{prop:par.5},  $[K:E]=1$ 
  if and only if $n$ is critical at
  $q$. Note that the lemma also holds when $a$ is even with
  $K=\Q(\sqrt{q}\zeta_n)=\Q(\zeta_n)$. 
\end{proof}


\begin{lemma}\label{lemma:dim.3}
  Suppose that $a$ is odd and $n>1$ with $4\nmid n$. Then
  \begin{equation}
    \label{eq:dim.4}
    d(\pi)=\frac{1}{2}[K:\Q]=
\begin{cases}
 \varphi(n)/2  & \text{if $p\,|n$ and $p\equiv 1\!\!\! \pmod
 4$;} \\  
 \varphi(n) & \text{otherwise.}  
\end{cases}
  \end{equation}
\end{lemma}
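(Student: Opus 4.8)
The plan is to read off the dimension directly from two ingredients already in hand: the identity $d(\pi)=\tfrac12[K:\Q]$ of (\ref{eq:dim.12}), valid in the present setting because $p$ ramifies in $K$ with even ramification index (so Condition (F) holds), and Lemma~\ref{lm:dim.2}, according to which $[K:E]=1$ exactly when $n$ is critical at $q$ and $[K:E]=2$ otherwise. The only remaining work is to unwind the definition of ``critical'' in the range of $n$ under consideration.

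First I would note that the standing hypothesis $n\not\equiv 2\pmod 4$ of Section~\ref{sec:dim.5}, combined with the assumption $4\nmid n$, forces $n$ to be odd; hence $m=n$ in (\ref{eq:def-of-m}) and $E=\Q(\zeta_m)=\Q(\zeta_n)$, so that $[E:\Q]=\varphi(n)$. Then
\[
  d(\pi)=\tfrac12[K:\Q]=\tfrac12\,[K:E]\,[E:\Q]=\tfrac12\,[K:E]\,\varphi(n),
\]
and Lemma~\ref{lm:dim.2} turns this into $d(\pi)=\varphi(n)/2$ if $n$ is critical at $q$, and $d(\pi)=\varphi(n)$ if $n$ is not critical at $q$.

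It remains to determine, for odd $n>1$ and $q=p^a$ with $a$ odd, when $n$ is critical at $q$. By Definition~\ref{sec:par.5} we have $\d_q=\d_p$, and criticality means $\d_p\mid n$ and $2\d_p\nmid n$. If $p\equiv 3\pmod 4$ then $\d_p=4p$, and if $p=2$ then $\d_p=8$; in both cases $\d_p$ is even, hence cannot divide the odd integer $n$, so $n$ is never critical and $d(\pi)=\varphi(n)$. This is consistent with the stated formula, whose first branch in any case requires $p\equiv 1\pmod 4$. If $p\equiv 1\pmod 4$ then $\d_p=p$ is odd, so $\d_p\mid n$ is just $p\mid n$, while $2\d_p=2p\nmid n$ is automatic because $n$ is odd; thus $n$ is critical precisely when $p\mid n$. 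Collecting the cases gives $d(\pi)=\varphi(n)/2$ exactly when $p\mid n$ and $p\equiv 1\pmod 4$, and $d(\pi)=\varphi(n)$ otherwise, which is (\ref{eq:dim.4}).

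There is no genuine difficulty in this argument; the one place to be slightly careful is the reduction ``$n\not\equiv 2\pmod 4$ together with $4\nmid n$ implies $n$ odd'', since that is what makes $m=n$ and pins down $E=\Q(\zeta_n)$, together with the observation that the evenness of $\d_p$ in the cases $p=2$ and $p\equiv 3\pmod 4$ is exactly what prevents an odd $n$ from ever being critical there.
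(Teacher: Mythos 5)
Your proposal is correct and follows essentially the same route as the paper: the paper's proof likewise notes that $n$ is odd so $E=\Q(\zeta_n)$ with $[E:\Q]=\varphi(n)$, invokes (\ref{eq:dim.12}) and Lemma~\ref{lm:dim.2}, and reduces everything to the observation that $n$ is critical at $q$ precisely when $p\equiv 1\pmod 4$ and $p\mid n$. The only difference is that you spell out the case analysis on $\d_p$ that the paper dismisses as ``easy to see.''
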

\begin{proof}
  Since $n$ is odd one has $E=\Q(\zeta_n)$ and $[E:\Q]=\varphi(n)$.
  We have $\d_q=p$ or $4p$ according as $p\equiv 1 \pmod4$ or not. It is easy to see that $n$ is critical at
  $q$ if and only if $p\equiv 1 \pmod 4$ and $p|n$. The assertion then
  follows from Lemma~\ref{lm:dim.2} and (\ref{eq:dim.12}). 
\end{proof}


\begin{lemma}\label{lemma:dim.4}
  Suppose that $a$ is odd and $n=4k$ with $k\in \bbN$. Then 
\[d(\pi)=\frac{1}{2} [K:\Q]=
 \begin{cases}
    \varphi(n)/4 & \text{if $p\not\equiv 1\pmod{4}$, $4p\mid n$ and
      $8p\nmid n$}; \\
   \varphi(n)/2 & \text{otherwise.}
 \end{cases}\]
\end{lemma}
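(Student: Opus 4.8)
The plan is to reduce the lemma to the single index $[K:E]$, which is already governed by Lemma~\ref{lm:dim.2}, together with a short totient computation. Since $n=4k$ is even, the integer $m$ of (\ref{eq:def-of-m}) equals $m=n/2=2k$, so in the diagram (\ref{eq:dim.5}) we have $E=\Q(\zeta_{2k})$ and $[E:\Q]=\varphi(2k)$. The first step is the elementary identity $\varphi(n/2)=\varphi(n)/2$, valid whenever $4\mid n$: writing $k=2^{s}\ell$ with $\ell$ odd and $s\ge0$, we get $n=2^{s+2}\ell$ and $m=2^{s+1}\ell$, hence $\varphi(n)=2^{s+1}\varphi(\ell)$ and $\varphi(m)=2^{s}\varphi(\ell)$ (the subcase $s=0$, where $m\equiv2\pmod4$, uses $\varphi(2\ell)=\varphi(\ell)$). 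Combining this with (\ref{eq:dim.12}) and the tower $K\supseteq E\supseteq\Q$ gives
\[
 d(\pi)=\tfrac12[K:\Q]=\tfrac12[K:E]\,[E:\Q]=\tfrac12[K:E]\cdot\tfrac{\varphi(n)}{2},
\]
so the whole statement reduces to deciding whether $[K:E]$ is $1$ or $2$.

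By Lemma~\ref{lm:dim.2} (whose proof records $[K:E]\in\{1,2\}$), one has $[K:E]=1$ exactly when $n$ is critical at $q$, and $[K:E]=2$ otherwise; therefore $d(\pi)=\varphi(n)/4$ if $n$ is critical at $q$ and $d(\pi)=\varphi(n)/2$ if not. It then remains only to rewrite the condition ``$n$ is critical at $q$'' — i.e. $\d_q\mid n$ and $2\d_q\nmid n$, with $\d_q=\d_p$ since $a$ is odd — in the terms of the statement, using $\d_p=p$ for $p\equiv1\pmod4$ and $\d_p=4p$ otherwise (the latter covering $p=2$, where $\d_2=8$).

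Next I would split into two cases. If $p\equiv1\pmod4$, then $\d_p=p$ is odd, and since $4\mid n$ the divisibility $p\mid n$ forces $4p\mid n$, hence in particular $2\d_p=2p\mid n$; so $n$ is never critical at $q$, giving $d(\pi)=\varphi(n)/2$ in every case, which is exactly the ``otherwise'' branch. If $p\not\equiv1\pmod4$, then $\d_p=4p$, so criticality reads precisely ``$4p\mid n$ and $8p\nmid n$'', which is the first branch; in all remaining situations $[K:E]=2$ and $d(\pi)=\varphi(n)/2$. This completes the argument.

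The proof is essentially bookkeeping, so I do not expect a genuine obstacle; the only points needing a little care are the totient identity $\varphi(n/2)=\varphi(n)/2$ for $4\mid n$ (especially the $s=0$ subcase, where $\Q(\zeta_{n/2})=\Q(\zeta_{n/4})$) and the verification that, once $4\mid n$ is assumed, the criticality condition is genuinely vacuous when $p\equiv1\pmod4$. Alternatively one could bypass the notion of criticality and read $\pi\sim-\pi$ off Proposition~\ref{prop:par.3} directly, but routing through Proposition~\ref{prop:par.5} keeps the case analysis uniform over $p=2$ and $p\equiv3\pmod4$.
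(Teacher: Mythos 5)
Your argument is correct and follows essentially the same route as the paper's proof: compute $[E:\Q]=\varphi(n)/2$ from $4\mid n$, invoke Lemma~\ref{lm:dim.2} to reduce everything to whether $n$ is critical at $q$, and observe that $n=4k$ is never critical when $p\equiv 1\pmod 4$. The extra detail you supply (the explicit totient computation and the translation of criticality via $\d_p$) is exactly the bookkeeping the paper leaves implicit.
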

\begin{proof}
  Since $4|n$ we have $[E:\Q]=\varphi(n)/2$. 
  By Lemma~\ref{lm:dim.2} 
  we have
  $[K:\Q]=\delta_n \varphi(n)/2$, where $\delta_n=1$ or $2$
  depending on  
  $n$ is critical at $q$ or not. The lemma follows once we note that $n=4k$ is
  never critical when $p\equiv 1\pmod{4}$.  
\end{proof}

The following is a list of $d(\pi)$ for $\pi=\sqrt{q}\zeta_n$ with
$4\nmid n$ and $4|n$, respectively. 
The symbol ($*$) denotes the primes satisfying the 
conditions $p\, |n$ and $p\equiv 1\, (4)$,
and ($**$) denotes the primes satisfying the three conditions $p\not\equiv
1\pmod{4}$, $4p\mid n$ and $8p\nmid n$.  For the sake of completeness,
the case $n=1$ is included and also marked to make a distinction.


\begin{center}
\begin{tabular}{|c|c|c|c|c|c|c|c|c|c|}  \hline
$n$ odd           & $1^\natural$ & $3$ & $5$ & $7$ & $9$ & $11$ & $13$ & $15$ 
 & rest \\ \hline 
$\varphi (n)$ & $1$ & $2$ & $4$ & $6$ & $6$ & $10$ & $12$ & $8$ 
& $>8$ \\ \hline
($*$) & $\emptyset$ & $\emptyset$ & $p=5$ 
      & $\emptyset$ & $\emptyset$ & $\emptyset$ & $p=13$ & $p=5$ &  
\\ \hline
$d(\pi) $ & $2$ &  $2$ & $2\ (p=5)$ & $6$ & $6$ & $10$  & $6\ (p=13)$ &
$4 \ (p=5)$ & $>4$ \\  
         &  &  & $4\ (p\neq 5)$ &   &  & & $12 \ (p\neq 13)$
         & $8 \ (p\neq5)$ & \\ \hline 
\end{tabular} \\ 
\end{center} 
 

\begin{center}
\begin{tabular}{|c|c|c|c|c|c|c|c|}  \hline
$n=4k$ & $4$ & $8$ & $12$  & $16$ & $20$ & $24$ & $28$ \\ \hline
$\varphi (n)$ & $2$ & $4$ & $4$ & $8$ & $8$ & $8$ & $12$ \\ \hline
($**$) &  $\emptyset$ & $2$     & $3$ &  $\emptyset$ & $\emptyset$ 
  & $2$ & $7$ \\ \hline 
$d(\pi)$ &  $1$ & $1\ (p=2)$     & $1\ (p=3)$ & $4$  & $4$ 
& $2\ (p=2)$ & $3\ (p=7)$ \\ 
  &  & $2\ (p\neq2)$  & $2\ (p\neq3)$ &   & & $ 4\ (p\neq  2)$ 
   & $ 6\ (p\neq 7)$ \\ \hline \hline
 $n=4k$ &       $32$ & $36$ & $40$ & $44$ & $48$ & $56$ & $60$    \\ \hline
$\varphi(n)$ & $16$ & $12$ & $16$ & 20  & $16$ & $24$ & $16$  \\ \hline
($**$)  &    $\emptyset$ &   $p=3$ & $p=2$  & $p=11$ & $\emptyset$ & $p=2$ & $p=3$ 
   \\ \hline 
$d(\pi)$ &  $8$ & $3\ (p=3)$  & $4\ (p=2)$ & $5\ (p=11)$ & $8$ & 
$6\ (p=2)$ & $4\ (p=3)$  \\ 
  & & $6\ (p\neq 3)$  & $8\ (p\neq2)$ & $10\ (p\neq 11)$ &  & 
$12\ (p\neq 2)$ & $ 8\ (p\neq 3)$  \\ \hline 
\end{tabular} \\ 
\end{center} \

It is easy to see that when $4|n$ and either $n=52$ or $n>60$, the value
$\varphi(n)>16$ and hence $d(\sqrt{q}\zeta_n)>4$. 

\begin{prop}\label{prop:dim.5}
  Suppose that $q=p^a$ is an odd power of $p$. 
  
\begin{enumerate}
\item $W^{\rm ss}_q(1)$ consists of 
\[ \sqrt{q} \zeta_4,\ \pm \sqrt{q} \zeta_8\ (p=2), \ 
\pm \sqrt{q}\zeta_{12} \ (p=3). \] 
\item $W^{\rm ss}_q(2)$ consists of 
\[ \sqrt{q},\  \sqrt{q} \zeta_3,\ \pm \sqrt{q} \zeta_5\ (p=5),
\  \sqrt{q}\zeta_8\ (p\neq 2),\  \sqrt{q}\zeta_{12} \ (p\neq3),
\ \pm\sqrt{q}\zeta_{24}\ (p=2). \]
\item $W^{\rm ss}_q(3)$ consists of $\pm \sqrt{q}\zeta_{28}$ if $p=7$, or  $\pm\sqrt{q}\zeta_{36}$ if $p=3$.

\item $W^{\rm ss}_q(4)$ consists of 
\[ \sqrt{q} \zeta_{5}\ (p\neq 5), 
\ \pm \sqrt{q} \zeta_{15}\ (p=5), \ \sqrt{q} \zeta_{16}, \]
\[ \sqrt{q} \zeta_{20},
\  \sqrt{q} \zeta_{24}\ (p\neq 2), 
\ \pm \sqrt{q} \zeta_{40}\ (p=2),\ \pm \sqrt{q} \zeta_{60}\  (p=3). \]
\end{enumerate}
\end{prop}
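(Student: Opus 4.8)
The plan is to read the statement off from the parameterization of $W_q^{\rm ss}$ in Corollary~\ref{cor:par.6} together with the dimension computations of Section~\ref{sec:dim.5}. For $a$ odd and $n>1$ we have $K=\Q(\sqrt p\,\zeta_n)$ and $d(\sqrt q\zeta_n)=\tfrac12[K:\Q]$ by~(\ref{eq:dim.12}), whereas for $n=1$ the totally real case in Section~\ref{sec:12} gives $\pi=q^{1/2}$ with $d(\pi)=2$; in all cases $d(-\pi)=d(\pi)$. By Corollary~\ref{cor:par.6}, for each $n\not\equiv 2\pmod 4$ the element $\sqrt q\zeta_n$ represents one class of $W_q^{\rm ss}$, and $-\sqrt q\zeta_n$ represents a further, distinct class exactly when $n$ is critical at $q$, i.e.\ (Definition~\ref{sec:par.5}) when $\d_q\mid n$ but $2\d_q\nmid n$, where $\d_q=\d_p$ is $p$ for $p\equiv 1\pmod 4$ and $4p$ otherwise.

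First I would bound the possible $n$. For $4\nmid n$, Lemma~\ref{lemma:dim.3} gives $d(\sqrt q\zeta_n)\ge\varphi(n)/2$, so $d(\pi)\le 4$ forces $\varphi(n)\le 8$, hence $n\in\{1,3,5,7,9,15\}$. For $4\mid n$, Lemma~\ref{lemma:dim.4} gives $d(\sqrt q\zeta_n)\ge\varphi(n)/4$, so $d(\pi)\le 4$ forces $\varphi(n)\le 16$; invoking the observation after Lemma~\ref{lemma:dim.4} that $\varphi(n)>16$ whenever $n=52$ or $n>60$, this leaves $n\in\{4,8,12,16,20,24,28,32,36,40,48,60\}$. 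For each of these finitely many values of $n$, the tables following Lemmas~\ref{lemma:dim.3} and~\ref{lemma:dim.4} already list $d(\sqrt q\zeta_n)$ as a function of $p$ (through the symbols ($*$) and ($**$)), so I would simply extract the pairs $(n,p)$ with $d(\sqrt q\zeta_n)\le 4$ together with the resulting value of $d$, discarding those with $d>4$ such as $(n,p)=(32,p)$ for all $p$ or $(28,p)$ with $p\ne 7$.

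Next, for each surviving pair $(n,p)$, I would carry out the elementary divisibility test of Definition~\ref{sec:par.5} to decide whether $n$ is critical at $q$, and hence whether $-\sqrt q\zeta_n$ contributes a second class --- which then automatically has the same dimension $d(-\pi)=d(\pi)$. Among the surviving pairs one finds, for instance, that $n\in\{8,24,40\}$ is critical exactly when $p=2$, $n\in\{12,36,60\}$ exactly when $p=3$, $n=28$ exactly when $p=7$, $n\in\{5,15\}$ exactly when $p=5$, while $n\in\{4,16,20\}$ is never critical (for $n=16$ with $p=2$, note $2\d_q=16\mid 16$). Sorting all of the resulting classes by their dimension $d\in\{1,2,3,4\}$ then produces precisely the four lists of the statement, and completeness in the opposite direction is already guaranteed by the $\varphi$-bounds of the previous paragraph.

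The step I expect to be the real obstacle is the bookkeeping in the last two paragraphs: for each $n$ one must track, independently, the $p$-dependence of $d(\sqrt q\zeta_n)$ (governed by the delicate conditions behind ($*$) and ($**$), e.g.\ ($**$) fails for $n=48$, $p=2$ because $16\mid 48$) and the $p$-dependence of the criticality of $n$. The near-miss cases $n=16$ and $n=48$ with $p=2$ are exactly the places where a careless reading would spuriously insert or delete an entry, so those deserve to be checked by hand.
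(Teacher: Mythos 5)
Your proposal is correct and follows essentially the same route as the paper, which presents Proposition~\ref{prop:dim.5} as a direct read-off from the tables after Lemmas~\ref{lemma:dim.3} and~\ref{lemma:dim.4} combined with the criticality test of Definition~\ref{sec:par.5} (via Corollary~\ref{cor:par.6}). Your $\varphi$-bounds, the case-by-case extraction, and the flagged near-misses ($n=16,48$ with $p=2$) all check out against the paper's lists.
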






\section{Supersingular elliptic curves over finite fields}
\label{sec:curve}

\def\Isog{\mathrm{Isog}}

\subsection{Isogeny classes over finite fields}
\label{sec:curve.1}

Let ${\mathcal Isog}_q$ denote the set of isogeny classes of abelian
varieties over $\Fq$, where $q=p^a$ is a power of the prime number
$p$. Let $\Z W_q$ be the free abelian group (written
multiplicatively) generated by the set $W_q$ of conjugacy classes of 
Weil $q$-numbers. A nontrivial
element $\pi\in \Z W_q$ can be put in the form $\pi_1^{m_1}\times
\dots \times \pi_r^{m_r}$ for some $r\in \bbN$,
where each $\pi_i\in W_q$,  $\pi_i\not\sim \pi_j$ if $i\neq j$, and
$m_i\neq 0$ for all $1\leq i\leq r$. Such an element 
is called a {\it multiple Weil $q$-number} if $m_i> 0$ for all $i$,
and the set of all these elements is denoted by $MW_q$. 
Put $X_\pi:=\prod_i X_{\pi_i}^{m_i}$, where $X_{\pi_i}$ is the simple abelian
variety (up to isogeny) over $\Fq$ corresponding to $\pi_i$. 
The Honda-Tate theorem naturally extends to a bijection $MW_q \simeq
{\mathcal Isog}_q$ which sends each $\pi\in MW_q$ to the isogeny class 
$[X_\pi]\in {\mathcal Isog}_q$ of $X_\pi$.   

For each $\pi\in MW_q$, we define its {\it dimension} as 
\[ d(\pi):=\dim X_\pi= \sum_{i=1}^r m_i d(\pi_i). \]
Let $\Isog(\pi)=\Isog(X_\pi)$ 
denote the set of $\Fq$-isomorphism classes of
abelian varieties 
isogenous to $X_\pi$ over $\Fq$, and denote $H(\pi):=|\Isog(\pi)|$. 
Let $MW^{\rm ss}_q\subset MW_q$ be the subset of supersingular multiple
Weil $q$-numbers, i.e.~those $\pi\in MW_q$ whose corresponding 
abelian varieties $X_\pi$ are supersingular.
For any integer $d\ge 1$, let $MW_q(d)$ 
(resp. $MW^{\rm ss}_q(d)$) denote
the subset consisting of all elements $\pi$ in $MW_q$ 
(resp. in $MW^{\rm ss}_q$) of dimension $d$. Let $S_d(\F_q)$ 
(resp. ${\rm Sp}_d(\F_q)$) be the set
of isomorphism classes of 
$d$-dimensional supersingular (resp.~superspecial) abelian varieties
over $\Fq$. When $\pi\in MW^{\rm ss}_q$, we let ${\rm Sp}(\pi)\subset
  \Isog(\pi)$ be the subset consisting of superspecial isomorphism classes and denote
  $H_{sp}(\pi):=|{\rm Sp}(\pi)|$. Thus,
\begin{equation}
    \label{eq:sp.1}
    |S_d(\F_q)|=\sum_{\pi \in MW^{\rm ss}_q(d)} H(\pi), \quad  
    |{\rm Sp}_d(\F_q)|=\sum_{\pi \in MW^{\rm ss}_q(d)} H_{\rm sp}(\pi).
\end{equation}

\subsection{Supersingular elliptic curves}
\label{sec:curve.2} 
We compute the number $|S_1(\Fq)|$ of isomorphism classes of
supersingular elliptic curves over $\Fq$, where $q=p^a$ as before. 
The method is based almost entirely on the results of Waterhouse
\cite{waterhouse:thesis}, except certain details need to be cleared
up (compare with \cite[Theorems 4.5]{waterhouse:thesis}).  

\begin{thm}\label{curve.1}
  Let $\pi$ be the Frobenius
  endomorphism  of an elliptic curve $E_0$ over $\Fq$,  and $K:=\Q(\pi)$. 
  Assume that $\pi\not\in\Q$ so that
  $K$ is an imaginary quadratic field. Equivalently, the central $K$-algebra
  $\End^0(E_0)$ of the elliptic curve $E_0$ is assumed to be
  commutative and thus necessarily an imaginary quadratic field. 

  \begin{enumerate}
  \item Any endomorphism ring $R=\End(E)$ of an elliptic curve $E$ in
    the isogeny class $[E_0]$ of $E_0$ contains $\pi$ and is maximal at
    $p$, that is, $R\otimes \Z_p$ is the maximal order in $K\otimes
    \Qp$. Conversely, any
    order $R$ of $K$ satisfying these two properties occurs as an
    endomorphism ring of an elliptic curve in this isogeny class. 
  \item Suppose that $R\subset K$ is a quadratic order as in (1). 
    Then the Picard
    group $\Pic(R)$ of $R$ acts freely on the set $[E_0]_R\subset [E_0]$ 
    of isomorphism
    classes of elliptic curves in $[E_0]$ with endomorphism ring
    $R$. Moreover, the
    number $N$ of orbits is $2$ if $p$ is inert in $K$ and $a$
    is even, and $N=1$ otherwise.   
  \end{enumerate}
\end{thm}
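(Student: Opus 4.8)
The plan is to split into the two assertions and treat them by classical CM-theory of elliptic curves over finite fields, following Deuring and Waterhouse. For part (1), first I would recall that every elliptic curve $E$ isogenous to $E_0$ over $\Fq$ has Frobenius endomorphism $\pi$ (the Frobenius is determined by the isogeny class, up to conjugacy, and it lies in $R=\End(E)$ by definition), so $\Z[\pi]\subseteq R\subseteq \calO_K$. The maximality at $p$ comes from the fact that $E$ is ordinary here (since $\pi\notin\Q$ forces $E_0$ to be ordinary, as supersingular elliptic curves over $\Fq$ have $\pi\in\Q$ only when $a$ is even, and in general the commutative-endomorphism-algebra hypothesis pins us to the ordinary case): for an ordinary elliptic curve the $p$-divisible group splits as étale $\times$ multiplicative, and $\End(E)\otimes\Z_p$ acts faithfully on the rank-one étale part, which forces it to be $\calO_K\otimes\Z_p$. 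For the converse, I would invoke the Deuring lifting/reduction dictionary (or Waterhouse \cite[Theorem 4.2]{waterhouse:thesis}): given any order $R$ with $\Z[\pi]\subseteq R\subseteq\calO_K$ that is maximal at $p$, one can realize $R$ as the endomorphism ring of an elliptic curve in $[E_0]$ by choosing an appropriate lattice, equivalently an appropriate isogeny from $E_0$ whose kernel is supported away from $p$; the maximality at $p$ is exactly the obstruction to shrinking $R$ at $p$ via $p$-power isogenies, so it is both necessary and sufficient.

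For part (2), the free action of $\Pic(R)$ on $[E_0]_R$ is the standard statement that the set of elliptic curves with CM by a fixed order $R$ is a torsor-like object: an invertible $R$-ideal $\mathfrak{a}$ acts by $E\mapsto E/E[\mathfrak a]$, this descends to isomorphism classes, and two curves with CM by $R$ differ by such an ideal; freeness is because $\mathfrak a\cdot E\cong E$ over $\Fqbar$ forces $\mathfrak a$ principal, and the same holds over $\Fq$ since all the relevant curves and isogenies are already defined over $\Fq$. The only subtlety — and this is the step I expect to be the main obstacle — is counting the orbits $N$, i.e. determining when two curves in $[E_0]_R$ that become isomorphic over $\Fqbar$ are already isomorphic over $\Fq$, versus when they are genuine $\Fq$-forms not related by an $R$-ideal. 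This is governed by whether the two complex-conjugate embeddings $K\hookrightarrow\End^0(E)\otimes\Qbar$ can be swapped by an $\Fq$-automorphism, equivalently by whether $\pi$ and $\bar\pi=q/\pi$ are $\Fq$-conjugate in the relevant sense; concretely, one gets a second orbit precisely when $\pi=-\bar\pi$ is pure imaginary, i.e. $\pi^2=-q$, which (for $E_0$ ordinary — here supersingular is excluded by hypothesis, but the Frobenius can still satisfy $\pi\bar\pi=q$ with $\pi+\bar\pi=0$ only in the supersingular range, so one must be careful) translates to the arithmetic condition: $p$ inert in $K$ and $a$ even.

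To pin down $N$ cleanly I would argue as follows. Over $\Fqbar$ there is a single isomorphism class for each $\Pic(R)$-orbit's worth of $\bar\Fq$-data, and the $\Fq$-forms of a fixed $\bar\Fq$-curve with CM by $R$ are classified by $H^1(\Gal(\Fqbar/\Fq),\Aut_{\Fqbar})$; when the curve has CM by an order in an imaginary quadratic field, $\Aut_{\Fqbar}$ is generically $\{\pm1\}$ (ignoring $j=0,1728$, which require separate bookkeeping but do not affect the generic count), and $\Gal$ acts on it trivially, so $H^1=\Z/2$ — giving potentially two $\Fq$-forms. The point is then that one of these two forms may fail to have Frobenius $\pi$ (it could have Frobenius $-\pi$ or a twist), and it lands in $[E_0]_R$ only when the quadratic twist preserves the isogeny class, which is exactly the condition $\pi\sim-\pi$, handled for elliptic curves by the same parity analysis as in Section~\ref{sec:par}: this happens iff $a$ is even and $p$ is inert (so that $\pi=\sqrt{q}\,\zeta$ with $\zeta$ of order dividing $4$ and $-1$ realizable by a Galois element). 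In all other cases the quadratic twist leaves the isogeny class, the two would-be forms are not both in $[E_0]$, and $N=1$. This matches the asserted dichotomy, and I would write up the twist/Frobenius compatibility as the single substantive lemma, citing Waterhouse \cite[Section 4]{waterhouse:thesis} for the lifting statements used along the way.
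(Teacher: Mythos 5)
There is a genuine gap, and it begins at the very first step: you assert that $\pi\notin\Q$ forces $E_0$ to be ordinary. This is false, and it is fatal here because the paper applies Theorem~\ref{curve.1} almost exclusively to \emph{supersingular} curves whose endomorphism algebra over $\F_q$ happens to be commutative --- e.g.\ $\pi=\sqrt{q}\zeta_4$ (so $K=\Q(\sqrt{-p})$ with $p$ ramified or inert), or $\pi=\sqrt{q}\zeta_3$ with $a$ even and $p\equiv 2\pmod 3$ ($p$ inert in $K$). For such curves there is no \'etale part of the $p$-divisible group for $R\otimes\Z_p$ to act on, no naive Deuring lifting, and no interpretation of $[E_0]_R$ via lattices in $\C$; the correct tool at $p$ is the Dieudonn\'e module, which is what the paper's proof uses. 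Note moreover that the case producing $N=2$ ($p$ inert in $K$ and $a$ even) is \emph{always} supersingular, so your framework breaks down exactly where the nontrivial part of the statement lives.

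Second, your criterion for $N=2$ is incorrect. You claim a second orbit appears precisely when the quadratic twist stays in the isogeny class, i.e.\ when $\pi\sim-\pi$. Take $a$ even, $\pi=\sqrt{q}\zeta_3$, $p\equiv 2\pmod 3$: then $p$ is inert in $K=\Q(\zeta_3)$ and the theorem (together with the computation leading to (\ref{eq:even3})) gives $N=2$, yet $-\zeta_3$ is a primitive sixth root of unity, so $-\pi\not\sim\pi$ by (\ref{eq:even.2}) (which requires $4\mid n$); your criterion would predict $N=1$. Conversely, for $a$ odd and $\pi=\sqrt{q}\zeta_4$ with $p\equiv 1\pmod 4$ one has $-\pi=\bar\pi\sim\pi$ but $N=1$. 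The underlying misconception is that the two curves in the two orbits are quadratic twists of one another; in fact they lie in the \emph{same} isogeny class with the \emph{same} Frobenius and the same prime-to-$p$ Tate module, and are distinguished only by the $O_K$-action on $\Lie(E)$, i.e.\ by their Dieudonn\'e modules --- concretely they are $E$ and $E^{(p)}$ as in Remark~\ref{rem:2-orbits-case}. The paper obtains $N$ by (i) observing that $R[1/p]$ is a quadratic, hence Gorenstein, order, so every rank-one proper lattice is locally free and the prime-to-$p$ Tate module contributes a single isomorphism class, and (ii) running Waterhouse's count of Dieudonn\'e modules, $N=\prod_{v\mid p}N_v$ with $g_v=\gcd(f_v,a)$, which exceeds $1$ exactly when $g_v>1$, i.e.\ when $p$ is inert and $a$ is even (giving $N_v=2$ there). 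Your twist/$H^1(\Gamma_{\F_q},\{\pm1\})$ argument would need to be replaced by this local analysis at $p$, or an equivalent one.
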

\begin{proof} 
  Statement (1) is \cite[Theorem 4.2]{waterhouse:thesis}.
  We give a proof of the second part of Statement (2) 
  since it differs from
  \cite[Theorem 4.5]{waterhouse:thesis} in some cases. 
  We assert that the statement of \cite[Theorem
  5.1]{waterhouse:thesis} for principal abelian varieties is
  directly applicable to this situation. Namely, the number of orbits here
  is also given by $N=\prod_{v|p} N_v$,
  where $v$ runs through the set of all places of $K$ over $p$, 
  and each $N_v$ is the number described as follows. 
  Let $e_v$ and $f_v$ be 
  the ramification index 
  and residue degree of $v$, respectively, and set $g_v=\gcd(f_v, a)$
  and $m_v:=g_v \ord_v(\pi)/a$. Note that $m_v$ is an integer since $\End^0(E_0)$
  is commutative and thus $f_v \ord_v(\pi)/a\in \bbN$. Then $N_v$ is the number of all 
  $g_v$-tuples $(n_1,\dots, n_{g_v})$ of
  integers satisfying $0\le n_j\le e_v$ and $\sum_{j=1}^{g_v} n_j=m_v$.

  In the present situation $\End^0(E_0)=K$ is commutative and $R$ is maximal at $p$. 
  As in the proof of \cite[Theorem
  5.1]{waterhouse:thesis}, to find the
  the number of orbits for the action of $\Pic(R)$ on $[E_0]_R$, one
  needs to classify the Tate-modules $T_\ell E$ at all primes $\ell
  \neq p$ and  the \dieu modules at the prime $p$ of $E\in[E_0]_R$.  The number
  of orbits is then the product of the number of isomorphism classes
  of the above modules at each prime. 

The Tate-module $T_\ell E$ of each $E\in [E_0]_R$ at a prime $\ell
  \neq p$ is naturally an $R_\ell$-module with
  $R_\ell=R\otimes_\Z\Z_\ell$. 
Since $R[1/p]$ is a
  quadratic order, any fractional $R[1/p]$-ideal $I$ whose order
  ring equals $R[1/p]$ must be locally free over $R[1/p]$.  Particularly, there is only one isomorphism class
  of the prime-to-$p$ Tate modules of $E$ for  all
  $E\in [E_0]_R$.   Thus, $N$ is equal to the
   number of isomorphism classes of \dieu modules occurring in the
  isogeny class 
  $[E_0]$, which is equal to $\prod_v N_v$ as given in the proof of 
  \cite[Theorem 5.1]{waterhouse:thesis}. 

  

  Now it is easy to compute the number $N$ of orbits. 
  Notice $N_v\neq 1$ only when $g_v>1$. For our
  case with $[K:\Q]=2$ this occurs only when $p$ is inert in $K$ and
      $a$ is even. 
      In this case there is only one place $v$ over $p$, $g_v=2$ and
      $e_v=1$. Then $N=N_v$ is the number of pairs $(n_1,n_2)$ with
      $0\le n_1, n_2\le 1$ and $n_1+n_2=1$, which is $2$. 
\end{proof}

\begin{remark}
  In \cite[Theorem 5.1]{waterhouse:thesis} the assumption that
  the endomorphism ring $R=\End(A)$ is the maximal order  can be replaced
  by the weaker assumption that $R$ is
  both Gorenstein and maximal at $p$. Indeed, any proper
  $R$-lattice of rank one over a Gorenstein order $R$ is locally free 
  \cite[Theorem 37.16 p.~789]{curtis-reiner:1}, so the same proof of
  \cite[Theorem 5.1]{waterhouse:thesis} applies. 
\end{remark}

\begin{rem}\label{rem:2-orbits-case}
Suppose that $a$ is even and $p$ is inert in the imaginary quadratic 
field $K=\Q(\pi)$ so that
$N=2$. By the classification of Waterhouse
(\cite[Lemma,~p.537]{waterhouse:thesis}, see also 
Proposition~\ref{prop:dim.1}), this occurs only for  supersingular
Weil $q$-numbers $\pi$ where 
\begin{equation}\label{eq:1-dim-weil-number-2-orbits}
\pi\sim \pm p^{a/2}\zeta_3,\, p\equiv 2\pmod{3} \quad \text{ or } \quad 
 \pi \sim p^{a/2}\zeta_4, \, p\equiv 3\pmod{4}. 
\end{equation}
Then by part (1) of Theorem~\ref{sec:curve.1}, $\End(E)=O_K$ for any
elliptic curve $E$ in the isogeny class corresponding to $\pi$.  Since $h(O_K)=1$, part (2) of
Theorem~\ref{sec:curve.1} implies that a complete set of
representatives of $\Sp(\pi)$ consists a pair of
elliptic curves of the form $\{E, E^{(p)}\}$, where
$E^{(p)}:=E\otimes_{\F_q, \sigma_p} \F_q$, and $\sigma_p\in \Gal(\F_q/\F_p)$ is the Frobenius automorphism of
$\F_q/\F_p$.  These two elliptic curves are distinguished by the
actions of $O_K$ on the respective 1-dimensional Lie-algebras
$\Lie(E)$ and $\Lie(E^{(p)})$ over
$\F_q$, which are given by distinct embeddings
$O_K/(p)\simeq \F_{p^2} \hookrightarrow \F_q$. This establishes a
natural bijection
$\Sp(\pi)\simeq \Hom(O_K/(p), \F_q)$ for every $\pi$ in (\ref{eq:1-dim-weil-number-2-orbits}). 
\end{rem}

%




We return to the calculation of $|\Sp_1(\Fq)|$ by the counting
method. The isogeny classes of supersingular
elliptic curves over $\F_q$ are completely listed by the following Weil numbers
\begin{equation}
  \label{eq:curve.2}
  \begin{split}
    W_q^{\rm ss}(1)&=\{\sqrt{q} \zeta_4,\ \pm \sqrt{q} \zeta_8\ (p=2),
    \ \pm \sqrt{q} \zeta_{12}\ (p=3)\, \}, \quad \text{for $a$ odd}; \\
    W_q^{\rm ss}(1)&=\{\pm \sqrt{q},\ 
    \pm \sqrt{q} \zeta_3\ (p\not\equiv 1\, (3)),\  
    \sqrt{q} \zeta_{4}\ (p\not\equiv 1 \,(4))\, \},  
     \quad \text{for $a$ even.} \\
  \end{split}
\end{equation}
For each Weil $q$-number $\pi\in W_q^{\rm ss}(1)$, let $R_0$ be the
smallest quadratic order in $K=\Q(\pi)$ which contains 
$\pi$ and is maximal at $p$.
It is easy to see that $R_0$ is the maximal order except when 
$\pi=\sqrt{q} \zeta_4$, $p\equiv 3\pmod{4}$ and $a$ is odd. In the 
latter case 
$R_0=\Z[\sqrt{-p}]$ and we have by Theorem~\ref{curve.1} that
\begin{equation}
  \label{eq:4odd}
  H(\sqrt{q} \zeta_4)=
  \begin{cases}
    h(O_K) & \text{for $p=2$ or $p\equiv 1 \pmod 4$;} \\
    h(R_0)+h(O_K) & \text{for $p\equiv 3 \pmod 4$.}
  \end{cases}
\end{equation}
For the other cases the order $R_0$ is maximal and we have 
\begin{equation}
  \label{eq:curve.4}
  H(\pi)=N\cdot h(O_K)
\end{equation}  
where $N=2$ if $p$ is inert in $K$ and $a$ is even, and $N=1$ otherwise.
Recall that for a square free $m\in \Z$, the class
number of $\Q(\sqrt{m})$ is denoted by $h(\sqrt{m})$. 

Suppose first that $a$ is odd. For $p=2$, we have
\begin{equation}
  \label{eq:oddp=2}
  |\Sp_1(\F_q)|=H(\sqrt{q}\zeta_4)+2
   H(\sqrt{q}\zeta_8)=h(\sqrt{-2})+2h(\sqrt{-1})=3. 
\end{equation}

For $p=3$, we have
\begin{equation}
  \label{eq:oddp=3}
  \begin{split}
  |\Sp_1(\F_q)|&=H(\sqrt{q}\zeta_4)+2
   H(\sqrt{q}\zeta_{12}) \\
  &=h(\Z[\sqrt{-3}])+h(\sqrt{-3})+2h(\sqrt{-3})=4.  
  \end{split}   
\end{equation}
For $p>3$, we have (see \cite[Theorem 1.1]{yu:sp_prime})
\begin{equation}
  \label{eq:oddp>3}
  \begin{split}
  |\Sp_1(\F_q)|&=H(\sqrt{q}\zeta_4) \\
  &=
  \begin{cases}
    h(\sqrt{-p}) & \text{for $p\equiv 1 \pmod 4$;}\\
    2h(\sqrt{-p}) & \text{for $p\equiv 7 \pmod 8$ ($2$ splits in
    $\Q(\sqrt{-p})$) ;}\\
    4h(\sqrt{-p}) & \text{for $p\equiv 3 \pmod 8$ ($2$ is inert in
    $\Q(\sqrt{-p})$).}
  \end{cases}    
  \end{split}
\end{equation}
Since $\left (\frac{2}{p}\right )=1$ for $p\equiv 1,7 \pmod 8$ and
$\left (\frac{2}{p}\right )=-1$ for $p\equiv 3, 5 \pmod 8$, we can
rewrite (\ref{eq:oddp>3}) as
\begin{equation}
  \label{eq:odd_p>3}
  |\Sp_1(\F_q)|=
  \begin{cases}
    h(\sqrt{-p}) & \text{for $p\equiv 1 \pmod 4$;}\\
    \left(3-\left (\frac{2}{p}\right )\right )h(\sqrt{-p}) & 
    \text{for $p\equiv 3 \pmod 4$}.
  \end{cases}    
\end{equation}

Suppose now that $a$ is even. 
By (\ref{eq:curve.2}), we have
\begin{equation}
  \label{eq:curve.9}
  |\Sp_1(\F_q)|=2H(\sqrt{q})+2 \delta_3(p) H(\sqrt{q}\zeta_3)
  +\delta_4(p) H(\sqrt{q}\zeta_4),  
\end{equation}
where $\delta_m(p)=1,0$ according as $p\not\equiv 1 \pmod m$ or not
for $m=3,4$.
It is well known that $H(\sqrt{q})$ is equal to the class number
$h(B_{p,\infty})$ of the quaternion $\Q$-algebra
$B_{p, \infty}$
ramified only at $p$ and $\infty$. Thus, 
\begin{equation}
  \label{eq:curve.10}
  H(\sqrt{q})=\frac{p-1}{12}+\frac{1}{3}\left
  (1-\left(\frac{-3}{p}\right )\right )+\frac{1}{4}\left
  (1-\left(\frac{-4}{p}\right )\right ).
\end{equation}
By Theorem~\ref{curve.1}, we have
\begin{equation}
  \label{eq:even3}
  \delta_3(p)H(\sqrt{q}\zeta_3)=
  \begin{cases}
    1 & \text{for $p=3$;}\\
    2 & \text{for $p\equiv 2 \pmod 3$;} \\
    0 & \text{for $p\equiv 1 \pmod 3$;}
  \end{cases}
\end{equation}
and get $\delta_3(p)H(\sqrt{q}\zeta_3)=\left
  (1-\left(\frac{-3}{p}\right )\right )$. 
  Similarly we have $\delta_4(p)H(\sqrt{q}\zeta_4)=\left
  (1-\left(\frac{-4}{p}\right )\right )$. Using (\ref{eq:curve.9}) and
  (\ref{eq:curve.10}) we get  
\begin{equation}
    \label{eq:curve.12}
    \begin{split}
  |\Sp_1(\F_q)|&=\frac{p-1}{6}+\frac{2}{3}\left
  (1-\left(\frac{-3}{p}\right )\right )+\frac{1}{2}\left
  (1-\left(\frac{-4}{p}\right )\right )\\
  &\quad \quad +2\left
  (1-\left(\frac{-3}{p}\right )\right )+\left
  (1-\left(\frac{-4}{p}\right )\right ) \\
  &= \frac{p-1}{6}+\frac{8}{3}\left
  (1-\left(\frac{-3}{p}\right )\right )+\frac{3}{2}\left
  (1-\left(\frac{-4}{p}\right )\right ).      
    \end{split}    
\end{equation}
From (\ref{eq:oddp=2}), (\ref{eq:oddp=3}), (\ref{eq:odd_p>3}) and 
(\ref{eq:curve.12}), we obtain an explicit formula for the 
number $|\Sp_1(\F_q)|$ of supersingular elliptic curves over $\Fq$.

\begin{thm}\label{curve.3}
  Suppose $q=p^a$ is a power of the prime number $p$.
\begin{enumerate}
  \item If $a$ is odd, then 
\begin{equation}
    \label{eq:curve.13}
  |\Sp_1(\F_q)|=
  \begin{cases}
    3, 4 & \text{for $p=2,3$, respectively;} \\
    h(\sqrt{-p}) & \text{for $p\equiv 1 \pmod 4$;}\\
    \left(3-\left (\frac{2}{p}\right )\right )h(\sqrt{-p}) & 
    \text{for $p\equiv 3 \pmod 4$ and $p>3$}.    
  \end{cases}
  \end{equation}
  \item If $a$ is even, then 
\begin{equation}
    \label{eq:curve.14}
    \begin{split}
  |\Sp_1(\F_q)| = \frac{p-1}{6}+\frac{8}{3}\left
  (1-\left(\frac{-3}{p}\right )\right )+\frac{3}{2}\left
  (1-\left(\frac{-4}{p}\right )\right ).      
    \end{split}    
\end{equation}
\end{enumerate}
\end{thm}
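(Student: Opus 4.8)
The plan is to run the counting method~(\ref{eq:sp.1}). Since a supersingular elliptic curve is automatically superspecial, one has $S_1(\Fq)=\Sp_1(\Fq)$ and $H(\pi)=H_{\rm sp}(\pi)$ for every $1$-dimensional supersingular Weil $q$-number, so $|\Sp_1(\Fq)|=\sum_{\pi\in W^{\rm ss}_q(1)}H(\pi)$. The first step is to pin down $W^{\rm ss}_q(1)$: reading off Propositions~\ref{prop:dim.1} and~\ref{prop:dim.5}, when $a$ is odd it consists of $\sqrt{q}\zeta_4$ together with $\pm\sqrt{q}\zeta_8$ if $p=2$ or $\pm\sqrt{q}\zeta_{12}$ if $p=3$, while when $a$ is even it consists of $\pm\sqrt{q}$ together with $\pm\sqrt{q}\zeta_3$ if $p\not\equiv1\pmod3$ and $\sqrt{q}\zeta_4$ if $p\not\equiv1\pmod4$. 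These are precisely the lists~(\ref{eq:curve.2}), so it remains to evaluate each term $H(\pi)$.

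Fix $\pi\in W^{\rm ss}_q(1)$ with $K=\Q(\pi)$ imaginary quadratic. Theorem~\ref{curve.1}(1) identifies the endomorphism rings occurring in the isogeny class of $X_\pi$ as exactly the orders $R\subseteq K$ that contain $\pi$ and are maximal at $p$, and Theorem~\ref{curve.1}(2) says $\Pic(R)$ acts freely on the set of isomorphism classes in that class with endomorphism ring $R$, with $N$ orbits, where $N=2$ if $p$ is inert in $K$ and $a$ is even and $N=1$ otherwise. Hence $H(\pi)=N\sum_R h(R)$, the sum over all such $R$. Letting $R_0$ be the smallest such order, a short conductor computation --- using that $\pi$ is a $p$-power multiple of an element of $O_K$, so maximality at $p$ forces the conductor to be prime to $p$ and to divide a power of $p$ --- shows $R_0=O_K$ in every case except $\pi=\sqrt{q}\zeta_4$ with $p\equiv3\pmod4$ and $a$ odd, where $R_0=\Z[\sqrt{-p}]$ has conductor $2$ and both $\Z[\sqrt{-p}]$ and $O_K$ occur. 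So $H(\pi)=N\,h(O_K)$ generically, while $H(\sqrt{q}\zeta_4)=h(\Z[\sqrt{-p}])+h(O_K)$ with $N=1$ in the exceptional case.

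Then I would assemble the two parities. For $a$ odd with $p\in\{2,3\}$ every field and order that appears has class number $1$, and the sums collapse to $|\Sp_1(\Fq)|=h(\sqrt{-2})+2h(\sqrt{-1})=3$ for $p=2$ and $|\Sp_1(\Fq)|=\bigl(h(\Z[\sqrt{-3}])+h(\sqrt{-3})\bigr)+2h(\sqrt{-3})=4$ for $p=3$. For $a$ odd with $p>3$ only $\sqrt{q}\zeta_4$ survives, $K=\Q(\sqrt{-p})$ and $N=1$; if $p\equiv1\pmod4$ then $R_0=O_K$ and $H=h(\sqrt{-p})$, and if $p\equiv3\pmod4$ then $H=h(\Z[\sqrt{-p}])+h(\sqrt{-p})$, where the conductor-$2$ class-number formula (the unit index being trivial since $-p<-4$) gives $h(\Z[\sqrt{-p}])=h(\sqrt{-p})$ or $3h(\sqrt{-p})$ according as $2$ splits or is inert in $\Q(\sqrt{-p})$, i.e. as $p\equiv7$ or $p\equiv3\pmod8$, so $H=2h(\sqrt{-p})$ or $4h(\sqrt{-p})$, uniformly $\bigl(3-\bigl(\tfrac2p\bigr)\bigr)h(\sqrt{-p})$. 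For $a$ even, start from $|\Sp_1(\Fq)|=2H(\sqrt{q})+2\delta_3(p)H(\sqrt{q}\zeta_3)+\delta_4(p)H(\sqrt{q}\zeta_4)$ with $\delta_m(p)\in\{0,1\}$ recording whether $p\not\equiv1\pmod m$; here $H(\sqrt{q})=h(B_{p,\infty})$ is the class number of the definite quaternion $\Q$-algebra ramified at $\{p,\infty\}$, supplied by Deuring's (Eichler's) mass formula~(\ref{eq:curve.10}), while for $\zeta_3,\zeta_4$ the field $K$ is $\Q(\sqrt{-3})$ resp. $\Q(\sqrt{-1})$, of class number $1$, with $N=2$ exactly when $p$ is inert, i.e. $p\equiv2\pmod3$ resp. $p\equiv3\pmod4$. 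Sorting the cases according as $p$ is ramified, inert, or split gives $\delta_3(p)H(\sqrt{q}\zeta_3)=1-\bigl(\tfrac{-3}{p}\bigr)$ and $\delta_4(p)H(\sqrt{q}\zeta_4)=1-\bigl(\tfrac{-4}{p}\bigr)$; substituting and collecting terms yields~(\ref{eq:curve.12}) and the theorem.

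The difficulty here is essentially organizational. The two non-formal ingredients --- Theorem~\ref{curve.1} (already proved, after Waterhouse) and the two classical class-number identities (the conductor-$2$ formula for imaginary quadratic orders, and Deuring's formula for $h(B_{p,\infty})$) --- are available off the shelf, so the real work lies in making sure the list $W^{\rm ss}_q(1)$ is complete (which rests entirely on the dimension computations of Section~\ref{sec:dim}) and in keeping the bookkeeping of the orbit count $N$ and of the single non-maximal order straight; the repackaging of the various residue-class cases into the stated closed form is the last, minor, hurdle.
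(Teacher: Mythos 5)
Your proposal is correct and follows essentially the same route as the paper: the same list $W_q^{\rm ss}(1)$ from Section~\ref{sec:dim}, the same use of Theorem~\ref{curve.1} to get $H(\pi)=N\sum_R h(R)$ with the single non-maximal order $\Z[\sqrt{-p}]$ appearing only for $\pi=\sqrt{q}\zeta_4$, $p\equiv 3\pmod 4$, $a$ odd, and the same assembly via (\ref{eq:curve.9})--(\ref{eq:curve.10}) in the even-exponent case. The only (harmless) divergence is that you evaluate $h(\Z[\sqrt{-p}])$ directly by the conductor-$2$ class-number formula where the paper cites \cite[Theorem~1.1]{yu:sp_prime}; both give $\left(3-\left(\frac{2}{p}\right)\right)h(\sqrt{-p})$.
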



\begin{remark}\label{curve.4}
  From the formulas above we observe a phenomenon that the number
  $|\Sp_1(\Fq)|$ depends only on the parity of the exponent $a$ of
  $q=p^a$. We have already seen in Section 2 that the classification
  of supersingular isogeny classes depends only on the parity of
  $a$. More explicitly, if the exponents $a$ and $a'$ of $q$ and $q'$ respectively have
  the same parity, then a bijective correspondence between
  supersingular isogeny classes over
  $\F_q$ and those over $\F_{q'}$ can be given by matching
  $\pi\in W_q^{\rm ss}(1)$ with $\pi'=(-p)^{(a'-a)/2}\pi$ (see
  Remark~\ref{rem:parity-isogeny-class}).  The parity phenomenon of
  $|\Sp_1(\Fq)|$ arises because there is a bijection
  $\Sp(\pi)\simeq \Sp(\pi')$ for all pairs  $(\pi, \pi')$ as above. Indeed, if $\pi$
  and $\pi'$ are of the form in
  (\ref{eq:1-dim-weil-number-2-orbits}), then a canonical bijection
  $\Sp(\pi)\simeq \Sp(\pi')$ is given by identifying both with
  $\Hom(O_K/(p), \F_q)$ as in Remark~\ref{rem:2-orbits-case}.  For the
  remaining cases, first suppose that $K=\Q(\pi)=\Q(\pi')$ is
  imaginary quadratic.  Then the endomorphism rings occurring for both
  isogeny classes are the same by Theorem~\ref{curve.1}. We partition
  $\Sp(\pi)$ into $\coprod_{R} \Sp(\pi, R)$, where $R$ runs over all
  possible endomorphism rings, and $\Sp(\pi, R)\subseteq
  \Sp(\pi)$ consists of those members with endomorphism ring $R$.
  Every $\Sp(\pi, R)$ is  
a principal homogeneous
  space of $\Pic(R)$. Thus a $\Pic(R)$-equivariant bijection between
  $\Sp(\pi, R)$ and $\Sp(\pi', R)$  is established whenever a base point is
  chosen respectively in each of them. Lastly,  suppose that
  $\Q(\pi)=\Q(\pi')=\Q$. 
  Then $\pi^{a'}=(\pi')^a=p^{aa'/2}$. So we have canonical bijections
  $\Sp(\pi)\simeq \Sp(\pi^{a'})\simeq \Sp(\pi')$ by
  extending both base fields to $\F_{p^{aa'}}$ (\cite[
  Remark,~p.~542]{waterhouse:thesis}). Equivalently, the bijection
  $\Sp(\pi)\simeq 
  \Sp(\pi')$ can be obtained by matching the $j$-invariants. 
\end{remark}



 
\section{Superspecial abelian surfaces over $\Fp$}
\label{sec:sp}

In this section we assume that the ground field is the prime 
field $\Fp$; abelian varieties and their morphisms are all 
defined over $\Fp$ unless otherwise stated.

\subsection{Supersingular abelian varieties over $\Fp$}
\label{sec:sp.1}

We describe a result which allows us to count supersingular 
and superspecial abelian
varieties over $\Fp$, based on a result of 
Waterhouse~\cite[Theorem 6.1 (3)]{waterhouse:thesis} (also see an
extension~\cite[Theorem 3.1]{yu:sp_prime} to non-simple isogenies).

  
Let $X_0$ be a fixed supersingular abelian variety over $\Fp$ and 
let $\pi=\pi_1^{m_1}\times \dots \times \pi_r^{m_r}$ be a multiple Weil
$p$-number corresponding to the isogeny class $[X_0]$. One has
$X_0\sim \prod_{i=1}^r X_i^{m_i}$, where each 
$X_i$ is a simple abelian variety with Frobenius endomorphism
$\pi_i$. 
The endomorphism algebra $\calE=\End^0(X_0)$ of $X_0$ is
equal to 
$\prod_{i=1}^r \Mat_{m_i}(\End^0(X_i))$. Let $\pi_0\in \End(X_0)$ be 
the Frobenius endomorphism. The $\Q$-subalgebra $K=\Q(\pi_0)\subset
\calE$ generated by $\pi_0$ is semi-simple and 
coincides with the center of $\calE$. One has
$K=\prod_i K_i$ and $\pi_0=(\pi_1,\dots, \pi_r)$, 
where $K_i=\Q(\pi_i)$. Let $\calR:=\Z[\pi_0,p \pi_0^{-1}]\subset
K$ and $\calR_{sp}:=\calR[\pi_0^2/p]\subset K$. 
Clearly $\pi_0^2/p$ is an integral element of finite multiplicative order, and
$p/\pi_0=\pi_0\cdot (\pi_0^2/p)^{-1}$, so $\calR_{sp}=\zz[\pi_0,
\pi_0^2/p]\subseteq O_K$, where $O_K=\prod_i
O_{K_i}$ is  the maximal order $K$.  Observe that the Tate module $T_\ell(X_0)$ (for any prime
$\ell\neq p$), as a $\Z_\ell[\Gal(\Fpbar/\Fp)]$-module, 
is nothing but an $\calR_\ell$-module, and the (covariant) \dieu
module $M(X_0)$ is simply an $\calR_p$-module, where
$\calR_\ell=\calR\otimes \Z_\ell$ and $\calR_p=\calR\otimes \Z_p$. 


 
\begin{thm}\label{prop:sp.1} Let $\pi=\pi_1^{m_1}\times \dots
  \pi_r^{m_r}$, and $K$, $\calR$ and $\calR_{sp}$ be as above. Assume that
  $K$ has no real place, that is, none of $\pi_i$ is conjugate to
  $\sqrt{p}$, and set $V:=\prod_{i=1}^r K_i^{m_i}$.   
\begin{enumerate}
  \item There is a natural 
     bijection between the set ${\rm Isog}(\pi)$ and the
     set of isomorphism classes of $\calR$-lattices in $V$. 
  \item Under the above map the subset ${\rm Sp}(\pi)$ is in bijection 
     with set of isomorphism classes of $\calR_{sp}$-lattices in $V$.
\end{enumerate}
\end{thm}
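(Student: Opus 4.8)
The plan is to reduce the classification of abelian varieties in the isogeny class $[X_\pi]$ to a problem about lattices via Tate modules and Dieudonn\'e modules, following the line of Waterhouse. First I would fix $X_0=X_\pi$ as a base object and recall that, since $K=\Q(\pi_0)$ has no real place, the endomorphism algebra $\calE=\End^0(X_0)$ is \emph{commutative}: at every place the local invariant vanishes (the real-place contribution is the only source of ramification here, and by hypothesis none of the $\pi_i$ is conjugate to $\sqrt p$), so $\calE=K=\prod_i K_i$ and accordingly $V=\prod_i K_i^{m_i}$ is a free rank-one module over $\calE$ of the expected dimension. The key structural input is that for any $X$ in the isogeny class, a choice of $\Fp$-isogeny $X\to X_0$ identifies $T_\ell X$ with an $\calR_\ell$-lattice in $V\otimes\Q_\ell$ (using $\calR=\Z[\pi_0,p\pi_0^{-1}]$, which is exactly the subring of $\calE$ acting on $\ell$-adic Tate modules and, at $p$, the Dieudonn\'e module since $\pi_0$ acts as Frobenius and $p/\pi_0$ as Verschiebung), and conversely.

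The core of the argument is then a gluing/descent step: an abelian variety in $[X_0]$ up to $\Fp$-isomorphism is the same as a quasi-isogeny $X_0\dashrightarrow X$ modulo $\Aut(X)$, equivalently the same as a compatible family of $\calR_\ell$-lattices in $V\otimes\Q_\ell$ for all $\ell$ (including the Dieudonn\'e lattice at $p$), agreeing with the standard lattice at almost all $\ell$, modulo the action of $\calE^\times$; by the arithmetic of lattices over the order $\calR$ this adelic datum is the same as a single $\calR$-lattice in $V$ up to isomorphism. This is precisely \cite[Theorem 6.1(3)]{waterhouse:thesis} (and its non-simple extension \cite[Theorem 3.1]{yu:sp_prime}): the functor $X\mapsto (T_\ell X)_{\ell\neq p}\times M(X)$ is an equivalence between the isogeny class and the category of $\calR$-lattices in $V$, and isomorphism classes correspond. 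That gives part (1). For part (2), I would use Oort's characterization that a supersingular $X$ is superspecial if and only if its $a$-number is maximal, equivalently $\Fr$ and $\Ver$ both kill $M(X)/pM(X)$, i.e.\ $\pi_0 M(X)=\ol\pi_0 M(X)=pM(X)$ where $\ol\pi_0=p/\pi_0$; since $\pi_0^2/p=\pi_0/\ol\pi_0$, this is equivalent to $M(X)$ being stable under $\pi_0^2/p$, i.e.\ to $M(X)$ being an $\calR_p[\pi_0^2/p]=(\calR_{sp})_p$-module. Because $\calR_{sp}$ and $\calR$ agree away from $p$, this singles out, among $\calR$-lattices in $V$, exactly the $\calR_{sp}$-lattices, and the correspondence of part (1) restricts to the asserted bijection.

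The step I expect to be the main obstacle is the superspecial criterion at $p$: one must check carefully that "superspecial" for a member of this specific isogeny class is detected purely on the Dieudonn\'e module by stability under $\pi_0^2/p$, and that this does not interact with the prime-to-$p$ data (so that enlarging $\calR$ to $\calR_{sp}$ only imposes a condition at $p$). Here I would be careful about two points: that for an abelian variety with commutative endomorphism algebra the notions of superspecial and "$a$-number maximal" coincide in the relevant dimension, and that $\calR_{sp}=\Z[\pi_0,\pi_0^2/p]$ is genuinely contained in $O_K$ (noted in the text, since $\pi_0^2/p$ has finite order) so that $\calR_{sp}$-lattices form a sensible, nonempty category. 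The remaining verifications --- that $\calR$ acts faithfully on Tate and Dieudonn\'e modules, that the standard lattice is recovered from $X_0$ itself, and that isomorphisms of abelian varieties match isomorphisms of lattices --- are routine once the framework of \cite{waterhouse:thesis} and \cite{yu:sp_prime} is invoked, and I would cite them rather than reprove them.
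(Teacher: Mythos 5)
Your overall strategy coincides with the paper's: classify members of $[X_\pi]$ by their Tate and Dieudonn\'e modules, viewed as an adelic family of local lattices modulo $\calE^\times$, identify this datum with a single $\calR$-lattice in $V$ up to isomorphism, and cut out the superspecial locus by the condition $\pi_0^2M(X)=pM(X)$, i.e.\ stability of $M(X)$ under $\pi_0^2/p$, which is a condition purely at $p$. Part (2) is handled exactly as in the paper. (Your intermediate phrasing --- that Frobenius and Verschiebung both kill $M(X)/pM(X)$, ``i.e.\ $\pi_0M(X)=\ol\pi_0M(X)=pM(X)$'' --- is garbled: those conditions are jointly contradictory since the product of Frobenius and Verschiebung is $p$, and $\pi_0M=pM$ fails already for a superspecial elliptic curve. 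The correct criterion is $\pi_0M(X)=(p/\pi_0)M(X)$, equivalently $\pi_0^2M(X)=pM(X)$, which is the condition you actually end up using, so the conclusion stands.)

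There is, however, a concrete error in your justification of part (1). You claim that the no-real-place hypothesis forces $\calE=\End^0(X_0)$ to be commutative and equal to $K$, with $V$ free of rank one over it. This is false whenever some $m_i>1$: one has $\calE=\prod_i\Mat_{m_i}(\End^0(X_i))$, and the hypothesis only guarantees that each simple factor satisfies $\End^0(X_i)=K_i$ (case (F)), so that $\calE=\prod_i\Mat_{m_i}(K_i)=\End_K(V)$ is noncommutative and $V$ is not $K$-free of rank one. The theorem is applied in the paper precisely to $\pi=\pi_1\times\pi_1$ with $\pi_1\in W^{\rm ss}_p(1)$ (e.g.\ $\pi_1=\sqrt{-p}$), so the case $m_i=2$ genuinely occurs. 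What the hypothesis actually buys --- and what the paper's proof isolates --- is the identification $\calE^\times\simeq\GL_K(V)$: two lattices in $V$ are isomorphic as $\calR$-modules iff they differ by an element of $\GL_K(V)$, while two quasi-isogenies to $X_0$ yield isomorphic abelian varieties iff they differ by an element of $\calE^\times$, and the bijection of part (1) needs these two groups to agree. If some $\pi_i$ were conjugate to $\sqrt{p}$, then $\End^0(X_i)$ would be a quaternion algebra over $K_i$ and $\calE^\times\not\simeq\GL_K(V)$, which is exactly why the real place is excluded. Under your literal reading, $\calE^\times=K^\times\subsetneq\GL_K(V)$ for $m_i>1$, the two orbit spaces would not match, and the count would be wrong for the self-product classes. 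The fix is a one-line correction, after which your argument agrees with the paper's.
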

\begin{proof}
Set $\Lambda:=\prod_{i=1}^r O_{K_i}^{m_i}\subset V$. We view $V$ and $\Lambda$ as
a $K$-module and an $\calR$-lattice, respectively. One chooses an
identification $V\otimes_\Q \Q_\ell = T_\ell(X_0)\otimes \Q_\ell$ for
primes $\ell\neq p$ and $V\otimes_\Q \Q_p=M(X_0)\otimes \Q_p$ such
that $\Lambda_\ell=T_\ell(X_0)$ for almost all primes $\ell$. 
Under this identification, any $\calR$-lattice $\Lambda'$ 
in $V$ gives rise to a unique
quasi-isogeny $\varphi: X\to X_0$ such that
$\varphi_*(T_\ell(X))=\Lambda'\otimes \Z_\ell$ for $\ell\neq p$ and  
$\varphi_*(M(X))=\Lambda'\otimes \Z_p$.
Two lattices $\Lambda_1$ and
$\Lambda_2$ are isomorphic as $\calR$-modules 
if and only if there is an element $g\in
\GL_K(V)$ such that $\Lambda_2=g \Lambda_1$. Two quasi-isogenies are
isomorphic if and only if they differ by an element in
$\calE^\times$. Our assumption ensures
that $\GL_K(V)\simeq \calE^\times$. 
Then the above correspondence induces the desired bijection 
(also see \cite[Theorem 3.1]{yu:sp_prime} for a detailed proof). 


Note that
the abelian variety $X$ in $[X_0]$ as above 
is superspecial if and only if $\pi_0^2 M(X)=pM(X)$, or equivalently,
$M(X)$ is a $(\calR_{sp})_p$-lattice in $M(X_0)\otimes \Q_p$. That is,
$X$ is superspecial if and only if the corresponding $\calR$-module is
$\calR_{sp}$-stable. The statement (2) then follows from (1). 
\end{proof}

\begin{rem}\label{rem:critical-max-order}
  Let $\pi=\pi_1^{e_1}$ be a multiple supersingular Weil-$p$ number with $\pi_1=\pm
  \sqrt{p}\zeta_n$ and $n$ critical
  at $p$. Then by Lemma~\ref{lm:dim.2},  $K=\Q(\pi_1)=\Q(\zeta_m)$ and
  $O_K=\Z[\zeta_m]$, where $m$ is define in
  (\ref{eq:def-of-m}).  Since $\calR_{sp}=\calR[\pi_1^2/p]\ni
  \zeta_m$, it follows that $\calR_{sp}$ coincides with the maximal order
  $O_K$ in this case. 

\end{rem}
\subsection{Proof of the main theorem}
\label{sec:sp.2}

By Section~\ref{sec:dim}, we list the sets $W^{\rm ss}_p(1)$ and
$W^{\rm ss}_p(2)$ of supersingular Weil $p$-numbers of dimension $1$ or
$2$ as follows.
\begin{align} \label{simple_dimension_one}
&W^{\rm ss}_2(1) = \{ \sqrt{2}\zeta_4, \pm \sqrt{2}\zeta_8\}, \notag \\ 
&W^{\rm ss}_3(1) = \{ \sqrt{3}\zeta_4, \pm \sqrt{3}\zeta_{12} \}, \\ 
&W^{\rm ss}_p(1) = \{ \sqrt{p}\zeta_4 \}, \quad p \geq 5; \notag
\end{align}
and
\begin{align}\label{simple_dimension_two}
&W^{\rm ss}_2(2) = \{\sqrt{2}, \sqrt{2}\zeta_3, \sqrt{2}\zeta_{12},
\pm \sqrt{2}\zeta_{24} \}, \notag\\ 
&W^{\rm ss}_3(2) = \{\sqrt{3}, \sqrt{3}\zeta_3, \sqrt{3}\zeta_8 \},\\
&W^{\rm ss}_5(2) = \{\sqrt{5}, \sqrt{5}\zeta_3,
\sqrt{5}\zeta_8, \sqrt{5}\zeta_{12}, \pm \sqrt{5}\zeta_5\}, \notag \\ 
&W^{\rm ss}_p(2) = \{\sqrt{p}, \sqrt{p}\zeta_3, \sqrt{p}\zeta_8,
\sqrt{p}\zeta_{12}\}, \quad p \geq 7. \notag 
\end{align}

Consider the case $\pi\in W^{\rm ss}_p(2)$ or $\pi=\pi_1\times \pi_2$ with
$\pi_1,\pi_2\in W^{\rm ss}_p(1)$. By
(\ref{eq:sp.1}) we have
\begin{equation}
  \label{eq:sp.4}
  |{\rm Sp}_2(\Fp)|=\sum_{\pi\in W^{\rm ss}_p(2)} H_{sp}(\pi)+
    \sum_{\pi_1, \pi_2\in W^{\rm ss}_p(1)} H_{sp}(\pi_1\times \pi_2).
\end{equation}
The number $H_{sp}(\sqrt{p})=H(\sqrt{p})$ has 
been calculated in \cite{xue-yang-yu:ECNF}, so
this case will be excluded from our discussion. We refer to
\cite[Section 37]{curtis-reiner:1} for the definition of a Bass
order. Note that when $\pi=\pi_1\times \pi_1$,  $\calR_{sp}$ is an order in the quadratic field
$\Q(\pi_1)$, and such orders are well known to be Bass.
It will be shown in Section~\ref{subsec:order-Rsp-bass} that $\calR_{sp}$
is a Bass order for all $\pi$ considered 
(i.e. $\pi\in MW^{\rm ss}_p(2)$).   
Thus, when the $K$-module
$V$ is free of rank one 
(i.e. in the case where $\pi\neq \pi_1\times \pi_1$), 
Theorem~\ref{prop:sp.1} gives 
\begin{equation}
  \label{eq:sp.5}
  H_{sp}(\pi)=\sum_{\calR_{sp}\subset B\subset O_K} h(B). 
\end{equation}
In the case when $V$ is free of higher rank (in fact, rank $2$
when $\pi=\pi_1\times \pi_1$) , one can use the results
of Borevich and Faddeev on lattices over orders of cyclic index to
compute $H_{sp}(\pi)$ (cf. \cite[Section 37, p.~789]{curtis-reiner:1}).  

In the following, the notation $B_{\pi,j}$ (or $B_j$ for short) with $j\in \bbN$, 
will stand for an order $B$ of $K$ with 
$\calR_{sp}\subset B\subset O_K$ and $[O_K:B]=j$.
The dependence of $K$, $\calR_{sp}$ and $B_j$ on the choice of the Weil
$p$-number $\pi$ should be understood though 
it is omitted from the notation.  For any two square-free integers $d>1$ and $j\ge 1$, 
we write $K_{d,j}$ for the
CM field $\Q(\sqrt{d},\sqrt{-j})$. For a finite collection of
algebraic numbers $\alpha_1,\dots, \alpha_n$, the notation
$h(\alpha_1,\dots, \alpha_n)$ denotes the class number of the number field
$\Q(\alpha_1,\dots, \alpha_n)$. Particularly,
$h(\sqrt{d},\sqrt{-j})$ and $h(K_{d,j})$ have the same meaning.   \\
    
\npr {\bf Case $\pi=\pi_1\times \pi_1$.} 
For $\pi_1=\pm \sqrt{2}
\zeta_8$, one has $K=\Q(\sqrt{-1})$, $\calR_{sp}=\calR=O_K$, and
$H_{sp}(\pi)=H(\pi)=1$. For $\pi_1=\pm \sqrt{3}
\zeta_{12}$, one has $K=\Q(\sqrt{-3})$, $\calR_{sp}=\calR=O_K$, and
$H_{sp}(\pi)=H(\pi)=1$. 

For $\pi_1=\sqrt{-p}$, we have $K=\Q(\sqrt{-p})$, $\calR_{sp}=\calR$ and 
$[O_K:\calR_{sp}]=2$ or $1$ depending on $p\equiv 3 \pmod 4$ or
not. In this case we have $H_{sp}(\pi)=1,3$ for $p=2,3$, respectively
and 
\begin{equation}
  \label{eq:self_product}
  H_{sp}(\pi)=
  \begin{cases}
    h(\sqrt{-p}) & \text{for $p\equiv 1 \pmod 4$;} \\
    \left(4-\left (\frac{2}{p}\right ) \right ) h(\sqrt{-p}) 
    & \text{for $p\equiv 3 \pmod 4$ and $p>3$;}
  \end{cases}
\end{equation}
see \cite[Theorem 1.1]{yu:sp_prime}.
The contribution of the self-product cases is
\begin{equation}
  \label{eq:total_self_product}
  \sum_{\pi_1\in W^{\rm ss}_p(1)}  H_{sp}(\pi_1\times \pi_1)=
  \begin{cases}
    3, 5  & \text{for $p=2, 3$, respectively;} \\ 
    h(\sqrt{-p}) & \text{for $p\equiv 1 \pmod 4$;} \\
    \left(4-\left (\frac{2}{p}\right ) \right ) h(\sqrt{-p}) 
    & \text{for $p\equiv 3 \pmod 4$ and $p>3$.}
  \end{cases}
\end{equation}

\npr {\bf Case $\pi=\pi_1\times \pi_2$, $\pi_1\neq \pi_2$.} This
occurs only when $p=2$ or $3$. 
The following are class numbers of $B$ with $\calR_{sp}\subset
B\subset O_K$ obtained in Section~\ref{subsec:supord-Rsp-prod-case}. 

\ \\
\begin{tabular}{|c|c|c|c|c|}
\hline
$\pi=\pi_1\times \pi_2$ & $K$ &  $[O_K:\calR_{sp}]$ & $\calR_{sp}\subset
B\subset O_K$ & $h(B)$ \\ \hline
$ \sqrt{2}\zeta_4\times \pm \sqrt{2}\zeta_8$ & 
$\mathbb{Q}(\sqrt{-2}) \times \mathbb{Q}(\sqrt{-1})$ 
& 2 & $\calR_{sp}$, $O_K$  &  $1,1$ \\  \hline
$\sqrt{2}\zeta_8\times - \sqrt{2}\zeta_8$ & 
$\mathbb{Q}(\sqrt{-1}) \times \mathbb{Q}(\sqrt{-1})$ & 8 &
$\calR_{sp},B_4,B_2, O_K$ & $1,1,1,1$ \\ \hline
$ \sqrt{3}\zeta_4\times \pm \sqrt{3}\zeta_{12} $  & 
$\mathbb{Q}(\sqrt{-3}) \times \mathbb{Q}(\sqrt{-3})$ & 6 &
$\calR_{sp}, B_3, B_2, O_K$ &
$1,1,1,1$\\ \hline
$ \sqrt{3}\zeta_{12}\times -\sqrt{3}\zeta_{12}$ &
$\mathbb{Q}(\sqrt{-3}) \times \mathbb{Q}(\sqrt{-3})$ & 12 &
 $\calR_{sp}, B_4, B_3, O_K$ &
 $1,1,1,1$ \\ \hline
\end{tabular} \ \\
The orders $B_j$ are listed here for the convenience of the reader.
\begin{align*}
  B_2&=\Z[(1+\zeta_4, 0), (\zeta_4, \zeta_4)] & \quad \text{ for }
                                                \pi&=\sqrt{2}\zeta_8\times
                                                - \sqrt{2}\zeta_8;\\
  B_2&=\Z[\sqrt{-3}]\times \Z[\zeta_6] & \quad \text{ for }
  \pi&=\sqrt{3}\zeta_4\times \pm \sqrt{3}\zeta_{12};\\
  B_3&=\Z[(\sqrt{-3}, 0), (\zeta_6, \zeta_6)] &\quad \text{ for }
  \pi&=\sqrt{3}\zeta_4\times \pm\sqrt{3}\zeta_{12} \text{ or } \sqrt{3}\zeta_{12}\times -\sqrt{3}\zeta_{12};\\
  B_4&=\Z[(2,0), (\zeta_{2p}, \zeta_{2p})]  &\quad \text{ for }
  \pi&=\sqrt{p}\zeta_{4p}\times -\sqrt{p}\zeta_{4p}  \text{ and } p=2,
  3. 
\end{align*}
The contribution of other non-simple cases is 
\begin{equation}
  \label{eq:other_product}
  \sum_{\pi_1\neq \pi_2}  H_{sp}(\pi_1\times \pi_2)=
  \begin{cases}
    2\times 2+4=8  & \text{for $p=2$;} \\ 
    2\times 4+4=12 & \text{for $p=3$.} \\
  \end{cases}
\end{equation}

\npr {\bf Case $\pi\in W^{\rm ss}_p(2)$.} We have $\pi\in
\{ \pm\sqrt{2}\zeta_{24}, \pm\sqrt{5}\zeta_{5}, \sqrt{p}\zeta_8\
(p\neq 2), \sqrt{p} \zeta_3, \sqrt{p}\zeta_{12}\ (p\neq 3)\}.$  
For $\pi=\pm \sqrt{p}\zeta_n$ with $(p,n)=(5,5)$ or
$(2, 24)$, we have $\calR_{sp}=O_K$ by
Remark~\ref{rem:critical-max-order} since $n$ is critical at $p$. For
$\pi=\sqrt{p}\zeta_8$ with $p\neq 2$, we have
$K=\Q(\sqrt{p}\zeta_8)=\Q(\sqrt{-1}, \sqrt{2p})$ and
$\calR_{sp}=\Z[(\sqrt{2p}+\sqrt{-2p})/2, \sqrt{-1}]$, which is the maximal 
order in $K$ by Exercise
42(b) of \cite[Chapter 2]{MR0457396}. Therefore, 
\begin{equation}
  \label{eq:maximal_order}
  H_{sp}(\pm\sqrt{2}\zeta_{24})=H_{sp}(\pm\sqrt{5}\zeta_{5})=1,
\quad h(\sqrt{p}\zeta_8)=h(\sqrt{2p},\sqrt{-1}), \quad p\neq 2. 
\end{equation}

For $\pi=\sqrt{p}\zeta_3$, we have $K=\Q(\sqrt{p},\sqrt{-3})$ and
$\calR_{sp}=\Z[\sqrt{p}, \zeta_3]$. 
The suborders $B\subseteq O_K$ containing $\Z[\sqrt{p}]$ with the property
$[B^\times:\Z[\sqrt{p}]^\times]>1$
are classified in \cite{xue-yang-yu:num_inv}. We list the
suporders of $\calR_{sp}$ in $O_K$ and their class numbers in the
following table. 

\smallskip

\begin{tabular}{|c|c|c|c|}
\hline
$\pi=\sqrt{p}\zeta_3$  &  $[O_K:\calR_{sp}]$ & $\calR_{sp}\subset
B\subset O_K$ & $h(B)$ \\ \hline
$p=2$ &  1 & $O_K$  &  $1$ \\  \hline
$p=3$ & 3 & $\calR_{sp},\  O_K$ & $1,1$ \\ \hline
$p\equiv 3 \pmod 4,\ p\neq 3$ & 1 & $O_K$ &
$h(K)$\\ \hline
$p\equiv 1 \pmod 4$ & 4 &
 $\calR_{sp},\  O_K$ &
 $\varpi_p\, h(K), h(K)$ \\ \hline
\end{tabular} \ \\

Thus, 
\begin{equation}
  \label{eq:zeta3}
  H_{sp}(\sqrt{p}\zeta_3)=
  \begin{cases}
    1, 2  & \text{for $p=2, 3$, respectively;} \\ 
    (\varpi_p+1) h(\sqrt{p},\sqrt{-3}) & \text{for $p\equiv 1 \pmod 4$;} \\
    h(\sqrt{p},\sqrt{-3}) 
    & \text{for $p\equiv 3 \pmod 4$ and $p>3$.}
  \end{cases}
\end{equation}

For $\pi=\sqrt{p}\zeta_{12}$ ($p\neq 3$), we have
$K=\Q(\sqrt{-p},\sqrt{-3})$ and $\calR_{sp}=
\Z[\sqrt{p}\zeta_{12}, \zeta_6]=\Z[\sqrt{-p}, \zeta_6]$. We have the
following results from
Section~\ref{subsec:supord-Rsp-simple-case}. \\

\begin{tabular}{|c|c|c|c|}
\hline
$\pi=\sqrt{p}\zeta_{12}\ (p\neq 3)$  &  $[O_K:\calR_{sp}]$ &
$\calR_{sp}\subset 
B\subset O_K$ & $h(B)$ \\ \hline
$p=2$ &  1 & $O_K$  &  $1$ \\  \hline
$p\equiv 1 \pmod 4$ & 1 & $O_K$ & $h(K)$\\ \hline
$p\equiv 3 \pmod 4$ & 4 & $\calR_{sp}, O_K$ &
 $\varpi_{3p}\, h(K), h(K)$ \\ \hline
\end{tabular} \ \\

Thus, 
\begin{equation}
  \label{eq:zeta12}
  H_{sp}(\sqrt{p}\zeta_{12})=
  \begin{cases}
    1  & \text{for $p=2$;} \\ 
    h(\sqrt{-p},\sqrt{-3}) & \text{for $p\equiv 1 \pmod 4$;} \\
    (\varpi_{3p}+1) h(\sqrt{-p},\sqrt{-3})  
    & \text{for $p\equiv 3 \pmod 4\ (p\neq 3)$.}
  \end{cases}
\end{equation}

The following are the class numbers of the fields
$K=\Q(\sqrt{p} \zeta_n)$ for $n\in\{3,8,12\}$ and
$p\in\{2,3,5\}$. They are checked using Magma. \\

\begin{tabular}{|c|c|c|c|}
\hline
$h(K)$  &  $p=2$ & $p=3$ & $p=5$ \\ \hline
$\Q(\sqrt{p}\zeta_3)=\Q(\sqrt{p},\sqrt{-3})$ &  $1$ & $1$  &  $1$ \\
\hline 
$\Q(\sqrt{p}\zeta_8)=\Q(\sqrt{2p},\sqrt{-3})$ & $1$ & $2$ & $2$\\ \hline
$\Q(\sqrt{p}\zeta_{12})=\Q(\sqrt{-p},\sqrt{-3})$ & $1$ & $1$ &
 $2$ \\ \hline
\end{tabular} \ \\

We collect the contribution of simple cases. For $p=2$, we have
\begin{equation}
  \label{eq:simple2}
  H_{sp}(\sqrt{2} \zeta_3)+H_{sp}(\sqrt{2}\zeta_{12})+2
  H_{sp}(\sqrt{2}\zeta_{24})=1+1+2=4.
\end{equation}
For $p=3$, we have
\begin{equation}
  \label{eq:simple3}
  H_{sp}(\sqrt{3} \zeta_3)+H_{sp}(\sqrt{3}\zeta_{8})=1+2=3.
\end{equation}  
For $p=5$, we have 
\begin{equation}
  \label{eq:simple5}
  H_{sp}(\sqrt{5} \zeta_3)+H_{sp}(\sqrt{3}\zeta_{8})+
  H_{sp}(\sqrt{5}\zeta_{12})+2
  H_{sp}(\sqrt{5}\zeta_{5})=1+2+2+2=7.
\end{equation}
For $p\ge 7$, we have
\begin{equation}
  \label{eq:simple}
  \begin{split}
  & \sum_{\pi\neq \sqrt{p}\in W_p^{\rm ss}(2)} H_{sp}(\pi)=
  H_{sp}(\sqrt{p} \zeta_3)+H_{sp}(\sqrt{p}\zeta_{8})+
  H_{sp}(\sqrt{p}\zeta_{12}) \\
  &= 
  \begin{cases} 
  (\varpi_p+1) h(K_{p,3})+h(K_{2p,1})
  +h(K_{3p,3}), & \text{for $p\equiv 1 \pmod 4$;} \\
  h(K_{p,3})+h(K_{2p,1})+(\varpi_{3p}+1)
  h(K_{3p,3}), & \text{for $p\equiv 3 \pmod 4$.}    
  \end{cases}     
  \end{split}
\end{equation}

Let $\Delta(p)$ be the number of isomorphism classes of superspecial
abelian surfaces whose Frobenius endomorphism not equal to
$\pm\sqrt{p}$. Then we have
\begin{equation}
  \label{eq:Delta}
  \Delta(p)=\sum_{\pi\in W^{\rm ss}_p(2), \pi\neq \sqrt{p}}
  H_{sp}(\pi)+\sum_{\pi_1\times \pi_2, \pi_1\neq \pi_2}
  H_{sp}(\pi_1\times \pi_2)+\sum_{\pi_1\in W^{\rm ss}_p(1)}
  H_{sp}(\pi_1\times \pi_1). 
\end{equation}

Collecting the results (\ref{eq:total_self_product}),
 (\ref{eq:other_product}), (\ref{eq:simple2})
(\ref{eq:simple3}), (\ref{eq:simple5}) and (\ref{eq:simple}), 
we obtain the following result.
 
\begin{thm}\label{sp.2} \ 
  \begin{enumerate}
  \item The number $\Delta(p)$ is $15,20,9$ for $p=2,3,5$,
  respectively.
  \item For $p>5$ and $p\equiv 1 \pmod 4$, we have    
  \begin{equation}
    \label{eq:Delta_1mod4}
    \Delta(p)=(\varpi_p+1) h(K_{p,3})+h(K_{2p,1})
  +h(K_{3p,3})+h(\sqrt{-p}),
  \end{equation}
  where $\varpi_{p}$ is defined in (\ref{eq:varpi_d}).
  \item For $p>5$ and $p\equiv 3 \pmod 4$, we have 
  \begin{equation}
    \label{eq:Delta_3mod4}
   \Delta(p)=h(K_{p,3})+h(K_{2p,1})+(\varpi_{3p}+1)
   h(K_{3p,3})+\left(4-\left (\frac{2}{p}\right ) \right ) h(\sqrt{-p}),
  \end{equation}
  where $\varpi_{3p}$ is defined in (\ref{eq:varpi_d}).
  \end{enumerate}
\end{thm}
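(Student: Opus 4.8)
The plan is to read $\Delta(p)$ off from the master decomposition (\ref{eq:Delta}), which already splits $\Delta(p)$ into three sums: a sum of $H_{sp}(\pi)$ over the simple isogeny classes $\pi\in W^{\rm ss}_p(2)$ other than $\sqrt{p}$, a sum over non-self products $\pi_1\times\pi_2$ with $\pi_1\neq\pi_2$, and a sum over self products $\pi_1\times\pi_1$ with $\pi_1\in W^{\rm ss}_p(1)$. Every individual term on the right has already been evaluated earlier in the section, so the proof amounts to a careful bookkeeping exercise. First I would record the structural observation that, by the explicit lists in (\ref{simple_dimension_one}), the middle sum (non-self products) is nonempty only for $p=2$ and $p=3$; for all $p\ge 5$ it drops out, and only the first and third sums survive.

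Next I would dispose of the small primes $p=2,3,5$ by direct substitution. For $p=2$ I would insert the self-product total $3$ from (\ref{eq:total_self_product}), the cross-product total $8$ from (\ref{eq:other_product}), and the simple total $4$ from (\ref{eq:simple2}), obtaining $\Delta(2)=15$; for $p=3$ the three contributions are $5$, $12$, and $3$ by (\ref{eq:total_self_product}), (\ref{eq:other_product}) and (\ref{eq:simple3}), giving $\Delta(3)=20$; and for $p=5$ there is no cross-product term, the simple contribution (excluding $\sqrt{5}$) is $7$ by (\ref{eq:simple5}), and the surviving self product contributes $h(\sqrt{-5})=2$ by (\ref{eq:total_self_product}), giving $\Delta(5)=9$. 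This settles part (1).

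Finally, for $p>5$ I would simply add the two surviving general formulas: the simple contribution $\sum_{\pi\neq\sqrt{p}\in W^{\rm ss}_p(2)} H_{sp}(\pi)$ is given by the two congruence cases of (\ref{eq:simple}), and the self-product contribution reduces to the single term $H_{sp}(\sqrt{-p}\times\sqrt{-p})$, which by (\ref{eq:total_self_product}) equals $h(\sqrt{-p})$ when $p\equiv 1\pmod 4$ and $\left(4-\left(\frac{2}{p}\right)\right) h(\sqrt{-p})$ when $p\equiv 3\pmod 4$. Collecting terms in each congruence class then produces exactly the formulas (\ref{eq:Delta_1mod4}) and (\ref{eq:Delta_3mod4}).

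Since all the substantive work — determining the orders $\calR_{sp}$ and their overorders, verifying the Bass property, and evaluating the relevant class numbers via the generalized Eichler class number formula — has already been carried out in the earlier subsections, there is no real obstacle in this final assembly; the only thing to watch is which isogeny classes exist for which primes (the sporadic classes $\pm\sqrt{5}\zeta_5$ and $\pm\sqrt{2}\zeta_{24}$, the non-self products that occur only for $p=2,3$, and the explicit small-prime values that override the generic formulas when $p\le 5$).
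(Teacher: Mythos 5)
Your proposal is correct and follows essentially the same route as the paper: the paper's proof of Theorem~\ref{sp.2} is precisely the assembly of (\ref{eq:Delta}) from the previously computed contributions (\ref{eq:total_self_product}), (\ref{eq:other_product}), (\ref{eq:simple2}), (\ref{eq:simple3}), (\ref{eq:simple5}) and (\ref{eq:simple}), and your bookkeeping (including the observation that the non-self-product sum is empty for $p\ge 5$ and the small-prime checks $3+8+4=15$, $5+12+3=20$, $2+7=9$) matches it exactly.
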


Theorem~\ref{1.3} then follows from Theorems~\ref{1.2} and \ref{sp.2}.

\begin{remark}\label{sp.3}
  Based on our computation we observe that 
  the endomorphism ring of a
  superspecial abelian surface over $\Fp$ may be a non-maximal order, 
  or even non-maximal at $p$. For example, when $p=3$ and $\pi=\sqrt{3} \zeta_3$,
  the order $\calR_{sp}$, which occurs as the
  endomorphism ring of a superspecial abelian surface \cite[Theorem
  6.1]{waterhouse:thesis},  has index $3$ in the maximal order.
\end{remark}

\subsection{Asymptotic behavior of $|\Sp_2(\Fp)|$}
\label{sec:sp.3}


\def\Mass{\mathrm {Mass}}
We now determine the asymptotic behavior of the size of $\Sp_2(\Fp)$
as the prime $p$ goes to infinity.  For simplicity, let $F=\Q(\sqrt{p})$. 
By Theorem~\ref{1.3}, $|\Sp_2(\Fp)|$ is expressed as a 
linear combination of
$\zeta_F(-1) h(F)$, $h(\sqrt{-p})$, and class numbers of certain biquadratic CM
fields.  The term $c \zeta_F(-1) h(\sqrt{p})$ (for a suitable
constant $c$) comes from the contribution of 
the isogeny class corresponding to the Weil $p$-number
$\pi=\sqrt{p}$. More precisely, it arises from the mass part in the
Eichler class number formula for the calculation of $H(\sqrt{p})$. 
We recall from Theorem~\ref{1.2} that the mass part 
for $p>5$ is 
\begin{equation}
  \label{eq:mass}
  \Mass(p)=
  \begin{cases}
   \frac{1}{2} \zeta_F(-1) h(F) & \text{for $p\equiv 3 \pmod 4$;} \\
   8 \zeta_F(-1) h(F)  & \text{for $p\equiv 1 \pmod 8$;} \\
   \frac{1}{2}(15\varpi_p+1)\zeta_F(-1) h(F) & \text{for $p\equiv 5 \pmod
   8$}. \\  
  \end{cases}
\end{equation}
In \cite[Theorem 6.3.1]{xue-yang-yu:ECNF} we showed that the mass part
$\Mass(p)$ is the main term of $H(\sqrt{p})$. 
It is expected that $\Mass(p)$ is
also the main term of $|\Sp_2(\Fp)|$. This is true and we have the
following asymptotic formula for the size $\Sp_2(\Fp)$. 
      
\begin{prop}\label{sp.4}
  We have 
\[ \lim_{p\to \infty} \frac{|\Sp_2(\Fp)| 
 }{\Mass(p)}=1. \]  
\end{prop}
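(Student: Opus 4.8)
The plan is to show that every term in the formula for $|\Sp_2(\Fp)|$ coming from Theorem~\ref{1.3}, other than the mass term $\Mass(p)$ itself, is of strictly smaller order of magnitude as $p\to\infty$, so that dividing by $\Mass(p)$ sends each of them to $0$ and leaves the limit equal to $1$. First I would recall from Theorem~\ref{1.3} and Theorem~\ref{1.2} that
\[
|\Sp_2(\Fp)| = \Mass(p) + (\text{explicit } \Q\text{-linear combination of } h(F)\zeta_F(-1),\ h(\sqrt{-p}),\ h(K_{p,1}),\ h(K_{p,2}),\ h(K_{p,3}),\ h(K_{2p,1}),\ h(K_{3p,3})),
\]
where $F=\Q(\sqrt{p})$ and the coefficients are bounded independently of $p$ (the quantities $\varpi_p,\varpi_{3p}$ lie in $\{1,3\}$, and the Legendre-symbol factors are bounded). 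So it suffices to prove that each of $h(\sqrt{-p})$, $h(K_{p,j})$ for $j\in\{1,2,3\}$, $h(K_{2p,1})$, and $h(K_{3p,3})$ is $o\bigl(\zeta_F(-1) h(F)\bigr)$.

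The key input is a lower bound for $\zeta_F(-1)h(F)$ and upper bounds for the class numbers of the imaginary quadratic and biquadratic CM fields involved. By the analytic class number formula, $h(\sqrt{-p}) \ll \sqrt{p}\,\log p$, and for each biquadratic CM field $K$ appearing (which has discriminant a fixed power of $p$ times a constant), Brauer--Siegel or a direct Minkowski/Dirichlet estimate gives $h(K) \ll p\,(\log p)^{c}$ for an absolute constant $c$; more crudely, all of these are $O(p\log^2 p)$. On the other hand, Siegel's formula \eqref{eq:intro.35} shows $\zeta_F(-1) = \tfrac{1}{60}\sum_{b^2+4ac=\d_F,\ a,c>0} a$, and restricting to $b$ with $|b|\le \sqrt{\d_F}/2$ and $a=1$ already yields $\zeta_F(-1)\gg \d_F \gg p$; since $h(F)\ge 1$, we get $\Mass(p)\gg p^{1+\epsilon}$-type growth — more precisely $\Mass(p)\gg p\cdot h(F)$, and by Brauer--Siegel $h(F)\gg_\epsilon p^{1/2-\epsilon}$, so $\Mass(p)\gg_\epsilon p^{3/2-\epsilon}$, which already dominates $h(\sqrt{-p})\ll p^{1/2+\epsilon}$. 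For the biquadratic terms, which are only $O(p^{1+\epsilon})$, I would compare against $\Mass(p)\gg p^{3/2-\epsilon}$ directly, or — to avoid invoking ineffective Siegel bounds — note that $\zeta_F(-1)h(F)$ equals (up to a harmless rational constant) the mass of a suitable quaternionic Shimura set and grows like $p^{3/2}/\log p$ by the unconditional Brauer--Siegel bound combined with the $\gg p$ lower bound for $\zeta_F(-1)$. Either way, the ratio of each non-mass term to $\Mass(p)$ is $O(p^{-1/2+\epsilon})\to 0$.

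Assembling: write $|\Sp_2(\Fp)| / \Mass(p) = 1 + R(p)/\Mass(p)$ where $R(p)$ is the bounded-coefficient combination of the auxiliary class numbers; the estimates above give $R(p)/\Mass(p) = O(p^{-1/2+\epsilon})$, hence $\to 0$, proving the proposition. The main obstacle is making the lower bound for $\zeta_F(-1)h(F)$ clean and, if one wants an effective statement, avoiding the ineffectivity of the Siegel lower bound for $h(F)$; this is handled by observing that only the product $\zeta_F(-1)h(F)$ — not $h(F)$ alone — needs to be bounded below, and $\zeta_F(-1)\gg p$ unconditionally from Siegel's finite-sum formula \eqref{eq:intro.35} gives $\Mass(p)\gg p$ with $h(F)\ge 1$, which is already more than enough to kill the $O(p^{1+\epsilon})$ biquadratic terms once one also uses the trivial refinement $\zeta_F(-1)\gg p^{3/2}/\log^2 p$ available from the residue formula, or simply cites \cite[Theorem~6.3.1]{xue-yang-yu:ECNF} that $\Mass(p)$ is the main term of $H(\sqrt p)$ together with the fact that $H(\sqrt p) = H_{sp}(\sqrt p) \le |\Sp_2(\Fp)|$ to get the lower direction and the class-number bounds for the upper direction.
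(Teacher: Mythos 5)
Your overall strategy is the same as the paper's: write $|\Sp_2(\Fp)|=\Mass(p)+R(p)$ with $R(p)$ a bounded-coefficient combination of $h(\sqrt{-p})$ and the biquadratic class numbers $h(K_{d,j})$, and show each such term is $o(\Mass(p))$. The difference is in the analytic inputs. The paper gets the lower bound $\zeta_F(-1)>c_1\grd_F^{3/2}$ cleanly from the functional equation together with $\zeta_F(2)>1$ (so $\Mass(p)\gg p^{3/2}$ unconditionally, with $h(F)\ge 1$), and then bounds each $h(K_{d,j})=h^-(K_{d,j})h(\sqrt{d})$ from above by combining the Horie--Horie asymptotic for relative class numbers of CM fields (giving $h^-\ll p^{1/2+\epsilon}$ since $\grd_K/\grd_{K^+}\le 32p$) with Siegel's theorem for $h(\sqrt{d})\log\epsilon_d$ and the uniform lower bound $\epsilon_d\ge(1+\sqrt{5})/2$; this yields $h(K_{d,j})\ll p^{1+\epsilon}$, which the $p^{3/2}$ lower bound dominates. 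Your route instead uses crude Brauer--Siegel/Minkowski upper bounds $h(K)\ll p(\log p)^c$ (which are fine) and, for the lower bound, Siegel's finite-sum formula plus Brauer--Siegel for $h(F)$. Both work; the paper's version avoids any appeal to ineffective lower bounds for $h(F)$, which yours only reaches in the parenthetical alternatives.

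One concrete slip: restricting Siegel's sum \eqref{eq:intro.35} to $a=1$ gives only $\zeta_F(-1)\gg\sqrt{\grd_F}$, since each of the $\asymp\sqrt{\grd_F}$ admissible values of $b$ then contributes $a=1$ to the sum. To get $\zeta_F(-1)\gg\grd_F$ from the finite sum you should instead take $c=1$ and $a=(\grd_F-b^2)/4$ for a single small admissible $b$. With that correction your main chain ($\zeta_F(-1)\gg p$, $h(F)\gg_\epsilon p^{1/2-\epsilon}$, hence $\Mass(p)\gg_\epsilon p^{3/2-\epsilon}\gg p^{1+\epsilon}\gg h(K_{d,j})$) closes; as literally written, $\zeta_F(-1)\gg\sqrt{p}$ combined with $h(F)\gg_\epsilon p^{1/2-\epsilon}$ gives only $p^{1-\epsilon}$, which does not dominate the $O(p^{1+\epsilon})$ biquadratic terms. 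The cleanest fix is the paper's: use the functional equation to get $\zeta_F(-1)\gg\grd_F^{3/2}$ outright and dispense with Brauer--Siegel for $h(F)$ entirely.
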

\begin{proof}
It is enough to show that $\lim_{p\to
  \infty}h(\sqrt{-p})/h(F)\zeta_F(-1)=0$, and for all the biquadratic CM-fields
$K_{d,j}$ appearing in the formula of $|\Sp_2(\Fp)|$,  \[\lim_{p\to \infty}
h(K_{d,j})/h(F)\zeta_F(-1)=0.\] 
The above limit has been verified for the pairs $(d, j)$ with $d=p$ and
$j=1,2,3$ in \cite[Theorem~6.3.1]{xue-yang-yu:ECNF}, and it remains to
consider the pairs $(2p,1)$ and $(3p, 3)$. 

Recall that the
discriminant of $F$ is denoted by $\grd_F$, which is either $p$ or
$4p$. Using the function equation and the trivial inequality
$\zeta_F(2)>1$, we have
$\zeta_F(-1)>c_1 (\grd_F)^{3/2}$ for a constant $c_1>0$.  
For any CM-field $K$, let $h^{-}(K)$ be the relative class
number of $K$  defined as $h(K)/h(K^+)$, where $K^+$ is the maximal
totally real subfield of $K$. 
By \cite[Lemma~4]{Horie-Horie-1990}, when $K$ range over a sequence of
CM-fields with bounded degree and $\grd_K\to \infty$, we have 
\begin{equation}
  \label{eq:Horie-Horie}
\lim_{\grd_K\to \infty} (\log
  h^-(K))/(\log \sqrt{\grd_K/\grd_{K^+}})=1.  
\end{equation}
In particular, applying this to the quadratic imaginary fields
$\Q(\sqrt{-p})$, we obtain that $h(\sqrt{-p})/\zeta_F(-1)\to 0$ as $p\to \infty$. 

Assume $(d,j)=(2p, 1)$ or $(3p,3)$. One calculates that
$\grd_{K_{d,j}}/\grd_{K_{d,j}^+}\leq 32p$. Let $\epsilon_d$ be the
fundamental unit of the quadratic real field $\Q(\sqrt{d})$.  By Siegel's theorem \cite[Theorem
  15.4, Chapter 12]{MR665428}, the growth of
$h(K_{d,j}^+)=h(\sqrt{d})$ satisfies the following formula \[\lim_{d\to \infty} (\log
  h(\sqrt{d})\log\epsilon_d)/(\log \sqrt{d})=1.\]
Note that $\epsilon_d$ is bounded below by $(1+\sqrt{5})/2$ for all
$d$. Recall that $h(K_{d,j})=h^-(K_{d,j})h(\sqrt{d})$.  Combining these bounds yields that $h(K_{d,j})/\zeta_F(-1)\to 0$ as $p$ goes to infinity.
\end{proof}



\section{Galois cohomology of an arithmetic group}
\label{sec:gal_coh}

\subsection{Galois cohomology and conjugacy classes}
\label{sec:gal_coh.1} 
We refer to \cite[Section I.5]{Serre-Galois-coh} for the definition of
nonabelian Galois cohomology. Let $\Gamma_{\F_q}=\Gal(\Fqbar/\F_q)$ be
the absolute Galois group of $\F_q$, and $G$ a group with discrete
topology on which $\Gamma_{\F_q}$ acts continuously. Let $\sigma_q$ be the
arithmetic Frobenius automorphism of $\Fqbar$, which raises each element
of $\overline{\F}_q$ to its $q$-th power. The  group
$\Gamma_{\F_q}$ is isomorphic to the profinite group
$\widehat{\Z}=\varprojlim_{n\in \bbN}\zmod{n}$  with canonical
generator $\sigma_q$.  Each $1$-cocycle $(a_\sigma)_{\sigma\in \Gamma_{\F_q}}$ is uniquely
determined by its value $x=a_{\sigma_q}\in G$ at $\sigma_q$.  An element
of $G$ is called a \emph{$1$-cocycle element} if it arises from
a $1$-cocycle in this way. We will identify the set of $1$-cocycles
$Z^1(\Gamma_{\F_q}, G)$ with the subset of $1$-cocycle elements of $G$. Two $1$-cocycle elements $x, y\in Z^1(\Gamma_{\F_q},
G)$ define the same cohomology class if and only if they are 
$\sigma_q$-conjugate (denoted by $x\sim_{\sigma_q} y$), i.e., there exists
$z\in G$ such that $x=z^{-1}y\sigma_q(z)$. Write $[x]_{\sigma_q}$
for the $\sigma_q$-conjugacy
class of $x\in G$, and $B(G)$ for the set of all $\sigma_q$-conjugacy
classes of $G$.  Then 
\[ H^1(\Gamma_{\F_q}, G)= \{[x]_{\sigma_q}\in B(G)\mid
 x\in Z^1(\Gamma_{\F_q},G) \}\subseteq B(G).\]
If the action of $\Gamma_{\F_q}$ on $G$ is
trivial, then $B(G)$ is reduced to the set $\Cl(G)$ of (the usual)
conjugacy classes of $G$.  Define $\Cl_0(G):=\{[x]\in \Cl(G)\mid x \text{ is
  of finite order}\}\subseteq \Cl(G)$.


\begin{lemma}\label{lm:gal.1}
Assume that the action of $\Gamma_{\F_q}$ on $G$ factors through a finite
quotient $\Gal(\F_{q^N}/\F_q)$. We have 
\begin{gather*}
Z^1(\Gamma_{\F_q},G)=\{x\in G\mid x\sigma_q(x)\cdots\sigma_q^{N-1}(x) \text{ is of finite order}\,\}.
\end{gather*}
In particular, if the action of $\Gamma_{\Fq}$ on $G$ is trivial, then $H^1(\Gamma_{\F_q}, G)=\Cl_0(G)$.
\end{lemma}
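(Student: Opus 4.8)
The plan is to reduce Lemma~\ref{lm:gal.1} to a direct computation with the explicit formula for a cocycle evaluated at powers of $\sigma_q$. First I would record that, for a continuous $1$-cocycle $(a_\sigma)_{\sigma\in\Gamma_{\F_q}}$ with $x:=a_{\sigma_q}$, iterating the relation $a_{\sigma\tau}=a_\sigma\,\sigma(a_\tau)$ yields, for all integers $n\ge 0$,
\[
a_{\sigma_q^n}=x\,\sigma_q(x)\cdots\sigma_q^{n-1}(x),
\]
with the empty product ($n=0$) being the identity $e$. Since $G$ is discrete and $\Gamma_{\F_q}$ is profinite, any continuous map $\Gamma_{\F_q}\to G$ has finite image and factors through some finite quotient $\Gal(\F_{q^M}/\F_q)\cong\zmod{M}$; replacing $M$ by a suitable multiple, we may assume $N\mid M$, so that the action of $\Gamma_{\F_q}$ on $G$ likewise factors through $\zmod{M}$. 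Conversely, if the function $n\mapsto a_{\sigma_q^n}$ given by the displayed formula is periodic with some period $M$ divisible by $N$, then it descends to a $1$-cocycle on $\Gal(\F_{q^M}/\F_q)$, which pulls back to a continuous $1$-cocycle on $\Gamma_{\F_q}$ having value $x$ at $\sigma_q$. Hence $x\in Z^1(\Gamma_{\F_q},G)$ if and only if $n\mapsto a_{\sigma_q^n}$ admits a period divisible by $N$.

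The next step is the key computation, which uses crucially that $\sigma_q^N$ acts trivially on $G$. Put $P:=x\,\sigma_q(x)\cdots\sigma_q^{N-1}(x)=a_{\sigma_q^N}$. Grouping the defining product into consecutive blocks of length $N$ and pulling out the powers of $\sigma_q^N$ (which act as the identity on $G$) gives $a_{\sigma_q^{kN}}=P^k$ for every $k\ge 1$, and more generally
\[
a_{\sigma_q^{n+kN}}=a_{\sigma_q^n}\cdot\sigma_q^{n}\bigl(a_{\sigma_q^{kN}}\bigr)=a_{\sigma_q^n}\cdot\sigma_q^{n}\bigl(P^k\bigr)
\]
for all $n\ge 0$ and $k\ge 1$. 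Thus $kN$ is a period of $n\mapsto a_{\sigma_q^n}$ if and only if $P^k=e$, and such a period exists if and only if $P$ has finite order. Combined with the first paragraph this gives
\[
Z^1(\Gamma_{\F_q},G)=\{x\in G\mid x\,\sigma_q(x)\cdots\sigma_q^{N-1}(x)\text{ is of finite order}\},
\]
which is the first assertion.

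For the last statement, if the action of $\Gamma_{\F_q}$ on $G$ is trivial one may take $N=1$, so $P=x$ and the criterion becomes $Z^1(\Gamma_{\F_q},G)=\{x\in G\mid x\text{ is of finite order}\}$. With trivial action, the relation $x\sim_{\sigma_q}y$ (namely $x=z^{-1}y\,\sigma_q(z)$ for some $z\in G$) is ordinary conjugacy, and since the order of an element is a conjugacy invariant, passing to $\sigma_q$-conjugacy classes identifies $H^1(\Gamma_{\F_q},G)$ with $\Cl_0(G)$. I do not expect a genuine obstacle here; once the triviality of $\sigma_q^N$ on $G$ is exploited, the argument is bookkeeping. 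The subtlest point is the continuity argument of the first paragraph --- that $x$ arising from a \emph{continuous} cocycle on $\Gamma_{\F_q}$ is equivalent to periodicity of $n\mapsto a_{\sigma_q^n}$ with period divisible by $N$ --- which is exactly where the hypothesis that the action factors through $\Gal(\F_{q^N}/\F_q)$ is needed, in order to descend to and pull back from the finite quotient $\zmod{M}$.
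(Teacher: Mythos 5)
Your proof is correct; the paper itself gives no computation here and simply invokes Exercise~2 of Serre's \emph{Galois Cohomology}, Section~I.5.1, and your argument is precisely a worked-out solution of that exercise (determining a continuous cocycle on $\widehat{\Z}$ by its value at $\sigma_q$ and characterizing when the resulting partial products are periodic). In particular the key points you isolate --- the reduction to a finite quotient $\Z/M\Z$ with $N\mid M$ via continuity, and the identity $a_{\sigma_q^{n+kN}}=a_{\sigma_q^n}\cdot\sigma_q^{n}(P^k)$ using that $\sigma_q^N$ acts trivially --- are exactly what the cited exercise requires, so there is nothing to add.
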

\begin{proof}
  This follows directly from Exercise~2 of
  \cite[Section~I.5.1]{Serre-Galois-coh}. 
\end{proof}

\subsection{Abelian varieties over finite fields and twisted forms}\label{subsec:galois-descent}
Let $X_0$ be an abelian variety over $\F_q$  with Frobenius
endomorphism $\pi_{X_0}\in \End_{\F_q}(X_0)$. Set $\ol{X}_0=X_0\otimes
\Fqbar$, and $G=\Aut(\ol{X}_0)$. The Galois group
$\Gamma_{\F_q}$ acts on $\End(\ol{X}_0)$ as follows  (see \cite[Proposition
4.3]{yu:superspecial}) 
\begin{equation}
  \label{eq:gal.4}
  \sigma_q(x)=\pi_{X_0} x \pi_{X_0}^{-1}, \quad \forall\, x\in \End(\ol{X}_0),
\end{equation}
where the conjugation by $\pi_{X_0}$ is taken inside
$\End^0(\ol{X}_0)$. As $\End(\ol{X}_0)$ is a free $\Z$-module of
finite rank, the action of $\Gamma_{\F_q}$ factors
through a finite quotient $\Gal(\F_{q^N}/\F_q)$, and hence $(\pi_{X_0})^N$
is central in $\End(\ol{X}_0)$.  


Recall that an \emph{$(\Fqbar/\F_q)$-form} of
$X_0$ is an abelian varieties $X$ over $\F_q$ such that $\ol{X}:=X\otimes \Fqbar$ is $\Fqbar$-isomorphic to $\ol{X}_0$. 
Let $E(\Fqbar/\F_q, X_0)$ be the set of $\F_q$-isomorphism classes of $(\Fqbar/\F_q)$-forms of $X_0$. By
\cite[Section~III.1.3]{Serre-Galois-coh}, there is a canonical
bijection of pointed sets 
\begin{equation}
  \label{eq:5}
  \theta: E(\Fqbar/\F_q, X_0)\isoto H^1(\Gamma_{\Fq},G), 
\end{equation}
sending the $\F_q$-isomorphism class of $X_0$ to
the trivial class. 
The map $\theta$ is induced from mapping each $\Fqbar$-isomorphism $f: \ol{X}_0\to \ol{X}$
to the 1-cocycle element $x=f^{-1}\sigma_q(f)\in G$.   The injectivity of
$\theta$ follows purely from cohomological formalism, and the
surjectivity is a consequence of Weil's Galois descent. 

An isomorphism $f$ of abelian varieties as above induces an
isomorphism 
\begin{equation}
  \label{eq:isom-end-alg}
 \alpha_f:\End(\ol X)\simeq \End(\ol X_0),
\quad y\mapsto f^{-1} y f.   
\end{equation}
The Frobenius
endomorphisms $\pi_{X_0}$ and $\pi_X$
are related by the following
commutative diagram (see \cite[(4.2)]{yu:superspecial}):
\begin{equation}
  \label{eq:gal_coh.10}
  \begin{CD}
    \ol X_0 @>{f}>> \ol X \\
    @V{\pi_{X_0}}VV @VV{\pi_X}V \\
    \ol X_0 @>{\sigma_q(f)}>> \ol X.
  \end{CD}
\end{equation}
We compute
\begin{equation}
  \label{eq:6}
 \alpha_f(\pi_X)=f^{-1} \pi_X f=f^{-1} \sigma_q(f) \pi_{X_0} =x \pi_{X_0}.   
\end{equation}
   
Note that for $x, y, z\in G$, 
\[x=z^{-1}y\sigma_q(z)\  \Leftrightarrow\ 
x\pi_{X_0}=z^{-1}(y\pi_{X_0})z. \]
Hence there is a well-defined injective map 
\begin{equation}
  \label{eq:7}
\Pi:  B(G)\hookrightarrow \End(\ol{X}_0)/G, \quad
[x]_{\sigma_q}\mapsto [x\pi_{X_0}], 
\end{equation}
where $\End(\ol{X}_0)/G$ denotes orbits of
$\End(\ol{X}_0)$ under the right action of $G$ by conjugation. In a sense, the image of
$H^1(\Gamma_{\F_q}, G)$ under $\Pi$ consists of the conjugacy classes
of Frobenius endomorphisms of members of $E(\Fqbar/\F_q, X_0)$.

We can also work in the category of abelian varieties up to
isogeny and study the $(\Fqbar/\F_q)$-forms of
the isogeny class $[X_0]$. Thus we pass from isomorphisms of abelian
varieties to quasi-isogenies, and endomorphism rings  to endomorphism
algebras, etc.  Let $E^0(\Fqbar/\F_q,
[X_0])$ be the set of $\F_q$-isogeny classes of abelian varieties $[X]$
 such that $\ol{X}$ is isogenous to $\ol{X}_0$ over $\Fqbar$, and
 $G_\Q=\End^0(\ol{X}_0)^\times$.  Many previous constructions can
be carried over. In particular, both (\ref{eq:gal_coh.10}) and 
(\ref{eq:6}) hold true for any quasi-isogeny $f:
\ol{X}_0\to \ol{X}$, and one obtains a 1-cocycle element
$x=f^{-1}\sigma_q(f)\in G_\Q$ as before. This gives a canonical injective map 
\begin{equation}
  \label{eq:8}
\theta: E^0(\Fqbar/\F_q, [X_0])\hookrightarrow H^1(\Gamma_{\Fq},G_\Q),     
\end{equation}
which fits into a commutative diagram 
\begin{equation}
  \label{eq:9}
  \begin{CD}
    E(\Fqbar/\F_q, X_0) @>{\cong}>{\theta}> H^1(\Gamma_{\Fq},G) \\
    @VVV @VVV \\
    E^0(\Fqbar/\F_q, [X_0]) @>{\theta}>> H^1(\Gamma_{\Fq},G_\Q).
  \end{CD}
  \end{equation}
The left vertical map sends the $\F_q$-isomorphism class of $X$ to its
$\F_q$-isogeny class $[X]$, and the right vertical map is induced from the
inclusion of $\Gamma_{\F_q}$-groups $G\subset G_\Q$. Thus
 (\ref{eq:9}) endows a geometric
meaning for this cohomological map. 


We complete the picture by showing that the map $\theta$ in
(\ref{eq:8}) is surjective and thus a bijection of pointed sets as
stated in Theorem~\ref{thm:intro-descent-isog}. Recall that the action of $\Gamma_{\F_q}$ on
$\End^0(\ol X)$ factors through $\Gal(\F_{q^N}/\F_q)$ for a fixed
$N\in \bbN$. Without lose of generality, assume that $X_0$ is
$\F_{q^N}$-isotypical, i.e., $X_0\otimes\F_{q^N}$ is isogenous to $(Y_N)^d$, where $Y_N$
is an absolutely simple abelian variety over $\F_{q^N}$ with
$\End(Y_N)=\End(\ol Y_N)$. Equivalently, we assume
that the multiple Weil $q$-number
$\pi_{0,1}^{t_1}\times \dots \times \pi_{0,u}^{t_u}\in MW_q$
corresponding to the $\F_q$-isogeny class $[X_0]$ satisfies that
$\pi_{0,1}^N=\pi_{0,2}^N=\cdots= \pi_{0,u}^N$ after suitable
conjugation, and  $\Q((\pi_{X_0})^N)\subset \End^0(X_0)$ is a
field which coincides with $\Q((\pi_{X_0})^{sN})$
for all $s\in \bbN$.
Then $\End^0(\ol{X}_0)=\Mat_d(\End^0(\ol Y_N))$, and
$\End^0(\ol Y_N)$ is a central division algebra over $\Q((\pi_{X_0})^N)$.   For simplicity, let $D=\End^0(\ol Y_N)$ and
$K_0=\Q((\pi_{X_0})^N)$. Then $G_\Q=\End^0(\ol{X}_0)^\times=\GL_d(D)$.



\begin{lem}\label{lem:coh-to-conj-class}
  There is a bijection between $H^1(\Gamma_{\F_q}, G_\Q)$ and the following
  subset of conjugacy classes of $\Cl(G_\Q)$:
  \begin{equation}
    \label{eq:1-cocycle-cond}
 \mathscr{C}(\pi_{X_0})=\{[\ul\pi]\in \Cl(G_\Q)\mid \exists M\in \bbN: \
 \ul\pi^{NM}=\pi_{X_0}^{NM}\}\subset \Cl(G_\Q). 
  \end{equation}
\end{lem}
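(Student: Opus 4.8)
The plan is to identify $H^1(\Gamma_{\F_q}, G_\Q)$ with a set of \emph{ordinary} conjugacy classes of $G_\Q$ by twisting every $1$-cocycle element by the Frobenius endomorphism $\pi_{X_0}$, exactly as in the map $\Pi$ of \eqref{eq:7}, and then to determine which conjugacy classes arise by rewriting the cocycle condition of Lemma~\ref{lm:gal.1}.

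First I would recall the computation preceding \eqref{eq:7}: for $x,y,z\in G_\Q$ one has $x=z^{-1}y\,\sigma_q(z)\iff x\pi_{X_0}=z^{-1}(y\pi_{X_0})z$, so $x\mapsto \ul\pi:=x\pi_{X_0}$ carries $\sigma_q$-conjugacy to ordinary conjugacy. Since $\pi_{X_0}$ is a unit of $\End^0(\ol X_0)$, right multiplication by $\pi_{X_0}$ is a bijection of $G_\Q$ onto itself, so the induced map $B(G_\Q)\to \Cl(G_\Q)$, $[x]_{\sigma_q}\mapsto[x\pi_{X_0}]$, is a bijection. It then remains to see which classes in $\Cl(G_\Q)$ come from $H^1(\Gamma_{\F_q},G_\Q)\subseteq B(G_\Q)$.

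For this I would use Lemma~\ref{lm:gal.1}: $x$ is a $1$-cocycle element iff $x\,\sigma_q(x)\cdots\sigma_q^{N-1}(x)$ has finite order, where $N$ is chosen so that the action factors through $\Gal(\F_{q^N}/\F_q)$. Substituting $\sigma_q(x)=\pi_{X_0}x\pi_{X_0}^{-1}$ from \eqref{eq:gal.4}, a short telescoping computation gives $x\,\sigma_q(x)\cdots\sigma_q^{N-1}(x)=(x\pi_{X_0})^N\pi_{X_0}^{-N}$. Here is where the isotypicality reduction made just before the statement enters: $\pi_{X_0}^N$ lies in $K_0=\Q(\pi_{X_0}^N)$, the center of $D=\End^0(\ol Y_N)$, hence is a scalar in $\Mat_d(D)=\End^0(\ol X_0)$ and is central in $G_\Q$. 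Therefore $\bigl((x\pi_{X_0})^N\pi_{X_0}^{-N}\bigr)^M=(x\pi_{X_0})^{NM}\pi_{X_0}^{-NM}$, so $x$ is a cocycle element iff $(x\pi_{X_0})^{NM}=\pi_{X_0}^{NM}$ for some $M\in\bbN$; that is, iff $\ul\pi=x\pi_{X_0}$ satisfies the defining condition of $\mathscr{C}(\pi_{X_0})$ in \eqref{eq:1-cocycle-cond}.

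Finally I would note that this condition depends only on the conjugacy class of $\ul\pi$: conjugating $\ul\pi$ does not change $\ul\pi^{NM}$ because $\pi_{X_0}^{NM}$ is central. Combining with the bijection $B(G_\Q)\xrightarrow{\sim}\Cl(G_\Q)$ of the first step, the subset $H^1(\Gamma_{\F_q},G_\Q)$ maps bijectively onto $\mathscr{C}(\pi_{X_0})$, which is the assertion. The only step requiring genuine care is the telescoping identity together with the centrality of $\pi_{X_0}^N$; once these are in place the rest is formal manipulation with non-abelian $H^1$ over the procyclic group $\Gamma_{\F_q}$.
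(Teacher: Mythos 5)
Your proposal is correct and follows essentially the same route as the paper: the bijection $\Pi\colon B(G_\Q)\isoto\Cl(G_\Q)$, $[x]_{\sigma_q}\mapsto[x\pi_{X_0}]$, followed by the telescoping identity $x\sigma_q(x)\cdots\sigma_q^{N-1}(x)=(x\pi_{X_0})^N\pi_{X_0}^{-N}$ and Lemma~\ref{lm:gal.1}, with the centrality of $\pi_{X_0}^N$ converting the finite-order condition into $\ul\pi^{NM}=\pi_{X_0}^{NM}$. Your write-up is in fact slightly more explicit than the paper's about why $\pi_{X_0}^N$ is central and why the condition is conjugation-invariant, but the argument is the same.
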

\begin{proof}
 Since $\pi_{X_0}\in G_\Q$, the map $\Pi$ in (\ref{eq:7})
defines a bijection
\[\Pi: B(G_\Q)\isoto \Cl(G_\Q),\quad [x]_{\sigma_q}\mapsto [x\pi_{X_0}].  \]
Let $\pi_x=x\pi_{X_0}$ for each $x\in G_\Q$. Then
 \[x\sigma_q(x)\cdots
\sigma_q^{N-1}(x)=(x\pi_{X_0})^N(\pi_{X_0})^{-N}=(\pi_x)^N(\pi_{X_0})^{-N}. \]
By Lemma~\ref{lm:gal.1},  $x\in Z^1(\Gamma_{\F_q},G_\Q)$ if and only
if $(\pi_x)^N=(\pi_{X_0})^N\xi$ for some  $\xi\in G_\Q$ of
finite order, or equivalently, $\pi_x^{NM}=(\pi_{X_0})^{NM}$ for some $M\in \bbN$.
\end{proof}




Any $\ul{\pi}\in G_\Q$ with $[\ul{\pi}]\in \mathscr{C}(\pi_{X_0})$
is semisimple, as $\ul{\pi}^{NM}=(\pi_{X_0})^{NM}$ lies in the center of the simple
$\Q$-algebra $\End^0(\ol{X}_0)$.    The minimal polynomial of
$\ul{\pi}$ factorizes as a product of distinct irreducible
polynomials over $\Q$:
\begin{equation}
  \label{eq:factor-min-poly}
 P(t)=\prod_{i=1}^r P_i(t)\in \Q[t].
\end{equation}
For all $\ul \pi'$ in the conjugacy class $[\ul \pi]$, the $\Q$-subalgebra
$K_{\ul\pi'}:=\Q(\ul\pi')\subset \End^0(\ol{X}_0)$ is canonically
isomorphic to $K:=\Q[t]/(P(t))$ via the map $\ul \pi'\mapsto
t$.  Since
$\pi_{X_0}^{NM}=\ul\pi^{NM}$, the field
$K_0=\Q(\pi_{X_0}^{NM})$ can be identified with  the $\Q$-subalgebra of $K$
generated by $t^{NM}$, thus providing a
$K_0$-algebra structure on $K$. By (\ref{eq:factor-min-poly}), $K$
factorizes as a products of number fields
\begin{equation}
  \label{eq:factor-field} 
K=K_1\times \cdots \times K_r, \quad
\text{with}\quad  K_i=\Q[t]/(P_i(t))\supseteq K_0.
\end{equation}
By an abuse of notation, we regard $\pi_{X_0}^N$ as a Weil
$q^N$-number via a embedding $K_0\hookrightarrow \Qbar$. 
Then for each $1\leq i\leq r$, the roots of $P_i(t)$ in $\Qbar$ is a
conjugacy class of Weil $q$-numbers such that one of its
representative $\pi_i$ satisfies $\pi_i^{NM}=\pi_{X_0}^{NM}$.  Therefore,  given
$\ul\pi\in \mathscr{C}(\pi_{X_0})$, we find $r$
Weil $q$-numbers representing distinct conjugacy classes
\begin{equation}
  \label{eq:Weil-num-from-cohomology}
  \{\pi_1, \cdots, \pi_r\}\quad \text{with}  \quad
  \pi_i^{NM}=\pi_{X_0}^{NM}\quad \text{for some } M\in \bbN \text{ and
    all } 1\leq i \leq r. 
\end{equation}

Next, we fix $P(t)\in \Q[t]$ as above, and produce a discrete invariant for every conjugacy class
$[\ul\pi]\in \mathscr{C}(\pi_{X_0})$ with minimal polynomial
$P(t)$.  Let $V=D^d$ be the right
vector space over $D$ of column vectors.  Then $\End_D(V)=\Mat_d(D)$
acts on $V$ from the left by the usual matrix multiplication. We have
a canonical $K_0$-algebra embedding $K\embed \End_D(V)$ sending $K$ to
$K_\pi$.  Thus $\ul\pi$ endows a
faithful $(K,D)$-bimodule structure on $V$, denoted by
$V_{\ul\pi}$. 
By (\ref{eq:factor-field}), there is a decomposition of $V$ into right $D$-subspaces:
\begin{equation}
  \label{eq:bimodule-decomposition}
 V=\bigoplus_{i=1}^r V_i, \quad d_i=\dim_D V_i.   
\end{equation}
The action of $K_i$ on $V_i$ gives rise to a $K_0$-embedding
$K_i\embed \End_D(V_i)=\Mat_{d_i}(D)$.
We study each of the embeddings individually first.

\begin{lem}\label{lem:minimal-embedding}
Let $\pi\in W_q$ be a Weil $q$-number such
that $\pi^{NM}=\pi_{X_0}^{NM}$ for some integer $M\in \bbN$, and
$X_\pi$ a simple abelian variety over $\F_q$ in the isogeny class corresponding to
$\pi$. Let $e=e(\pi)$ be the smallest integer such that there is an
$K_0$-embedding $\Q(\pi)\embed \Mat_e(D)$. Then $\ol
X_\pi=X_\pi\otimes \Fqbar$ is
isogenous to  $(\ol Y_N)^e$,  and $\End^0(X_\pi)$ is isomorphic to the
centralizer $C_\pi$ of $\Q(\pi)$ in $\Mat_e(D)$. 
  \end{lem}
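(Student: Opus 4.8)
The plan is to compare Frobenius endomorphisms after a base change and then translate the minimality in the definition of $e(\pi)$ into module theory over a central simple algebra. I would work in the tower $\F_q\subset\F_{q^N}\subset\F_{q^{NM}}\subset\Fqbar$. Since $X_0\otimes\F_{q^N}$ is isogenous to $Y_N^d$ and the Frobenius of $X_0\otimes\F_{q^N}$ is $\pi_{X_0}^N$, after replacing the Frobenius of $Y_N$ by a conjugate we may assume it equals $\pi_{X_0}^N$ inside $\Qbar$. Then the Frobenius of $Y_N\otimes\F_{q^{NM}}$ is $(\pi_{X_0}^N)^M=\pi_{X_0}^{NM}$, while that of $X_\pi\otimes\F_{q^{NM}}$ is $\pi^{NM}$; these agree by hypothesis. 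Now $Y_N\otimes\F_{q^{NM}}$ is simple, being the base change of an absolutely simple abelian variety, so it is the simple abelian variety over $\F_{q^{NM}}$ attached to the Weil number $\pi^{NM}$. On the other hand $X_\pi$ is simple over $\F_q$, so the minimal polynomial of the Frobenius of $X_\pi\otimes\F_{q^{NM}}$ has all its roots conjugate to $\pi^{NM}$; hence $X_\pi\otimes\F_{q^{NM}}$ is isotypic with simple factor $Y_N\otimes\F_{q^{NM}}$. Thus $X_\pi\otimes\F_{q^{NM}}\sim(Y_N\otimes\F_{q^{NM}})^s$ for a unique integer $s\ge 1$, and tensoring with $\Fqbar$ gives $\ol X_\pi\sim\ol Y_N^s$.

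\textbf{Step 2 (the endomorphism algebra as a centralizer).} From $\ol X_\pi\sim\ol Y_N^s$ we get $\End^0(\ol X_\pi)\cong\Mat_s(D)$, a central simple $K_0$-algebra. By the formula (\ref{eq:gal.4}) for the Galois action, $\End(X_\pi)$ is the centralizer of $\pi$ in $\End(\ol X_\pi)$, hence $\End^0(X_\pi)=C_{\Mat_s(D)}(\Q(\pi))$. Moreover $\Q(\pi)$ sits inside $\Mat_s(D)$ as a $K_0$-subalgebra, because $\pi^{NM}$ is sent to $\pi_{X_0}^{NM}$, a generator of the center $K_0$. This exhibits a $K_0$-embedding $\Q(\pi)\embed\Mat_s(D)$, so $e(\pi)\le s$, and it identifies $\End^0(X_\pi)$ with $C_\pi$ once we know $s=e(\pi)$.

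\textbf{Step 3 (minimality).} It remains to prove $s\le e(\pi)$. A $K_0$-embedding $\Q(\pi)\embed\Mat_e(D)$ is the same datum as a nonzero module $W$ over the algebra $\Q(\pi)\otimes_{K_0}D^{\op}$ which is free of rank $e$ as a right $D$-module; for such a $W$ one has $\End_{\Q(\pi)\otimes_{K_0}D^{\op}}(W)=C_{\End_D(W)}(\Q(\pi))$ with $\End_D(W)\cong\Mat_e(D)$. Since $\Q(\pi)\otimes_{K_0}D^{\op}$ is a central simple algebra over the field $\Q(\pi)$, it has a unique simple module $S_0$ up to isomorphism, and every module is a power of $S_0$. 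For the embedding of Step 2 the corresponding module $V$ has endomorphism algebra $C_{\Mat_s(D)}(\Q(\pi))=\End^0(X_\pi)$, which is a division algebra by the Honda--Tate theory; hence $V\cong S_0$ and $\dim_{K_0}S_0=\dim_{K_0}V=s\,[D:K_0]$. For an arbitrary embedding $\Q(\pi)\embed\Mat_e(D)$ the corresponding module is $S_0^n$ for some $n\ge 1$, whence $e\,[D:K_0]=n\dim_{K_0}S_0=ns\,[D:K_0]$ and $e=ns\ge s$. Therefore $e(\pi)=s$, which gives $\ol X_\pi\sim\ol Y_N^e$ and, with Step 2, $\End^0(X_\pi)\cong C_\pi$.

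\textbf{Expected obstacle.} The delicate point is Step 1: one must track carefully over which finite field each abelian variety and each Frobenius power lives, and it is precisely the simplicity of $X_\pi$ over $\F_q$ (together with Honda--Tate) that forces $X_\pi\otimes\F_{q^{NM}}$ to be isotypical with the expected simple factor. Step 3 is formally short but relies on the structure theory of modules over central simple algebras; there I would double-check the identification of $\End_{\Q(\pi)\otimes_{K_0}D^{\op}}(W)$ with the centralizer of $\Q(\pi)$ inside $\End_D(W)$ and the convention $\End_D(D^e)\cong\Mat_e(D)$.
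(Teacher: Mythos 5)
Your proposal is correct and follows essentially the same route as the paper: establish the isogeny $\ol X_\pi\sim(\ol Y_N)^s$, identify $\End^0(X_\pi)$ with the centralizer of $\Q(\pi)$ via the Galois action (\ref{eq:gal.4}), and deduce minimality of $s$ from the fact that $C_\pi=\End^0(X_\pi)$ is a division algebra, viewed through the $(\Q(\pi),D)$-bimodule structure on $D^s$. Your Steps 1 and 3 simply spell out in detail what the paper's proof asserts more tersely (the existence of the isogeny, and the "division algebra implies simple bimodule implies minimal rank" argument), so there is no substantive difference.
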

  \begin{proof}
    Since $\pi^{NM}=\pi_{X_0}^{NM}$, there exists an isogeny $\ol X_\pi\to
    (\ol Y_N)^e$ for some $e\in \bbN$, which gives an identification of $\End^0(\ol X_\pi)$ with $\Mat_e(D)=\End^0((\ol Y_N)^e)$
    in the same way as (\ref{eq:isom-end-alg}).  Thus we obtain a $K_0$-embedding $\Q(\pi)\embed \Mat_e(D)$, and $\End^0(X_\pi)$ is
    recovered as the $\Gamma_{\F_q}$-invariants of $\End^0(\ol
    X_\pi)$, or equivalently, the centralizer $C_\pi$ of $\Q(\pi)$ in
    $\Mat_e(D)$ by (\ref{eq:gal.4}).  On the other hand, $C_\pi$ is also the endomorphism
    algebra of the $(\Q(\pi), D)$-bimodule $D^e$. Now the minimality
    of $e$ follows from the fact that $C_\pi=\End^0(X_\pi)$ is a
    division algebra. 
  \end{proof}
Given $e'\in \bbN$,  a $K_0$-embedding of $\Q(\pi)$ into the simple
  algebra $\Mat_{e'}(D)$ exist if and only if $e(\pi)$ divides $e'$.
  Therefore, every $d_i$ in
  (\ref{eq:bimodule-decomposition}) is of the form $m_i e(\pi_i)$
  for some positive integer $m_i\in \bbN$ subjecting to the condition
\begin{equation}
  \label{eq:admissible-cond}
  m_1 e(\pi_1)+\dots+m_r e(\pi_r)=d.
\end{equation}
We shall call the $r$-tuple 
$\ul m=(m_1,\dots, m_r)$ the \emph{type} of the $(K, D)$-bimodule
$V_{\ul \pi}$ or simply the \emph{type} of $\ul \pi$. 

\begin{lem}\label{lem:bijections-str}
  There are natural bijections between the following sets:
  \begin{enumerate}
  \item  the set of conjugacy classes
$[\ul\pi]\in \mathscr{C}(\pi_{X_0})$ with minimal polynomial
$P(t)$;
\item  the set of $G_\Q$-conjugacy classes of $K_0$-embedding
  $K\embed \End_D(V)$;
\item the set of isomorphism classes of faithful $(K,D)$-bimodule structures on $V$;
\item the set of $r$-tuples 
$\ul m=(m_1,\dots, m_r)\in \bbN^r$ satisfying (\ref{eq:admissible-cond}). 
  \end{enumerate}
\end{lem}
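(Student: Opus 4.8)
The plan is to establish the chain of bijections $(1)\Leftrightarrow(2)\Leftrightarrow(3)\Leftrightarrow(4)$, each essentially by unwinding definitions, so that the composite bijection $(1)\Leftrightarrow(4)$ is what feeds into the surjectivity of $\theta$ in Theorem~\ref{thm:intro-descent-isog}. The equivalence $(1)\Leftrightarrow(2)$ comes from the construction already set up above: sending $\ul\pi$ (with minimal polynomial $P(t)$) to the $K_0$-algebra embedding $K=\Q[t]/(P(t))\embed \End_D(V)=\Mat_d(D)$ determined by $t\mapsto \ul\pi$, i.e.\ to the action of $K$ on $V_{\ul\pi}$. Conjugating $\ul\pi$ by $g\in G_\Q=\GL_d(D)$ conjugates the embedding by $g$, and conversely two embeddings with image generated by elements of the single conjugacy class $[\ul\pi]$ differ by a $G_\Q$-conjugation precisely because any two roots of $P(t)$ generating isomorphic subalgebras do; so the assignment descends to a bijection on the indicated sets of conjugacy classes. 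One must check that every $K_0$-embedding $K\embed \Mat_d(D)$ arises this way, which is clear: the image of $t$ is a semisimple element of $\Mat_d(D)$ with minimal polynomial $P(t)$, and $\ul\pi^{NM}=(\pi_{X_0})^{NM}$ holds because $t^{NM}$ lies in $K_0$ by construction of the $K_0$-structure on $K$.

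For $(2)\Leftrightarrow(3)$: a $K_0$-embedding $K\embed \End_D(V)$ is literally the same datum as a left $K$-module structure on $V$ commuting with the right $D$-action, i.e.\ a faithful $(K,D)$-bimodule structure on the underlying $D$-space $V=D^d$ (faithfulness being automatic since $K\embed \End_D(V)$ is injective). Two such structures are isomorphic as bimodules iff they are carried to one another by an element of $\mathrm{Aut}_D(V)=\GL_d(D)=G_\Q$, which is exactly $G_\Q$-conjugacy of the embeddings. The step $(3)\Leftrightarrow(4)$ is the one that carries actual content and is the main obstacle: one has to classify faithful $(K,D)$-bimodules whose underlying right $D$-module is $D^d$. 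Using the decomposition $K=K_1\times\cdots\times K_r$, a $(K,D)$-bimodule splits canonically as $\bigoplus_i V_i$ where $V_i$ is a $(K_i,D)$-bimodule, and $V_i$ is determined up to isomorphism by $d_i=\dim_D V_i$ together with the constraint that a $K_0$-embedding $K_i\embed\Mat_{d_i}(D)$ exists. By Lemma~\ref{lem:minimal-embedding} such an embedding exists iff $e(\pi_i)\mid d_i$, and when it does the $(K_i,D)$-bimodule of $D$-dimension $m_i e(\pi_i)$ is the $m_i$-fold direct sum of the simple one (uniqueness follows from $C_{\pi_i}=\End^0(X_{\pi_i})$ being a division algebra, so that $D^{e(\pi_i)}$ with its $K_i$-action is the unique simple faithful $(K_i,D)$-bimodule, and Krull--Schmidt gives the decomposition into isotypic pieces). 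Hence $V_{\ul\pi}$ is determined up to isomorphism by the tuple $\ul m=(m_1,\dots,m_r)$, and the condition $\sum_i \dim_D V_i=d$ becomes $\sum_i m_i e(\pi_i)=d$, which is (\ref{eq:admissible-cond}); conversely any such $\ul m$ yields a bimodule by taking $\bigoplus_i (D^{e(\pi_i)})^{m_i}$ with the corresponding $K_i$-actions.

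The delicate point to get right is the uniqueness claim — that over the (possibly noncommutative, possibly division) $\Q$-algebra $D$, a $(K_i,D)$-bimodule with $D$-dimension a fixed multiple of $e(\pi_i)$ is unique up to isomorphism. This is where I would invoke, carefully, that $C_{\pi_i}=\End^0(X_{\pi_i})$ is a division algebra (Lemma~\ref{lem:minimal-embedding}): the category of $(K_i,D)$-bimodules is the category of right modules over $\End_D(D^{e(\pi_i)})^{\mathrm{op}}$ restricted along $K_i$, and the simple object $D^{e(\pi_i)}$ has endomorphism algebra $C_{\pi_i}$, a division algebra, so every finitely generated object is a finite direct sum of copies of it and the multiplicity is an isomorphism invariant. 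One also needs that all the relevant objects are finitely generated over $D$, which is automatic here since $V$ is finite-dimensional. Everything else is bookkeeping: tracking that the $K_0$-structure is respected throughout (so that the condition $\ul\pi^{NM}=(\pi_{X_0})^{NM}$ is equivalent to compatibility with the fixed embedding $K_0\embed\Qbar$ and hence to the $\pi_i$ being genuine Weil $q$-numbers as in (\ref{eq:Weil-num-from-cohomology})), and that the bijections are natural, i.e.\ independent of auxiliary choices such as the isogeny $\ol X_0\sim (\ol Y_N)^d$.
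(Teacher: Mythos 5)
Your proposal is correct, and for the bijections $(1)\leftrightarrow(2)$ and $(2)\leftrightarrow(3)$ it follows exactly the paper's route: send a conjugacy class $[\ul\pi]$ to the $G_\Q$-conjugacy class of the embedding $t\mapsto\ul\pi$, and observe that a $K_0$-embedding $K\embed\End_D(V)$ is the same datum as a faithful $(K,D)$-bimodule structure on $V$, with $G_\Q$-conjugacy matching bimodule isomorphism. The one place where you genuinely diverge is the remaining bijection with $(4)$: the paper simply states that the proof of $(2)\leftrightarrow(4)$ is ``similar to that of [shih--yang--yu, Proposition~3.2]'' and omits it, whereas you supply a self-contained argument through $(3)$, decomposing $V=\bigoplus_i V_i$ along $K=\prod_i K_i$ and using that each $V_i$ is a module over the simple algebra generated by $K_i$ and $D^{\op}$, so that it is a multiple of the unique simple $(K_i,D)$-bimodule $D^{e(\pi_i)}$ (whose endomorphism algebra $C_{\pi_i}$ is a division algebra by Lemma~\ref{lem:minimal-embedding}), with the multiplicity $m_i$ an isomorphism invariant. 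This correctly recovers the type $\ul m$ and the constraint $\sum_i m_i e(\pi_i)=d$, including the faithfulness condition $m_i\geq 1$. What your version buys is that the lemma becomes independent of the external citation; the cost is only the extra bookkeeping you already flag (finite generation, well-definedness of $e(\pi_i)$ as the dimension of the simple bimodule), all of which is standard and consistent with the paper's surrounding setup.
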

\begin{proof}
The bijection between between (1) and (2) is established by the map sending each $K_0$-embedding
  $\phi: K=\Q[t]/(P(t))\embed \End_D(V)$ to $\pi=\phi(t)$.   Every faithful $(K,D)$-bimodule structure on $V$ is given by a
  $K_0$-embedding $\phi: K\embed \End_D(V)$. Two
  such embeddings define isomorphic structures if and only if they are
  conjugate by an element of $G_\Q$.  Hence (2) is bijective
  to (3). 

  The proof that (2) is bijective to (4) is similar to that of \cite[Proposition~3.2]{shih-yang-yu}
  and is omitted.
\end{proof}



 \begin{thm}\label{thm:galois-descent-isog-classes}
Each cohomology class $[x]_{\sigma_q}\in H^1(\Gamma_{\F_q},G_\Q)$
determines a unique conjugacy class of multiple
  Weil $q$-number $\pi_1^{m_1}\times \cdots \times \pi_r^{m_r}\in
  MW_q$ such 
  that 
\begin{itemize}
\item $\pi_i^{NM}=\pi_{X_0}^{NM}$ for some
  $M\in \bbN$ and all $1\leq i \leq r$; 
\item $\ul m=(m_1,\dots, m_r)\in \bbN^r$ satisfies
 (\ref{eq:admissible-cond}). 
  \end{itemize} 
In particular, the map $\theta$ in (\ref{eq:8}) is  a bijection  of pointed sets. 
 \end{thm}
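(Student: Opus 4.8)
The plan is to assemble Lemmas~\ref{lem:coh-to-conj-class}, \ref{lem:minimal-embedding} and \ref{lem:bijections-str} into a single chain of maps. First I would fix a class $[x]_{\sigma_q}\in H^1(\Gamma_{\F_q},G_\Q)$ and set $\ul\pi:=x\pi_{X_0}$, which by Lemma~\ref{lem:coh-to-conj-class} lies in $\mathscr{C}(\pi_{X_0})$. As recorded just before the statement, $\ul\pi$ is semisimple, its minimal polynomial $P(t)=\prod_{i=1}^r P_i(t)$ is a product of distinct monic irreducibles over $\Q$, the roots of each $P_i$ form a full conjugacy class of Weil $q$-numbers, and one may pick representatives $\pi_i$ and a common $M\in\bbN$ with $\pi_i^{NM}=\pi_{X_0}^{NM}$ for all $i$ (see~(\ref{eq:Weil-num-from-cohomology})); the $\pi_i$ are pairwise non-conjugate because the $P_i$ are distinct, which gives the first bullet. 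Next I would read off the type: the decomposition~(\ref{eq:bimodule-decomposition}) attached to $\ul\pi$ together with Lemma~\ref{lem:minimal-embedding} writes each $d_i=m_ie(\pi_i)$ with $m_i\in\bbN$, and $m_i\ge 1$ since $P_i$ genuinely occurs in $P$ (so $V_i\neq 0$); hence $\ul m=(m_1,\dots,m_r)$ satisfies~(\ref{eq:admissible-cond}). This produces a multiple Weil $q$-number $\pi_1^{m_1}\times\cdots\times\pi_r^{m_r}\in MW_q$ meeting both bullets.

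Then I would organize these maps into a bijection, which also yields the uniqueness in the first assertion. By Lemma~\ref{lem:coh-to-conj-class}, $H^1(\Gamma_{\F_q},G_\Q)\cong\mathscr{C}(\pi_{X_0})$, and $\mathscr{C}(\pi_{X_0})$ is stratified by the minimal polynomial $P(t)$ of a representative. For each fixed $P(t)$, Lemma~\ref{lem:bijections-str} identifies the classes with that minimal polynomial with the $r$-tuples $\ul m$ obeying~(\ref{eq:admissible-cond}); on the other side, $P(t)$ and its factorization~(\ref{eq:factor-field}) determine the conjugacy classes of the Weil $q$-numbers $\pi_i$ (the roots of $P_i$), for which the first bullet is automatic because $\ul\pi^{NM}=\pi_{X_0}^{NM}$ is central. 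Since the algebra $K=\Q[t]/(P(t))$, its factors $K_i$, the classes of the $\pi_i$, and — by Lemma~\ref{lem:bijections-str} — the type $\ul m$ all depend only on the conjugacy class $[\ul\pi]$, the assignment $[x]_{\sigma_q}\mapsto\pi_1^{m_1}\times\cdots\times\pi_r^{m_r}$ is well defined, and composing the above identifications it is a bijection from $H^1(\Gamma_{\F_q},G_\Q)$ onto the set of all $\pi\in MW_q$ satisfying the two bullets.

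For the final clause it remains to match this bijection with $\theta^{-1}$. Injectivity of $\theta$ in~(\ref{eq:8}) is the cohomological formalism already noted, so I would only prove surjectivity. Given $[x]_{\sigma_q}$, let $X:=X_\pi$ be the abelian variety over $\F_q$ attached by Honda--Tate to the associated $\pi=\pi_1^{m_1}\times\cdots\times\pi_r^{m_r}$, so $X\sim\prod_i X_{\pi_i}^{m_i}$. By Lemma~\ref{lem:minimal-embedding} each $\ol X_{\pi_i}$ is isogenous to $(\ol Y_N)^{e(\pi_i)}$ over $\Fqbar$, whence $\ol X\sim\prod_i(\ol Y_N)^{m_ie(\pi_i)}=(\ol Y_N)^d\sim\ol X_0$ by~(\ref{eq:admissible-cond}); thus $[X]\in E^0(\Fqbar/\F_q,[X_0])$. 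Choosing a quasi-isogeny $f\colon\ol X_0\to\ol X$, formula~(\ref{eq:6}) shows that $\Pi(\theta([X]))$ is the $G_\Q$-conjugacy class of $\alpha_f(\pi_X)$, which has minimal polynomial $P(t)$ and, by tracking the $K_i$-isotypic parts of $\ol X_\pi$ through $f$, type $\ul m$; by Lemma~\ref{lem:bijections-str} this class equals $[\ul\pi]=\Pi([x]_{\sigma_q})$, and since $\Pi$ is injective, $\theta([X])=[x]_{\sigma_q}$. Hence $\theta$ is a bijection of pointed sets.

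The step I expect to be the real obstacle is this last matching: checking that the Frobenius $\alpha_f(\pi_X)$ of the Honda--Tate variety $X_\pi$, transported into $G_\Q$ via an isogeny $\ol X_0\sim\ol X_\pi$, carries not merely the minimal polynomial $P(t)$ but the full type $\ul m$. That forces one to follow the $\pi_i$-isotypic decomposition of $\ol X_\pi$ across $f$ and compare it dimension by dimension with~(\ref{eq:bimodule-decomposition}), the minimality of $e(\pi_i)$ in Lemma~\ref{lem:minimal-embedding} being precisely what pins down $d_i=m_ie(\pi_i)$; the remaining verifications are bookkeeping inside the bijections of Lemma~\ref{lem:bijections-str}.
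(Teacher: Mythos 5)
Your proposal is correct and follows essentially the same route as the paper: both construct the multiple Weil $q$-number from the minimal polynomial and type of $\pi_x=x\pi_{X_0}$ via Lemma~\ref{lem:coh-to-conj-class} and (\ref{eq:Weil-num-from-cohomology}), and both prove surjectivity of $\theta$ by taking $X=\prod_i X_{\pi_i}^{m_i}$, transporting its Frobenius through a quasi-isogeny $f$ via (\ref{eq:6}), and invoking Lemma~\ref{lem:bijections-str} to conclude that $\alpha_f(\pi_X)$ and $\pi_x$ are conjugate. The step you flag as the real obstacle — verifying that $\alpha_f(\pi_X)$ has the same type $\ul m$, not merely the same minimal polynomial — is exactly the point the paper dispatches with the phrase ``by the construction,'' so your more explicit bookkeeping there only makes the argument more complete.
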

\begin{proof}
Given $[x]_{\sigma_q}\in H^1(\Gamma_{\F_q},G_\Q)$, we produce the
desired multiple Weil $q$-number by combing the type $\ul m=(m_1,
\cdots, m_r)$ of
$[\pi_x]\in \mathscr{C}(\pi_{X_0})$ and the set $\{\pi_1, \cdots, \pi_r\}$ determined by $[\pi_x]$
as in (\ref{eq:Weil-num-from-cohomology}).  Let $X=\prod_{i=1}^r (X_{\pi_i})^{m_i}$ be an abelian variety over
$\F_q$ corresponding to $\pi_1^{m_1}\times \cdots \times \pi_r^{m_r}$. Then $\ol X$ is
isogenous to $\ol{X}_0$ by Lemma~\ref{lem:minimal-embedding} and
(\ref{eq:admissible-cond}). Identify $\End^0(\ol X)$ with $\End^0(\ol X_0)$
via an isogeny $f: \ol{X}_0\to \ol{X}$ as in
(\ref{eq:isom-end-alg}). The conjugacy class of $\alpha_f(\pi_X)\in
G_\Q$ is independent of the choice of $f$. By the construction,
$\alpha_f(\pi_X)$ is a semisimple element with the same minimal
polynomial and type as $\pi_x=x\pi_{X_0}$.  It follows from
Lemma~\ref{lem:bijections-str} that they must lie in the same conjugacy class of $G_\Q$. We conclude that 
$\theta$ is surjective by Lemma~\ref{lem:coh-to-conj-class}. 
 \end{proof}

\subsection{Superspecial abelian varieties and the parity property}
\label{sec:parity}

We apply the previous construction to the study of superspecial
abelian varieties over finite fields. Let $E_0$ be a supersingular elliptic curve
over the prime finite field $\Fp$ 
whose Frobenius endomorphism $\pi_0$ satisfying
$\pi_0^2+p=0$ (Recall that $\sqrt{-p}\in W_p^{\rm ss}(1)$ for all $p$ by
Proposition~\ref{prop:dim.5}). Let $\calO:=\End(E_0\otimes \Fpbar)$ be
the endomorphism 
ring of $E_0\otimes \Fpbar$; this is a maximal order in the unique
quaternion $\Q$-algebra $D=B_{p,\infty}$ ramified exactly 
at $\{p,\infty\}$. 
Take $X_0=E_0^d$ and $\ol X_0:=X_0 \otimes \Fpbar$ for $d\ge 1$. 
Then $\End(\ol X_0)=\Mat_d(\calO)$. 
In what follows we denote by
\[ G:=\Aut(\ol X_0)=\GL_d(\calO) \] 
the automorphism group of $\ol X_0$.   Consider $\calO$ as a subring of $\Mat_d(\calO)$ by the diagonal
  embedding and view $\pi_0$ as an element in $\Mat_d(\calO)$. Then
  the action of $\Gamma_{\Fp}$ on $G=\GL_d(\calO)$ is given by
\begin{equation}
 \label{eq:gal-act-ss}
  \sigma_p(x)=\pi_0 x \pi_0^{-1}, \quad x\in G.
\end{equation}
We will also write $G_\Q$ for the group $\GL_d(D)$. 




Recall that $\Sp_d(\Fq)$ denotes the set of isomorphism classes of
 $d$-dimensional superspecial abelian varieties over $\Fq$. For the
classification of superspecial abelian varieties over the algebraic
closure $\overline{\F}_q$ of $\Fq$, 
we have the following result, due to Deligne, Shioda and Ogus
(cf.~\cite[Section 1.6, p.~13]{li-oort}). 

\begin{thm}\label{gal.sp}
For any integer $d\ge 2$, 
there is only one isomorphism class of $d$-dimensional superspecial
abelian varieties over any \ac  field of \ch $p>0$. 
\end{thm}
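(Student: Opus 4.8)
The plan is to reduce the statement to a purely arithmetic fact about modules over a maximal order in the quaternion algebra $B_{p,\infty}$, and then to invoke Eichler's theorem (equivalently, strong approximation). First I would reduce to the base field $\Fpbar$: a superspecial abelian variety $X$ over an algebraically closed field $k$ of characteristic $p$ is, by definition, $k$-isomorphic to a product $E_1\times\cdots\times E_d$ of supersingular elliptic curves over $k$, and since each $j(E_i)$ lies in $\F_{p^2}\subseteq\Fpbar\subseteq k$ every $E_i$ --- hence $X$ --- descends to $\Fpbar$; moreover $\Hom$-groups of abelian varieties do not change under extensions of algebraically closed base fields, so two superspecial abelian varieties over $k$ are isomorphic iff their models over $\Fpbar$ are. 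It therefore suffices to prove the theorem over $\Fpbar$, where all supersingular elliptic curves are isogenous and their endomorphism rings are maximal orders in $D:=B_{p,\infty}$.

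Next I would fix a supersingular elliptic curve $E_0/\Fpbar$, put $\calO:=\End(E_0)$ (a maximal order in $D$), and study the functor $T:=\Hom(E_0,-)$, valued in right $\calO$-modules, on the category of abelian varieties over $\Fpbar$ that are isogenous to a power of $E_0$ --- a category containing every superspecial abelian variety. Deligne's lemma (cf. \cite[Section~1.6]{li-oort}) says that the evaluation map $T(X)\otimes_{\calO}E_0\to X$ is an isomorphism, so $T$ is fully faithful; in particular $X\cong X'$ iff $T(X)\cong T(X')$ as right $\calO$-modules. For $X$ superspecial of dimension $d$ the abelian variety $X$ is locally isomorphic to $E_0^{\,d}$ everywhere --- at each $\ell\ne p$ on Tate modules, and at $p$ on Dieudonn\'e modules, where being superspecial forces $M(X)\cong M(E_0)^{\,d}$ --- so $T(X)$ is a locally free right $\calO$-module of rank $d$. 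Hence $X\mapsto T(X)$ embeds the set of isomorphism classes of $d$-dimensional superspecial abelian varieties over $\Fpbar$ into the (nonempty) set of isomorphism classes of locally free right $\calO$-modules of rank $d$, the class $[T(E_0^{\,d})]=[\calO^{\,d}]$ always being attained.

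The hard part --- and the only place the hypothesis $d\ge2$ is genuinely used --- is to show that for $d\ge2$ every locally free right $\calO$-module of rank $d$ is free; for $d=1$ the number of such classes is the class number $h(\calO)$, which is roughly $p/12$. By Morita equivalence such modules correspond bijectively to locally free right ideals of the maximal order $\Mat_d(\calO)$ in the central simple $\Q$-algebra $A:=\Mat_d(D)$, and for $d\ge2$ the algebra $A$ satisfies the Eichler condition since $A\otimes_{\Q}\R=\Mat_d(\H)$ is not a division algebra. Eichler's theorem (see \cite[Section~III.5]{vigneras}; equivalently, strong approximation for the simply connected $\Q$-simple group $\SL_d(D)$, which is noncompact at $\infty$ precisely because $d\ge2$) then identifies the ideal class set of $\Mat_d(\calO)$ with a narrow ray class group of $\Q$; since the local reduced norms of $\Mat_d(\calO)$ fill up $\Z_v^\times$ at every finite place $v$, including $v=p$, and $\Q$ has narrow class number $1$, this class set is trivial. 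Feeding this back into the embedding of the previous step shows that for $d\ge2$ there is exactly one $d$-dimensional superspecial abelian variety over $\Fpbar$, hence over any algebraically closed field of characteristic $p$.

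If one wishes to minimize the input from the arithmetic of $\Mat_d(D)$, there is a bootstrap alternative: prove only the case $d=2$ (any two products of two supersingular elliptic curves over $\Fpbar$ are isomorphic) and obtain $d\ge3$ by induction, repeatedly rewriting a pair of factors $E_i\times E_{i+1}$ as $E_0\times E_0$. The case $d=2$ itself, however, still relies on the same strong-approximation fact for $\SL_2(D)$, so in either approach the noncompactness of $\SL_2(\H)$ --- as opposed to the compactness of $\SL_1(\H)$ --- is the crux of the argument.
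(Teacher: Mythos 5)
The paper does not actually prove Theorem~\ref{gal.sp}: it quotes the result as due to Deligne, Shioda and Ogus and refers to \cite[Section~1.6, p.~13]{li-oort}. Your argument is, in substance, the standard proof found in that reference: write every superspecial $X$ over $\Fpbar$ as $M\otimes_{\calO}E_0$ for a locally free right $\calO$-module $M$ of rank $d$, and then use the triviality of the class set of $\Mat_d(\calO)$ for $d\ge 2$. That arithmetic input is exactly the one this paper itself invokes later, in the proof of Lemma~\ref{gal.emb} (Eichler's theorem that the class number of $\Mat_d(\calO)$ is $1$ for $d\ge 2$; see \cite{Eichler1938} and \cite[Theorem~2.1]{IbuKatOort1986}), and your justification of it via reduced norms and strong approximation is correct, as are the reduction to $\Fpbar$ and the bootstrap from the case $d=2$.

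One intermediate claim is stated too broadly and should be narrowed. The evaluation map $\Hom(E_0,X)\otimes_{\calO}E_0\to X$ is \emph{not} an isomorphism for every $X$ isogenous to a power of $E_0$: since $\Hom(E_0,X)$ is torsion-free over the hereditary order $\calO$, it is automatically projective, so $\Hom(E_0,X)\otimes_{\calO}E_0$ is always a product of supersingular elliptic curves; if the evaluation map were always an isomorphism, every supersingular abelian variety would be superspecial, which fails for $d\ge 2$ where the supersingular locus is positive-dimensional. In general the evaluation map is only an isogeny, and it is an isomorphism precisely when $X$ is superspecial --- which is the only case you use, so the proof survives. To make this airtight, verify the isomorphism for $X=I\otimes_{\calO}E_0$ with $I$ a right $\calO$-ideal (where $\Hom(E_0,I\otimes_{\calO}E_0)\cong I$ by the Deuring correspondence) and extend by additivity to products; the restriction of $\Hom(E_0,-)$ to superspecial abelian varieties is then fully faithful, which is all your argument needs.
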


According to this theorem, any $d$-dimensional 
superspecial abelian variety over $\Fq$
is an $(\overline{\F}_q/\Fq)$-form of $X_0\otimes \Fq$.
Thus we obtain a natural bijection by (\ref{eq:5})
\begin{equation}
  \label{eq:gal.3}
   H^1(\Gamma_{\Fq},G)\simeq \Sp_d(\Fq), \quad d>1, 
\end{equation}
which sends the trivial class to the isomorphism 
class of $X_0\otimes \Fq$. The set $\Sp_d(\Fq)$ is partitioned
into isogeny classes:
\begin{equation}
  \label{eq:partition-ss-class}
  \Sp_d(\Fq)=\coprod_{\pi\in MW^{\rm ss}_q(d)} \Sp(\pi). 
\end{equation}

\begin{thm}\label{gal.1}
  Let $q=p^a$ and $q'=p^{a'}$ be powers of the prime number $p$ such
  that $a\equiv a' \pmod {2}$. For any integer $d\ge 1$, there are 
  natural bijections 
  \begin{gather}
   H^1(\Gamma_{\Fq},G) \simeq H^1(\Gamma_{\F_{q'}},G),
   \label{eq:gal.parity}\\ 
   \Sp_d(\Fq) \simeq \Sp_d(\F_{q'}). \label{eq:sp.parity} 
  \end{gather}
\end{thm}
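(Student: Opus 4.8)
The plan is to reduce both bijections to the single elementary observation that, because $\pi_0^2=-p$ is a scalar and hence central in $\End^0(\ol X_0)=\Mat_d(D)$, conjugation by $\pi_0^2$ acts trivially on $G=\GL_d(\calO)$, so the Galois action on $G$ depends only on the parity of the exponent. Everything else is cohomological formalism together with the descent dictionary of Section~\ref{subsec:galois-descent}.

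First I would pin down the two Galois-module structures on $G$. By (\ref{eq:gal-act-ss}) the canonical generator $\sigma_p$ of $\Gamma_{\Fp}$ acts on $G$ by $x\mapsto\pi_0 x\pi_0^{-1}$, where $\pi_0$ is viewed as the scalar matrix $\pi_0 I_d\in\Mat_d(\calO)$; since $\pi_0^2=-pI_d$ is central, this action has order dividing $2$ and factors through $\Gal(\F_{p^2}/\F_p)$. Hence $\sigma_q=\sigma_p^a$ acts by conjugation by $\pi_0^a$, and writing $\pi_0^a=(-p)^{\lfloor a/2\rfloor}\pi_0^{a\bmod 2}$ with the factor $(-p)^{\lfloor a/2\rfloor}$ central shows that this action is trivial when $a$ is even and equals conjugation by $\pi_0$ when $a$ is odd. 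Consequently, whenever $a\equiv a'\pmod 2$, the two $\Gamma$-group structures on $G$ coincide once we identify $\Gamma_{\Fq}\cong\widehat{\Z}\cong\Gamma_{\F_{q'}}$ by matching the canonical generators $\sigma_q$ and $\sigma_{q'}$ with $1\in\widehat{\Z}$.

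Next I would deduce (\ref{eq:gal.parity}). By Lemma~\ref{lm:gal.1}, both the set $Z^1$ of $1$-cocycle elements and the $\sigma$-conjugacy relation cutting out $H^1$ are described purely in terms of the $\Gamma$-action on $G$ ($N=1$ or $2$ according to the parity). Since this action is the same whether it comes from $\Fq$ or from $\F_{q'}$, the identity map of the underlying set $G$ induces the desired natural bijection $H^1(\Gamma_{\Fq},G)\isoto H^1(\Gamma_{\F_{q'}},G)$; concretely, both sides equal $\Cl_0(G)$ when $a,a'$ are even, and both equal the set of $\sigma_p$-conjugacy classes of $\{x\in G:x\pi_0 x\pi_0^{-1}\text{ has finite order}\}$ when $a,a'$ are odd.

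Finally, for (\ref{eq:sp.parity}) with $d\ge 2$ I would compose (\ref{eq:gal.parity}) with the two instances of (\ref{eq:gal.3}) afforded by Theorem~\ref{gal.sp}, obtaining $\Sp_d(\Fq)\simeq H^1(\Gamma_{\Fq},G)\simeq H^1(\Gamma_{\F_{q'}},G)\simeq\Sp_d(\F_{q'})$; the case $d=1$ lies outside the scope of Theorem~\ref{gal.sp} — there is more than one supersingular elliptic curve over $\Fpbar$ — and must instead be taken from Remark~\ref{curve.4}. To check that the composite preserves isogeny classes I would trace (\ref{eq:6})--(\ref{eq:7}): the $(\Fqbar/\Fq)$-form attached to $x\in G$ has Frobenius conjugate to $x\pi_0^a$, while over $\F_{q'}$ the same $x$ yields Frobenius $x\pi_0^{a'}=(x\pi_0^a)\cdot(-p)^{(a'-a)/2}$; as $(-p)^{(a'-a)/2}$ is a central scalar, the associated multiple Weil numbers differ exactly by the scaling $\pi_i\mapsto(-p)^{(a'-a)/2}\pi_i$, which is precisely the parity matching of supersingular (multiple) Weil numbers from Section~\ref{sec:par} (Corollary~\ref{cor:par.6}, Remark~\ref{rem:parity-isogeny-class}). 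I expect the only delicate points to be this last compatibility check and the bookkeeping for $d=1$, which has to be imported from Remark~\ref{curve.4}; the heart of the argument, the triviality of conjugation by $\pi_0^2$, is immediate.
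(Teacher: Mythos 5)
Your proposal is correct and follows essentially the same route as the paper: the heart of both arguments is that $\pi_0^2=-p$ is central, so $\sigma_p^2$ acts trivially on $G$ and the actions of $\sigma_q$ and $\sigma_{q'}$ coincide when $a\equiv a'\pmod 2$, whence \eqref{eq:gal.parity} follows from the canonical identification $\Gamma_{\Fq}\simeq\Gamma_{\F_{q'}}$, and \eqref{eq:sp.parity} follows by composing with \eqref{eq:gal.3} for $d\ge 2$ and by Remark~\ref{curve.4} for $d=1$. Your extra verification that the bijection preserves isogeny classes is the content the paper defers to Remark~\ref{rem:parity-isogeny-class} rather than part of the proof of the theorem itself.
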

\begin{proof}
  If $d=1$, then (\ref{eq:sp.parity}) has been
  proven in Section~\ref{sec:curve.2}; see Theorem~\ref{curve.3} and
  Remark~\ref{curve.4}. If $d>1$, then (\ref{eq:sp.parity}) follows
  from (\ref{eq:gal.3}) and (\ref{eq:gal.parity}). Therefore, it
  remains to 
  prove (\ref{eq:gal.parity}). 

   Since $\pi_0^2$ is a central element, the element $\sigma_p^2$ acts
   trivially on $G$ by (\ref{eq:gal-act-ss}). Thus $\sigma_q(x)= \sigma_{q'}(x)$ for all $x\in G$.
  This together
  with the
  canonical isomorphism $\Gamma_{\Fq}\simeq \Gamma_{\F_{q'}}$ (sending
  $\sigma_{q}\mapsto 
  \sigma_{q'}$) yields
  a natural bijection 
  $H^1(\Gamma_{\Fq}, G)\simeq H^1(\Gamma_{\F_{q'}},
  G)$. 
The theorem is proved.
\end{proof}

\begin{rem}\label{rem:parity-isogeny-class}
  By the same token, we have a natural bijection 
\begin{equation}
   H^1(\Gamma_{\Fq},G_\Q) \simeq H^1(\Gamma_{\F_{q'}},G_\Q). 
\end{equation}
Thus by Theorem~\ref{thm:galois-descent-isog-classes},  there is also a \emph{natural} bijection between the isogeny
classes of supersingular abelian varieties over $\F_q$ and those over $\F_{q'}$. 
This can be made explicit in terms of multiple Weil numbers. 
The Frobenius endomorphism of $X_0\otimes \Fq$ is $\pi_0^a$.
Hence the Frobenius endomorphisms of the isogeny class
corresponding to a cohomology class $[x]_{\sigma_q}\in H^1(\Gamma_{\Fq},G_\Q)$ is given
by the conjugacy class $[x\pi_0^a]$ by
(\ref{eq:6}).  Without lose of generality, assume that $a-a'=2s\geq 0$.  If $\pi=\pi_1^{m_1}\times
  \cdots \times
  \pi_r^{m_r}$ is a multiple Weil $q$-number determined by
  $[x]_{\sigma_q}$, then the corresponding multiple Weil $q'$-number 
is $\wt\pi=\wt{\pi}_1^{m_1}\times
  \cdots \times
  \wt{\pi}_r^{m_r}$, with $\wt{\pi}_i=(-p)^{-s}\pi_i$ for all $1\leq i
  \leq r$. 

By the commutative diagram (\ref{eq:9}),  the bijection
(\ref{eq:sp.parity}) preserves isogeny classes in the sense that there
is a natural bijection
\begin{equation}
  \label{eq:bijection-parity}
  \Sp(\pi)\simeq \Sp(\wt\pi)\qquad \forall\,  \pi\in MW^{\rm ss}_q(d).
\end{equation}
\end{rem}


\begin{thm}\label{gal.odd}
  Let $q=p^{2s+1}$ be an odd power of $p$. Let $Y_0$ be a fixed
  supersingular abelian variety over $\Fq$ and $\pi=\pi_1^{m_1}\times
  \cdots \times
  \pi_r^{m_r}$ the corresponding multiple Weil $q$-number. Let $V$ and
  $K$ be as in Theorem~\ref{prop:sp.1}, 
  and set $\calR_{sp}:=\Z[\wt \pi_0, p \wt
  \pi_0^{-1}]\subset K$, where $\wt
  \pi_0=(-p)^{-s}(\pi_1,\dots,\pi_r)$. Assume that $K$ has no real
  places. Then there is a natural bijection between the set $\Sp(\pi)$
  of isomorphism classes of superspecial abelian varieties in the
  isogeny class $[Y_0]$ and the set of 
  isomorphism classes of $\calR_{sp}$-lattices in $V$.   
\end{thm}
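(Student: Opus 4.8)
The plan is to transport the prime-field description of Theorem~\ref{prop:sp.1} along the parity bijection of Theorem~\ref{gal.1} and Remark~\ref{rem:parity-isogeny-class}. Write $q=p^{2s+1}$, and let $\wt\pi=\wt\pi_1^{m_1}\times\cdots\times\wt\pi_r^{m_r}$ be the multiple Weil $p$-number attached to $\pi$ by the recipe of Remark~\ref{rem:parity-isogeny-class}, so that $\wt\pi_i=(-p)^{-s}\pi_i$ for every $i$. First I would record the elementary compatibilities. Since $(-p)^{-s}\in\Q^\times$ one has $\Q(\wt\pi_i)=\Q(\pi_i)=K_i$; hence the semisimple $\Q$-algebra $K=\prod_i K_i$, the $K$-module $V=\prod_i K_i^{m_i}$, and the hypothesis that $K$ has no real place are all unchanged on passing from $\pi$ to $\wt\pi$. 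Moreover (taking $\pi_i=\pm p^{s}\sqrt{p}\zeta_{n_i}$ by Lemma~\ref{lemma:par.1}) we get $\wt\pi_i=\pm\sqrt{p}\zeta_{n_i}$, a supersingular Weil $p$-number, and the $\wt\pi_i$ remain pairwise nonconjugate, so $\wt\pi$ is a genuine supersingular multiple Weil $p$-number.

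Next I would invoke the natural, isogeny-class-preserving bijection $\Sp(\pi)\simeq\Sp(\wt\pi)$. When $d=\dim Y_0\ge 2$ this is precisely (\ref{eq:bijection-parity}) in Remark~\ref{rem:parity-isogeny-class}; it comes from the bijection (\ref{eq:gal.3}) and Theorem~\ref{gal.1}, together with the fact that $\pi_0^2$ is central --- which makes the actions of $\Gamma_{\F_q}$ and $\Gamma_{\F_p}$ on $G=\GL_d(\calO)$ literally equal, so that $H^1(\Gamma_{\F_q},G)=H^1(\Gamma_{\F_p},G)$. When $d=1$, the hypothesis that $K$ has no real place forces $\pi\neq\pm\sqrt{q}$, and $\Sp(\pi)\simeq\Sp(\wt\pi)$ is then the bijection exhibited in Remark~\ref{curve.4}.

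Finally I would apply Theorem~\ref{prop:sp.1}(2) over the prime field to a chosen representative of the $\F_p$-isogeny class of $\wt\pi$, say $\wt Y_0=\prod_i X_{\wt\pi_i}^{m_i}$. Realizing its Frobenius endomorphism inside $K=\prod_i K_i$ as in Section~\ref{sec:sp.1} gives exactly $\wt\pi_0=(-p)^{-s}(\pi_1,\dots,\pi_r)$; hence the order that the construction of Section~\ref{sec:sp.1} attaches to $\wt Y_0$ is the order $\calR_{sp}$ of the present statement, and the associated $K$-module is the $V$ above. Since $K$ has no real place, Theorem~\ref{prop:sp.1}(2) yields a natural bijection between $\Sp(\wt\pi)$ and the set of isomorphism classes of $\calR_{sp}$-lattices in $V$. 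Composing it with $\Sp(\pi)\simeq\Sp(\wt\pi)$ produces the asserted bijection.

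I do not expect a deep obstacle: the argument is a concatenation of equivalences already established. The one point demanding care is the last identification --- checking that the Frobenius endomorphism of the chosen $\F_p$-model $\wt Y_0$ equals $\wt\pi_0$ under the canonical embedding into $K$, so that the order manufactured by Theorem~\ref{prop:sp.1} agrees with the $\calR_{sp}$ defined in the statement, and that the pair $(K,V)$ is genuinely insensitive to the change of ground field from $\F_q$ to $\F_p$. A minor secondary matter is to confirm that the $d=1$ instances permitted by the hypothesis are handled by the separate treatment of Section~\ref{sec:curve.2}, as indicated above.
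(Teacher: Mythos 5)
Your proposal is correct and follows exactly the route the paper takes: its entire proof of Theorem~\ref{gal.odd} is the one-line citation ``This follows from Theorems \ref{prop:sp.1} and \ref{gal.1}'', and you have simply made explicit the compatibility checks (invariance of $K$ and $V$ under the twist $\pi_i\mapsto(-p)^{-s}\pi_i$, identification of the Frobenius of the $\F_p$-model with $\wt\pi_0$, the separate $d=1$ case via Remark~\ref{curve.4}) that the authors leave implicit. The only caveat is notational: you rightly interpret the order in the statement as the $\calR_{sp}=\calR[\wt\pi_0^2/p]$ of Section~\ref{sec:sp.1}, which is what Theorem~\ref{prop:sp.1}(2) actually requires, even though the theorem's displayed generators $\Z[\wt\pi_0,p\wt\pi_0^{-1}]$ literally give only $\calR$.
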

\begin{proof}
   This follows from Theorems \ref{prop:sp.1} and \ref{gal.1}. 
\end{proof}



The above theorem provides an approach for computing the size of
$\Sp_d(\Fq)$ explicitly in the odd exponent case, subject to 
the condition that $K$ has no totally real factors.  
For the rest of this section we shall describe 
$H^1(\Gamma_{\Fq},
\GL_d(\calO))$ (and hence $\Sp_d(\Fq)$) 
when $q$ is an even power of $p$.

\subsection{A description of $H^1(\Gamma_{\Fq}, \GL_d(\calO))$ with
  even exponent.}
\label{sec:gal_coh.2}
\def\Emb{{\rm Emb}}

In what follows we assume that $q=p^a$ is an even power of $p$
and $X_0=E_0^d\otimes \F_q$ with $d\ge 2$. 
The Frobenius endomorphism $\pi_{X_0}=(-p)^{a/2}$ lies
in the center of $\End(\ol X_0)=\Mat_d(\calO)$. 
Hence $\Gamma_{\Fq}$ acts trivially on 
the group $G:=\GL_d(\calO)$ by (\ref{eq:gal-act-ss}). 
Then it follows from Lemma~\ref{lm:gal.1} that $H^1(\Gamma_{\Fq}, G)$ can be identified with 
the set  
$\Cl_0(G)$ of conjugacy classes of elements in
$G$ of finite order. 
We shall give a lattice
description for $\Cl_0(G)$ and hence for $\Sp_d(\Fq)$ by the previous
correspondence. See Theorem~\ref{gal_coh.3} for details.

Suppose $x\in G$ is an element of finite order, which is necessarily
semi-simple. 
The minimal polynomial of $x$ over $\Q$ has the form
\begin{equation}
  \label{eq:gal_coh.5}
  P_{\ul n}(t)=\Phi_{n_1}(t) \Phi_{n_2}(t) \cdots \Phi_{n_r}(t), \quad
  1\le n_1<n_2<\dots <n_r
\end{equation}
for some $r$-tuple $\ul n=(n_1,\dots, n_r)\in \bbN^r$, where $\Phi_m(t)\in \Z[t]$ denotes the $m$-th
cyclotomic polynomial.   We define
\[ K_{\ul n}:=\frac{\Q[t]}{\prod_{i=1}^r \Phi_{n_i}(t)} \quad
\text{and}\quad  
   A_{\ul n}:=\frac{\Z[t]}{\prod_{i=1}^r \Phi_{n_i}(t)}. \]
The $\Q$-subalgebras of $\End^0(\ol X_0)=\Mat_d(D)$ generated by $x$ and $\pi_x=x\pi_{X_0}$
coincide and are isomorphic to $K_{\ul n}$. Moreover, the subring
$\Z[x]\subset \Mat_d(\calO)$ is canonically isomorphic to $A_{\ul
  n}$. 

We denote by $C(\ul n)\subset \Cl_0(G)$ the set of conjugacy classes of
$G$ with minimal polynomial $P_{\ul n}(t)$.  By Theorem~\ref{thm:galois-descent-isog-classes}, each conjugacy class
$[x]\in C(\ul n)$ determines a (conjugacy class of) supersingular multiple Weil $q$-number
$\pi_1^{m_1}\times \cdots \times \pi_r^{m_r}$, where $\pi_i=(-p)^{a/2}\zeta_{n_i}$, and $\ul m=(m_1, \cdots, m_r)$ is the
type of the faithful $(K_{\ul n},D)$-bimodule structure on $V=D^d$
equipped by $\pi_x\in \Mat_d(D)$. Since
$\Q(\pi_x)=\Q(x)\cong K_{\ul n}$, the $(K_{\ul n},D)$-bimodule
structure on $V$ is also equipped \emph{directly} by $x\in\Mat_d(D)$. Thus
$\ul m$ is also called the type of $[x]$, as it depends only on
the conjugacy class.  Recall that a $(K_{\ul n},D)$-bimodule $V$ is said to be type
$\ul m$ if the decomposition into $D$-subspaces $V=\oplus_{i=1}^r
V_i$ induced from the decomposition $K_{\ul n}=\prod_{i=1}^r
\Q(\zeta_{n_i})$ satisfies that $\dim_D V_i=m_ie(\pi_i)$ for all $1\leq i \leq r$, 
where $e(\pi_i)$ is defined in Lemma~\ref{lem:minimal-embedding}. 
Since $\dim E_0=1$, we have
$e(\pi_i)=d(\pi_i)$, the dimension of the Weil number
$\pi_i$. Note that $d(\pi_i)$ depends
only on the integer $n_i$ as $q=p^a$ is fixed, so we write $d(n_i)$ for it
instead.  Equation (\ref{eq:admissible-cond}) becomes 
\begin{equation}
  \label{eq:gal_coh.8}
  m_1 d(n_1)+\dots+m_r d(n_r)=d.
\end{equation}


A pair of $r$-tuples $(\ul n,\ul m)\in \bbN^r\times \bbN^r$
with $1\le n_1<\dots<n_r$ is said to be {\it $d$-admissible} if the
condition~(\ref{eq:gal_coh.8}) 
is satisfied. Let $C(\ul n, \ul m)\subset
C(\ul n)$ denote the subset of conjugacy classes of type $\ul
m$. An element $x\in G$ or its conjugacy $[x]\in \Cl(G)$ is said to be
type $(\ul n, \ul m)$ if $[x]\in 
C(\ul n, \ul m)$. 

\begin{lemma}\label{gal.emb}
Fix a faithful $(K_{\ul n},D)$-bimodule $V=D^d$ of type $\ul m$.  
There is a natural bijection between the set 
$C(\ul n, \ul m)$ and the set of
isomorphism classes of $(A_{\ul n},\calO)$-lattices in $V$.   
\end{lemma}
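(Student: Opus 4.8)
The plan is to present both sides as orbit sets for the automorphism group of the fixed bimodule, and to pass between them through the standard lattice $\calO^{d}\subset V=D^{d}$. Fix the $(K_{\ul n},D)$-bimodule structure of type $\ul m$ on $V$ as in the statement; this amounts to fixing a $\Q$-algebra embedding $K_{\ul n}\embed\End_D(V)=\Mat_d(D)$, and we let $\pi\in\Mat_d(D)$ be the image of $t$. Then $\pi$ has finite order (dividing $\lcm(n_1,\dots,n_r)$), its minimal polynomial over $\Q$ is $P_{\ul n}(t)=\prod_i\Phi_{n_i}(t)$, and $\Z[\pi]\cong A_{\ul n}$ since $P_{\ul n}$ is monic with integer coefficients. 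I would define the forward map by sending $[x]\in C(\ul n,\ul m)$, represented by $x\in G=\GL_d(\calO)$, to the class of the full right $\calO$-lattice $\Lambda:=g^{-1}\calO^{d}\subset V$, where $g\in G_\Q=\GL_d(D)$ is chosen with $x=g\pi g^{-1}$. Such a $g$ exists by Lemma~\ref{lem:bijections-str}: the element $x$ equips $V$ with a faithful $(K_{\ul n},D)$-bimodule structure of type $\ul m$ through $\Q[t]/(P_{\ul n})\cong\Q(x)$, and being of the same type $\ul m$ this structure is isomorphic to the fixed one, so the embeddings $t\mapsto\pi$ and $t\mapsto x$ are $G_\Q$-conjugate. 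The lattice $\Lambda$ is stable under $\pi$, hence under $A_{\ul n}=\Z[\pi]$, because $\pi\Lambda=g^{-1}(g\pi g^{-1})\calO^{d}=g^{-1}x\calO^{d}=g^{-1}\calO^{d}=\Lambda$; so $\Lambda$ is an $(A_{\ul n},\calO)$-lattice in $V$.

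Next I would check that $[\Lambda]$ depends only on $[x]$ and that the construction reverses. Replacing $x$ by $a^{-1}xa$ with $a\in G$ permits the choice $g'=a^{-1}g$, and $g'^{-1}\calO^{d}=g^{-1}a\calO^{d}=g^{-1}\calO^{d}$ is unchanged; while two choices $g_1,g_2$ with $x=g_i\pi g_i^{-1}$ differ by $c=g_2^{-1}g_1\in C_\pi^{\times}=\Aut_{(K_{\ul n},D)}(V)$, and $c$ carries $g_1^{-1}\calO^{d}$ to $g_2^{-1}\calO^{d}$ by an isomorphism of $(A_{\ul n},\calO)$-lattices. For the inverse map, given an $(A_{\ul n},\calO)$-lattice $\Lambda\subset V$, I would first invoke that every full right $\calO$-lattice in $V=D^{d}$ is isomorphic to $\calO^{d}$: this is precisely where $d\ge 2$ enters, since then $\Mat_d(D)$ satisfies the Eichler condition and strong approximation forces the class number of $\calO^{d}$ to be one (cf.\ \cite{vigneras}). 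Fixing a right $\calO$-isomorphism $\psi\colon\calO^{d}\isoto\Lambda$ and extending it $D$-linearly to an automorphism of $V$, one sets $x_\Lambda:=\psi^{-1}\pi\psi$; it has minimal polynomial $P_{\ul n}$, it preserves $\calO^{d}=\psi^{-1}\Lambda$ because $\pi\Lambda=\Lambda$, so $x_\Lambda\in G$, and transporting the bimodule structure through $\psi$ shows $[x_\Lambda]\in C(\ul n,\ul m)$. Changing $\psi$ by an element of $\Aut_\calO(\calO^{d})=G$ alters $x_\Lambda$ only within its $G$-conjugacy class, and an isomorphism $\Lambda\isoto\Lambda'$ of $(A_{\ul n},\calO)$-lattices, extended $D$-linearly, commutes with $\pi$ and hence leaves $x_\Lambda$ fixed, so $[\Lambda]\mapsto[x_\Lambda]$ is well defined. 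Finally the two maps are mutually inverse: applying the return map to $\Lambda=g^{-1}\calO^{d}$ one may take $\psi=g^{-1}$ and recovers $[g\pi g^{-1}]=[x]$, and the lattice attached to $[x_\Lambda]$ is $\psi\calO^{d}=\Lambda$.

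The bulk of the work is the bookkeeping in the previous paragraph: balancing the two ambiguities — conjugation of $x$ inside $G$ and the choice of the intertwiner $g$ (respectively $\psi$) — against the relation ``isomorphic as $(A_{\ul n},\calO)$-lattices''. This is the same formalism already used for Theorem~\ref{prop:sp.1} and in \cite{shih-yang-yu}, and I expect it to go through without surprises. The one genuinely non-formal input, and the step I would flag as the main obstacle, is the surjectivity of the forward map, i.e.\ that an arbitrary $(A_{\ul n},\calO)$-lattice in $V$ is free as a right $\calO$-module; this uses $d\ge 2$ in an essential way, through the Eichler condition for $\Mat_d(D)$, and indeed it fails for $d=1$, where $\Mat_1(D)=D=B_{p,\infty}$ is totally definite and the right ideal class number of $\calO$ may exceed one.
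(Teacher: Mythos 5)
Your proof is correct and follows essentially the same route as the paper: both reduce the statement to a correspondence between $G$-conjugacy classes of type $(\ul n,\ul m)$ and $(A_{\ul n},\calO)$-lattices that are free as right $\calO$-modules, and then invoke Eichler's theorem that the class number of $\Mat_d(\calO)$ is $1$ for $d\ge 2$ to conclude that every $\calO$-lattice in $V$ is free. The only (cosmetic) difference is that you fix the embedding $t\mapsto\pi$ and move the lattice $g^{-1}\calO^d$, whereas the paper fixes the standard lattice $M_0=\calO^d$ and varies the $A_{\ul n}$-module structure via $x$; these are equivalent by transport of structure.
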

\begin{proof}
  Let $M_0:=\calO^d\subset V $ be the standard lattice in $V$.
  Every element $x\in G$ of type $(\ul n, \ul m)$  gives rise to an
  $(A_{\ul n}, \calO)$-bimodule structure on $M_0$. Two elements $x,
  x'$ determine isomorphism bimodule structures if and only if
  they are conjugate in $G$. Therefore, the set $C(\ul n, \ul m)$ is
  in bijection 
    with the set of isomorphism classes of $(A_{\ul n},
    \calO)$-lattices in $V$ that are $\calO$-isomorphic to
  $M_0$. Since $d\ge 
    2$, every $\calO$-lattice in $V$ is isomorphic to $M_0$. This
    follows from a theorem of Eichler \cite{Eichler1938} that the
  class number of 
    $\Mat_d(\calO)$ is $1$ for $d\ge 2$ (see also
    \cite[Theorem~2.1]{IbuKatOort1986}).
\end{proof}

\begin{thm}\label{gal_coh.3}
  Let $\Cl_0(G)$ be the set of conjugacy classes of
  $G=\GL_d(\calO)$ of finite order with $d\ge 2$. Then 
  \begin{equation}
    \label{eq:gal_coh}
    \Cl_0(G)=\coprod_{(\ul n, \ul m)} C(\ul n, \ul m),
  \end{equation}
  where $(\ul n, \ul m)$ runs through all $d$-admissible types. 
For each fixed $(\ul n, \ul m)$,  there are natural bijections between
the following sets:  
\begin{enumerate}
\item $C(\ul n,\ul m)$, the set of conjugacy classes of type $(\ul n,
  \ul m)$;
\item $\Sp(\pi)$,  where $\pi=\pi_1^{m_1}\times \cdots \times
  \pi_r^{m_r}$ and $\pi_i=(-p)^{a/2}\zeta_{n_i}$;   
\item the set of isomorphism classes of $(A_{\ul n},
  \calO)$-lattices in the $(K_{\ul n}, D)$-bimodule $V$ of 
  type $\ul m$.  
\end{enumerate}
\end{thm}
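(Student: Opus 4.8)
The plan is to assemble the statement from the general descent machinery of Section~\ref{subsec:galois-descent} together with Lemmas~\ref{lm:gal.1} and \ref{gal.emb}. First I would record the starting point: since $a$ is even, the Frobenius endomorphism $\pi_{X_0}=(-p)^{a/2}$ is a central element of $\End(\ol X_0)=\Mat_d(\calO)$, so by (\ref{eq:gal-act-ss}) the group $\Gamma_{\Fq}$ acts trivially on $G=\GL_d(\calO)$; hence Lemma~\ref{lm:gal.1} gives $H^1(\Gamma_{\Fq},G)=\Cl_0(G)$, and (\ref{eq:gal.3}) identifies this set with $\Sp_d(\Fq)$. Likewise $\sigma_q$-conjugacy in $G_\Q$ is ordinary conjugacy, and the map $\Pi$ of (\ref{eq:7}) becomes a bijection $B(G_\Q)\isoto\Cl(G_\Q)$ sending $[x]_{\sigma_q}$ to $[x\pi_{X_0}]$.

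Next I would establish the decomposition (\ref{eq:gal_coh}). Any $x\in G$ of finite order is semisimple and satisfies $x^N=1$ for some $N$, so its minimal polynomial over $\Q$ divides $t^N-1$ and is therefore of the form $P_{\ul n}(t)=\prod_{i=1}^r\Phi_{n_i}(t)$ with $n_1<\cdots<n_r$ uniquely determined; this gives the coordinate $\ul n$, and $\Z[x]\cong A_{\ul n}$, $\Q[x]\cong K_{\ul n}$ as recorded just before the theorem. The idempotents of $K_{\ul n}=\prod_i\Q(\zeta_{n_i})$ split $V=D^d$ as $\bigoplus_i V_i$, and putting $\dim_D V_i=m_i e(\pi_i)$ defines the type $\ul m=(m_1,\dots,m_r)$; since $\dim E_0=1$ one has $e(\pi_i)=d(\pi_i)=d(n_i)$ by Lemma~\ref{lem:minimal-embedding}, and $\sum_i\dim_D V_i=\dim_D V=d$ yields exactly the admissibility condition (\ref{eq:gal_coh.8}), which is (\ref{eq:admissible-cond}) for these values. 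As $\ul m$ is unchanged under $G$-conjugation, it depends only on $[x]$, so $\Cl_0(G)$ is the disjoint union of the subsets $C(\ul n,\ul m)$ indexed by the $d$-admissible types; each such set is nonempty because a type-$\ul m$ bimodule $V$ admits an $A_{\ul n}$-stable $\calO$-lattice (apply the standard averaging $\sum_{k\ge 0}\xi^kM_0$, with $\xi$ the image of $t$).

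For the three bijections attached to a fixed $d$-admissible $(\ul n,\ul m)$: the equivalence of (1) and (3) is precisely Lemma~\ref{gal.emb}, whose proof rests on Eichler's theorem that $\Mat_d(\calO)$ has class number one for $d\ge 2$. For the equivalence of (1) and (2) I would run the diagram (\ref{eq:9}): the composite $\Cl_0(G)=H^1(\Gamma_{\Fq},G)\simeq\Sp_d(\Fq)\to E^0(\Fqbar/\Fq,[X_0])$ carries $[x]$ to the isogeny class attached to the image $[x]_{\sigma_q}\in H^1(\Gamma_{\Fq},G_\Q)$, and by Theorem~\ref{thm:galois-descent-isog-classes} the latter is the conjugacy class of the multiple Weil $q$-number read off from $\pi_x=x\pi_{X_0}=x(-p)^{a/2}$: the minimal polynomial of $x$ being $\prod_i\Phi_{n_i}(t)$, the characteristic roots of $\pi_x$ are the numbers $(-p)^{a/2}\zeta_{n_i}$, with bimodule type $\ul m$, i.e.\ the multiple Weil $q$-number $\pi=\pi_1^{m_1}\times\cdots\times\pi_r^{m_r}$ with $\pi_i=(-p)^{a/2}\zeta_{n_i}$. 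Hence under $\Cl_0(G)\simeq\Sp_d(\Fq)$ the piece $C(\ul n,\ul m)$ is exactly the fiber over the isogeny class of $X_\pi$, which is $\Sp(\pi)$ by the partition (\ref{eq:partition-ss-class}); this gives (1)$\Leftrightarrow$(2), and summing over all $d$-admissible types recovers and refines (\ref{eq:partition-ss-class}).

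The routine parts are the shape of the minimal polynomial of a finite-order element and the diagram chase through (\ref{eq:9}). The one place needing care is matching the combinatorial notion of \emph{type} of $x\in G$ with the \emph{type} of $\pi_x\in G_\Q$ used in Theorem~\ref{thm:galois-descent-isog-classes} — but this identification was already arranged in the discussion preceding the statement (the $(K_{\ul n},D)$-bimodule structures on $V$ equipped by $x$ and by $\pi_x=x\pi_{X_0}$ coincide because $\Q(x)=\Q(\pi_x)$ inside $\Mat_d(D)$), so no genuinely new obstacle arises; the residual work is bookkeeping to confirm that no admissible type is lost or counted twice.
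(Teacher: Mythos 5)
Your proposal is correct and follows essentially the same route as the paper: the decomposition of $\Cl_0(G)$ by minimal polynomial and bimodule type is exactly the discussion preceding the theorem, the bijection (1)$\Leftrightarrow$(3) is Lemma~\ref{gal.emb}, and (1)$\Leftrightarrow$(2) is obtained, as in the paper, by combining (\ref{eq:gal.3}) with Theorem~\ref{thm:galois-descent-isog-classes} via the diagram (\ref{eq:9}). The only addition is your averaging argument for nonemptiness of each $C(\ul n,\ul m)$, which the paper leaves implicit but which is harmless and correct.
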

\begin{proof}
The bijection between (1) and (2) is
  established by combining  (\ref{eq:gal.3}) and
  Theorem~\ref{thm:galois-descent-isog-classes}.   The bijection between (1) and (3) follows from
  Lemma~\ref{gal.emb}. 
\end{proof}

\section{Arithmetic results}
\label{sec:arithmetic-results}
In this section, we prove the arithmetic results used in Section~\ref{sec:sp}
concerning the order $\calR_{sp}$.  In the light of
(\ref{eq:sp.5}), our goals are two fold: (1) show that  $\calR_{sp}$ is Bass for every supersingular multiple Weil
$p$-number $\pi\in MW_p^{\rm
  ss}(2)$ of dimension $2$ distinct from $\pm\sqrt{p}$; (2) classify all suporders of
$\calR_{sp}$ (i.e., orders in $K$ containing $\calR_{sp}$) and calculate their class numbers when $\pi$ is not of
the form $\pi_1\times \pi_1$ with $\pi_1\in
W_p^{\rm ss}(1)$ (The case $\pi=\pi_1\times \pi_1$ has already been
treated in Section~\ref{sec:sp.2}).  
\subsection{Orders in products of number fields}
Let $K=\prod_{i=1}^r K_i$ be a product of number fields, and
$\calS$ be an order contained in the maximal order
$O_K=\prod_{i=1}^rO_{K_i}$. We write $\eta_i: K\to K_i$ for the
projection map onto the $i$-th factor. 
By a theorem of Borevich and Faddeev \cite{MR0205980}
  (see \cite[Section 37, p.~789]{curtis-reiner:1} or
  \cite[Theorem 2.1]{MR794792}), $\calS$ is
  Bass if and only if $O_K/\calS$ is cyclic as an
$\calS$-module.  This leads to the following simple criterion when
 $r=2$. 

 \begin{lem}\label{lem:criterion-Bass-prod}
A suborder $\calS\subseteq O_{K_1}\times O_{K_2}$ that projects
   surjectively onto both factors $O_{K_1}$ and $O_{K_2}$ is
   Bass.
 \end{lem}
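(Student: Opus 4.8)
The plan is to reduce everything to the Borevich--Faddeev criterion recalled just above: $\calS$ is Bass if and only if $O_K/\calS$ is cyclic as an $\calS$-module, where $O_K=O_{K_1}\times O_{K_2}$. So the task becomes to exhibit a single $\calS$-generator of the finite module $O_K/\calS$.

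First I would introduce the ``conductor coming from the second factor'', $I_2:=\{\,y\in O_{K_2}\mid (0,y)\in\calS\,\}$. Using that $\eta_2(\calS)=O_{K_2}$ one checks $I_2$ is an ideal of $O_{K_2}$: given $a\in O_{K_2}$, choose $(b,a)\in\calS$, and then $(b,a)\cdot(0,y)=(0,ay)\in\calS$. Next, since $\eta_1\colon\calS\to O_{K_1}$ is surjective, for any $(x_1,x_2)\in O_K$ one can pick $s=(x_1,b)\in\calS$ and write $(x_1,x_2)=s+(0,x_2-b)$, so that $\calS+(0\times O_{K_2})=O_K$. The second isomorphism theorem, applied in the category of $\calS$-modules, then yields
\[
O_K/\calS=\bigl(\calS+(0\times O_{K_2})\bigr)/\calS\;\cong\;(0\times O_{K_2})\big/\bigl(\calS\cap(0\times O_{K_2})\bigr)=(0\times O_{K_2})/(0\times I_2)\;\cong\;O_{K_2}/I_2 .
\]

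Finally I would observe that this quotient is cyclic over $\calS$. The action of $s=(s_1,s_2)\in\calS$ on $(0\times O_{K_2})/(0\times I_2)$ is $s\cdot(0,\bar y)=(0,\overline{s_2 y})$, hence it factors through the surjection $\eta_2\colon\calS\twoheadrightarrow O_{K_2}$; since $O_{K_2}/I_2$ is generated by $\bar 1$ over $O_{K_2}$, the class of $(0,1)$ generates $O_K/\calS$ over $\calS$. By the Borevich--Faddeev criterion, $\calS$ is Bass. The argument is essentially formal once the reduction to cyclicity is made; the only genuine verification is that $I_2$ is an $O_{K_2}$-ideal (this uses surjectivity onto the second factor), while surjectivity onto the first factor is exactly what makes $\calS+(0\times O_{K_2})$ exhaust $O_K$. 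I do not expect any real obstacle here beyond this bookkeeping, the substance being carried by the cited theorem.
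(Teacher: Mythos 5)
Your argument is correct and is essentially the paper's own proof: the paper likewise invokes the Borevich--Faddeev cyclicity criterion and identifies $O_K/\calS$ with a quotient of a single factor via the surjection onto the other factor, merely with the roles of $O_{K_1}$ and $O_{K_2}$ interchanged (it uses $O_{K_1}/(O_{K_1}\cap\calS)\simeq O_K/\calS$ via $\eta_2(\calS)=O_{K_2}$, then cyclicity from $\eta_1(\calS)=O_{K_1}$). Since the hypothesis is symmetric in the two factors, this is the same proof.
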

 \begin{proof}
 Each $O_{K_i}$ is equipped
   with an $\calS$-module structure via the projection map $\eta_i:
   \calS\to O_{K_i}$. 
Since $\eta_2(\calS)=O_{K_2}$, the
   natural inclusion $O_{K_1}\hookrightarrow O_{K_1}\times
   O_{K_2}$ defined by $x\mapsto (x,0)$ induces an isomorphism 
of $S$-modules 
\begin{equation}
  \label{eq:isom-module}
O_{K_1}/(O_{K_1}\cap \calS)\xrightarrow{\simeq} (O_{K_1}\times
O_{K_2})/\calS.   
\end{equation}
The left hand side is a cyclic $\calS$-module because
$\eta_1(\calS)=O_{K_1}$. 
 \end{proof}


We return to the general case with $r\geq 1$. 
Let $\a$ be an
$O_K$-lattice (i.e., a fractional $O_K$-ideal that contains a $\qq$-basis of
$K$) contained in $\calS$. There is a one-to-one correspondence
between the orders $B$ intermediate to $\calS \subseteq O_K$ and the
subrings of $O_K/\a$ containing $\calS/\a$. By \cite[Theorem
I.12.12]{MR1697859}, the class number $h(B)$ can be calculated by 
\begin{equation}
  \label{eq:classNo-order}
h(B)=\frac{h(O_K)[(O_K/\a)^\times:(B/\a)^\times]}{[O_K^\times:B^\times]},  
\end{equation}
where $h(O_K)=\prod_{i=1}^r h(O_{K_i})$. A priori,   \cite[Theorem
I.12.12]{MR1697859} is only stated for the number field case with $\a$
being the conductor of $B$, but the
same proof applies in the current setting as well. 

\begin{lem}\label{lem:classno-suborder=maxorder}
Let $\a\subset O_K$ be an $O_K$-lattice. If the natural map
$O_K^\times\to (O_K/\a)^\times$ is surjective, 
  then $h(B)=h(O_K)$ for every suborder $B \subseteq O_K$ containing $\a$. 
\end{lem}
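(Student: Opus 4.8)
The plan is to apply the class number formula (\ref{eq:classNo-order}) and to check that the surjectivity hypothesis forces the index quotient appearing there to be trivial. Since $\a\subseteq B\subseteq O_K$ with $\a$ an $O_K$-lattice, that formula reads
$$h(B)=\frac{h(O_K)\,[(O_K/\a)^\times:(B/\a)^\times]}{[O_K^\times:B^\times]},$$
so the whole statement reduces to proving the equality of indices
$$[O_K^\times:B^\times]=[(O_K/\a)^\times:(B/\a)^\times].$$

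To establish this I would let $\phi\colon O_K^\times\to(O_K/\a)^\times$ be the reduction-mod-$\a$ homomorphism, which is surjective by assumption, and set $U_B:=\phi^{-1}\bigl((B/\a)^\times\bigr)$. Because $\phi$ is surjective and $U_B\supseteq\ker\phi$, one immediately gets $[O_K^\times:U_B]=[(O_K/\a)^\times:(B/\a)^\times]$, so it suffices to prove the group identity $B^\times=U_B$. One inclusion is formal: if $u\in B^\times$ then $u,u^{-1}\in B\subseteq O_K$, hence $u\in O_K^\times$, and $\phi(u)=(u\bmod\a)$ has inverse $(u^{-1}\bmod\a)$ in $B/\a$, so $u\in U_B$.

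The substantive direction is $U_B\subseteq B^\times$, and this is where the $O_K$-ideal structure of $\a$ is used. Given $u\in U_B$: first, $u\bmod\a\in B/\a$ gives $u\in B+\a=B$ since $\a\subseteq B$; second, invertibility of $u\bmod\a$ in $B/\a$ yields $v\in B$ with $uv=1+a$ for some $a\in\a$, whence $u^{-1}=v-u^{-1}a$ with $u^{-1}a\in\a\subseteq B$ (using $u^{-1}\in O_K$ and $O_K\a\subseteq\a$), so $u^{-1}\in B$ and therefore $u\in B^\times$. Combining the two displayed identities then gives $h(B)=h(O_K)$. I do not expect any real obstacle here: the argument is short, the only genuine inputs are the surjectivity hypothesis and the fact that $\a$ absorbs multiplication by $O_K$, and the rest is bookkeeping with (\ref{eq:classNo-order}).
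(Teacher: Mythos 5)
Your proof is correct and follows essentially the same route as the paper: both reduce the statement via the class number formula (\ref{eq:classNo-order}) to showing $[O_K^\times:B^\times]=[(O_K/\a)^\times:(B/\a)^\times]$, and both deduce this from the surjectivity hypothesis by identifying $B^\times$ with the preimage of $(B/\a)^\times$. The only cosmetic difference is that the paper invokes an external lemma for the identity $B^\times=O_K^\times\cap B$, whereas you verify the needed containments directly using that $\a$ is an $O_K$-ideal.
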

\begin{proof}
  Let $\grK$ be the kernel of $O_K^\times\to (O_K/\a)^\times$. Then
  $\grK\subseteq B^\times$ and $[O_K^\times: B^\times]=[O_K^\times/\grK:
  B^\times/\grK]$. We identify $O_K^\times/\grK$ with the image of
  $O_K^\times\to (O_K/\a)^\times$, and similarly for $B^\times/\grK$.
  By \cite[Lemma~2.7]{xue-yang-yu:ECNF},  
  $B^\times=O_K^\times\cap B$. Hence \[B^\times/\grK=(O_K^\times/\grK)\cap
  (B/\a).\]
When $O_K^\times$ maps surjectively onto  $(O_K/\a)^\times$, we have
$B^\times/\grK=(O_K/\a)^\times\cap 
  (B/\a)=(B/\a)^\times$. Therefore, $h(B)=h(O_K)$ by (\ref{eq:classNo-order}). 
\end{proof}
 \begin{lem}\label{lem:ideal-in-Rsp}
   Let $\calS$ be a suborder of $O_K=\prod_{i=1}^r O_{K_i}$, and $\c_1$ be a
   nonzero ideal of $O_{K_1}$ contained in $\eta_1(\calS)$.  If $x_1\in
   O_{K_1}$ is an element such that $(x_1,0, \cdots, 0)\in \calS$,
   then $(x_1\c_1, 
   0, \cdots, 0)$ is an ideal of  $O_K$ contained in $\calS$.  Similar
   results hold for all $1\leq i \leq r$. 
 \end{lem}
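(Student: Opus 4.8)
The plan is to separate the statement into a trivial ring-theoretic observation and the one short computation that actually uses the hypothesis. First I would note that $x_1\c_1$ is an ideal of $O_{K_1}$: it is the image of the ideal $\c_1$ under multiplication by the fixed element $x_1$, and in a commutative ring such an image is again an ideal. Since the ideals of the product ring $O_K=\prod_{i=1}^r O_{K_i}$ are precisely the products of ideals of the factors, the set $(x_1\c_1,0,\dots,0)=x_1\c_1\times 0\times\cdots\times 0$ is an ideal of $O_K$. Moreover it is an $O_K$-lattice in $K$ as soon as $x_1\ne 0$ and $\c_1\ne 0$, which is the case here. So the only real content is the inclusion $(x_1\c_1,0,\dots,0)\subseteq\calS$.

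For that inclusion I would argue as follows. Every element of the ideal $x_1\c_1$ of $O_{K_1}$ has the form $x_1 c$ for a \emph{single} $c\in\c_1$: a finite sum $\sum_j r_j(x_1 c_j)$ with $r_j\in O_{K_1}$ and $c_j\in\c_1$ equals $x_1\bigl(\sum_j r_j c_j\bigr)$, and $\sum_j r_j c_j\in\c_1$ because $\c_1$ is an ideal. Given such a $c$, the hypothesis $\c_1\subseteq\eta_1(\calS)$ furnishes an element $s\in\calS$ with $\eta_1(s)=c$, that is, $s=(c,\ast,\dots,\ast)$. Since $\calS$ is a ring containing $(x_1,0,\dots,0)$ by assumption, the product $(x_1,0,\dots,0)\cdot s$ lies in $\calS$; but this product equals $(x_1 c,0,\dots,0)$, as all coordinates beyond the first are annihilated by the zeros. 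Hence $(x_1 c,0,\dots,0)\in\calS$, and letting $c$ range over $\c_1$ gives $(x_1\c_1,0,\dots,0)\subseteq\calS$. The analogous statement for a general index $i$ follows by permuting the factors of $O_K$.

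There is no genuine obstacle here: the lemma is elementary, and the proof consists of two short computations in $\calS$. The only points worth stating carefully are the description of ideals of a finite product of Dedekind rings and the remark that $x_1\c_1$ is already equal to $\{x_1 c : c\in\c_1\}$ rather than merely the $O_{K_1}$-span of that set; with those in hand, the containment in $\calS$ reduces to a single multiplication using that $\calS$ is a ring.
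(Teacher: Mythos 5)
Your argument is correct and is essentially identical to the paper's proof: both establish the containment by picking, for each $c\in\c_1$, a preimage $s\in\calS$ under $\eta_1$ and multiplying by $(x_1,0,\dots,0)\in\calS$ to land on $(x_1c,0,\dots,0)$. The extra remarks you include (that $x_1\c_1=\{x_1c:c\in\c_1\}$ and that ideals of the product ring are products of ideals) are fine but the paper treats these as immediate.
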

 \begin{proof}
 Clearly $(x_1\c_1,
   0, \cdots, 0)$ is an ideal of $O_K$.   For any element $y_1\in
   \c_1$, we may find $\mathbf{y}\in \calS$ such 
   that $\eta_1(\mathbf{y})=y_1$. Then 
$(x_1y_1,0,\cdots,0)=(x_1,0,\cdots,0)\cdot \mathbf{y}\in \calS$. 
 \end{proof}

\subsection{The order $\calR_{sp}$ is Bass when
  $d(\pi)=2$.}\label{subsec:order-Rsp-bass} 
We recall the definition of $\calR_{sp}$. Suppose that
$\pi=\pi_1^{m_1}\times \cdots \times \pi_r^{m_r}$ is a supersingular
multiple Weil $p$-number with $m_i\in \bbN$ and $\pi_i\not \sim \pi_j$.
Let $K=\prod_i K_i$ with $K_i=\qq(\pi_i)$, and
$\pi_0=(\pi_1, \ldots, \pi_r)\in K$. Then $\calR_{sp}$ is defined to
be the order $ \zz[\pi_0, \pi_0^2/p]\subseteq O_K$. Assume that $\pi$
has dimension 2 and none of $\pi_i$ is conjugate to $\sqrt{p}$. The
case $\pi=\pi_1^2$ with $\pi_1\in W^{\rm ss}_p(1)$ has already been
studied in Section~\ref{sec:sp.2}.  It remains to treat the following two
cases:
\begin{enumerate}
\item[(1)]  $\pi=\pi_1\times \pi_2$ with both $\pi_1, \pi_2\in
W_p^{\rm ss}(1)$ and $\pi_1\not\sim \pi_2$ (the nonisotypic product case); 
\item[(2)] $\pi=\pi_1\in W_p^{\rm
  ss}(2)$ and $\pi_1\not\sim \sqrt{p}$ (the nonreal simple case). 
\end{enumerate}
The first case occurs only when 
\begin{equation}
  \label{eq:1}
p=2, 3,\quad\text{ and } \quad \pi=\sqrt{p}\zeta_4
\times (\pm \sqrt{p}\zeta_{4p}),\quad \text{or}\quad \sqrt{p}\zeta_{4p}\times
(-\sqrt{p}\zeta_{4p}).  
\end{equation}
In the second case, the supersingular Weil $p$-numbers of
 dimension 2 distinct from $\pm \sqrt{p}$ are 
 \begin{equation}
   \label{eq:2}
   \sqrt{p} \zeta_3,\ \pm \sqrt{p} \zeta_5\ (p=5),
\  \sqrt{p}\zeta_8\ (p\neq 2),\  \sqrt{p}\zeta_{12} \ (p\neq3),
\ \pm\sqrt{p}\zeta_{24}\ (p=2). 
 \end{equation}

 \begin{lem}\label{lem:Rsp-product-case}
Assume $p=2$ or $3$. 
If $\pi=\sqrt{p}\zeta_4
\times (\pm \sqrt{p}\zeta_{4p})$, then 
\[\calR_{sp}=\zz[(\sqrt{-p}, 0),
(0, 1+\zeta_{2p})]\subset \qq(\sqrt{-p})\times\qq(\zeta_{2p})=K.\]
If $\pi=\sqrt{p}\zeta_{4p}\times (-\sqrt{p}\zeta_{4p})$, then 
\[\calR_{sp}=\zz[(2(1+\zeta_{2p}, 0),
(\zeta_{2p}, \zeta_{2p})]\subset \qq(\zeta_{2p})\times\qq(\zeta_{2p})=K.\]
 \end{lem}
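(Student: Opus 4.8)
The plan is to make the two generators of $\calR_{sp}$ completely explicit, using that $\calR_{sp}=\zz[\pi_0,\pi_0^2/p]$ with $\pi_0=(\pi_1,\pi_2)$ (as recalled above), and then to verify each claimed equality of $\zz$-algebras by a pair of inclusions. The one computational input is the cyclotomic identity $(1+\zeta_{2p})^2=p\,\zeta_{2p}$, which is available precisely because $p\in\{2,3\}$: for $p=2$ it reads $(1+\zeta_4)^2=2\zeta_4$ (since $\Phi_4(t)=t^2+1$), and for $p=3$ it reads $(1+\zeta_6)^2=3\zeta_6$ (since $\Phi_6(t)=t^2-t+1$). Comparing squares, $(\sqrt{p}\zeta_{4p})^2=p\zeta_{4p}^2=p\zeta_{2p}=(1+\zeta_{2p})^2$, so with $\zeta_n=e^{2\pi i/n}$ and $\sqrt{p}>0$ we obtain the identity $\sqrt{p}\zeta_{4p}=1+\zeta_{2p}$ in $\qq(\zeta_{2p})$; in particular $\qq(\sqrt{p}\zeta_{4p})=\qq(\zeta_{2p})$ and $(\sqrt{p}\zeta_{4p})^2/p=\zeta_{2p}$. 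Likewise $\sqrt{p}\zeta_4=\sqrt{-p}$, with $(\sqrt{p}\zeta_4)^2/p=-1$.

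For $\pi=\sqrt{p}\zeta_4\times(\pm\sqrt{p}\zeta_{4p})$ this makes $\pi_0=(\sqrt{-p},\pm(1+\zeta_{2p}))$ and $\pi_0^2/p=(-1,\zeta_{2p})$. I would then set $\beta:=(1,1)+\pi_0^2/p=(0,1+\zeta_{2p})$ and $\alpha:=\pi_0\mp\beta=(\sqrt{-p},0)$; both lie in $\calR_{sp}$, so $\zz[(\sqrt{-p},0),(0,1+\zeta_{2p})]\subseteq\calR_{sp}$. For the reverse inclusion, $\pi_0=\alpha\pm\beta$ and $\pi_0^2/p=\beta-(1,1)$ both lie in $\zz[\alpha,\beta]$, hence $\calR_{sp}=\zz[\pi_0,\pi_0^2/p]\subseteq\zz[(\sqrt{-p},0),(0,1+\zeta_{2p})]$. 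This yields the first assertion.

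For $\pi=\sqrt{p}\zeta_{4p}\times(-\sqrt{p}\zeta_{4p})$ we get $\pi_0=(1+\zeta_{2p},-(1+\zeta_{2p}))$ and $\pi_0^2/p=(\zeta_{2p},\zeta_{2p})$, and a one-line computation shows $\pi_0+(1,1)+\pi_0^2/p=(2(1+\zeta_{2p}),0)$. Thus $(2(1+\zeta_{2p}),0)\in\calR_{sp}$, so $\zz[(2(1+\zeta_{2p}),0),(\zeta_{2p},\zeta_{2p})]\subseteq\calR_{sp}$; conversely $\pi_0=(2(1+\zeta_{2p}),0)-(1,1)-\pi_0^2/p$ and $\pi_0^2/p$ both lie in $\zz[(2(1+\zeta_{2p}),0),(\zeta_{2p},\zeta_{2p})]$, giving equality. (If the primitive $4p$-th root is chosen so that $\sqrt{p}\zeta_{4p}=-(1+\zeta_{2p})$, applying the same argument to $-\pi_0$, or equivalently swapping the two factors of $K$, recovers the stated generator.) There is no substantive obstacle here: every identity is forced by $(1+\zeta_{2p})^2=p\zeta_{2p}$, and the only point requiring care is keeping the choices of $\sqrt{p}$ and of the roots of unity consistent throughout.
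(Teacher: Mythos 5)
Your proof is correct and follows essentially the same route as the paper: both rest on the identity $\sqrt{p}\zeta_{4p}=1+\zeta_{2p}$ for $p\in\{2,3\}$ (which the paper asserts without verification and you justify via $(1+\zeta_{2p})^2=p\zeta_{2p}$) and then perform the same generator manipulations on $\zz[\pi_0,\pi_0^2/p]$, which the paper records as a chain of ring equalities and you spell out as two inclusions. No issues.
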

 \begin{proof}
Note that $\sqrt{p}\zeta_{4p}=1+\zeta_{2p}$ when $p=2$ or $3$. If $\pi=
\sqrt{p}\zeta_4
\times (\pm \sqrt{p}\zeta_{4p})$, then
\[
\begin{split}
  \calR_{sp}&=\zz[(\sqrt{-p},\pm \sqrt{p}\zeta_{4p}), (-1,
  \zeta_{2p})]=\zz[(\sqrt{-p},\pm \sqrt{p}\zeta_{4p}), (0,
  1+\zeta_{2p})]\\
 &=\zz[(\sqrt{-p}, 0),
(0, 1+\zeta_{2p})]. 
\end{split}
\]
If $\pi=\sqrt{p}\zeta_{4p}\times (-\sqrt{p}\zeta_{4p})$, then 
\[
\begin{split}
  \calR_{sp}&=\zz[(\sqrt{p}\zeta_{4p}, -\sqrt{p}\zeta_{4p}),
  (\zeta_{2p}, \zeta_{2p})]=\zz[(1+\zeta_{2p}, -(1+\zeta_{2p})),
  (\zeta_{2p}, \zeta_{2p})]\\
 &=\zz[(2(1+\zeta_{2p}), 0), (\zeta_{2p}, \zeta_{2p})]. \qedhere
\end{split}
\]
 \end{proof}


 \begin{prop}
The order $\calR_{sp}$ is a Bass order for every supersingular multiple Weil
$p$-number $\pi \in MW_p^{\rm ss}(2)$ distinct from $\pm\sqrt{p}$. 
 \end{prop}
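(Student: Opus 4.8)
The plan is to reduce the statement to the criterion of Borevich and Faddeev recalled above, namely that $\calR_{sp}\subseteq O_K$ is Bass if and only if $O_K/\calR_{sp}$ is cyclic as an $\calR_{sp}$-module. Since $\Z\subseteq\calR_{sp}$, and more generally since in several cases $\calR_{sp}$ will be seen to contain a convenient subring $O_F$, it will suffice to exhibit a single element generating $O_K/\calR_{sp}$ over $\Z$ or over $O_F$. I would carry this out over the three possible shapes of $\pi\in MW_p^{\rm ss}(2)\setminus\{\pm\sqrt{p}\}$: the isotypic product $\pi=\pi_1\times\pi_1$ with $\pi_1\in W_p^{\rm ss}(1)$, the nonisotypic products of (\ref{eq:1}), and the nonreal simple Weil numbers of (\ref{eq:2}).

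The isotypic case is immediate: as already noted in Section~\ref{sec:sp.2}, $\calR_{sp}=\Z[\pi_1,\pi_1^2/p]$ is then an order in the imaginary quadratic field $\Q(\pi_1)$, hence of the form $\Z+fO_{\Q(\pi_1)}$, and $O_{\Q(\pi_1)}/(\Z+fO_{\Q(\pi_1)})\cong\Z/f\Z$ is cyclic. A batch of the remaining cases is settled because $\calR_{sp}=O_K$ is maximal: this holds for $\pi=\sqrt{p}\zeta_8$ ($p\neq2$); for $\pi=\pm\sqrt{p}\zeta_5$ ($p=5$) and $\pi=\pm\sqrt{p}\zeta_{24}$ ($p=2$) by Remark~\ref{rem:critical-max-order}, since $5$ and $24$ are critical at $5$ and $2$; and, by the index computations recorded in Section~\ref{sec:sp.2}, for $\pi=\sqrt{2}\zeta_3$, $\pi=\sqrt{2}\zeta_{12}$, for $\pi=\sqrt{p}\zeta_{12}$ with $p\equiv 1\pmod 4$, and for $\pi=\sqrt{p}\zeta_3$ with $p\equiv 3\pmod 4$, $p>3$. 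For the nonisotypic products I would feed the explicit shapes of $\calR_{sp}$ from Lemma~\ref{lem:Rsp-product-case} into Lemma~\ref{lem:criterion-Bass-prod}: one checks directly that $\calR_{sp}$ projects onto both maximal orders $O_{K_1}$ and $O_{K_2}$ when $\pi=\sqrt{2}\zeta_4\times(\pm\sqrt{2}\zeta_8)$, $\pi=\sqrt{2}\zeta_8\times(-\sqrt{2}\zeta_8)$, or $\pi=\sqrt{3}\zeta_{12}\times(-\sqrt{3}\zeta_{12})$, so these are Bass.

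That leaves the non-maximal nonreal simple cases — $\pi=\sqrt{3}\zeta_3$, $\pi=\sqrt{p}\zeta_3$ with $p\equiv 1\pmod 4$ (where $\calR_{sp}=\Z[\sqrt{p},\zeta_3]$), and $\pi=\sqrt{p}\zeta_{12}$ with $p\equiv 3\pmod 4$, $p\neq3$ (where $\calR_{sp}=\Z[\sqrt{-p},\zeta_6]$) — and the one product case $\pi=\sqrt{3}\zeta_4\times(\pm\sqrt{3}\zeta_{12})$. For the simple ones the key point is that $\calR_{sp}$ contains $\Z[\zeta_3]=\Z[\zeta_6]=O_{\Q(\sqrt{-3})}$, a principal ideal domain, and is free of rank $2$ over it (spanned by $1$ and $\sqrt{p}$ or $\sqrt{-p}$); hence $O_K$ is also $O_{\Q(\sqrt{-3})}$-free of rank $2$, and the Smith normal form over $O_{\Q(\sqrt{-3})}$ gives $O_K/\calR_{sp}\cong O_{\Q(\sqrt{-3})}/\mathfrak{d}_1\oplus O_{\Q(\sqrt{-3})}/\mathfrak{d}_2$ with $\mathfrak{d}_1\mid\mathfrak{d}_2$ and $N(\mathfrak{d}_1\mathfrak{d}_2)=[O_K:\calR_{sp}]\in\{3,4\}$. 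Since $3$ is ramified and $2$ is inert in $\Q(\sqrt{-3})$ and there are no ideals of norm $2$, the ideal $\mathfrak{d}_1\mathfrak{d}_2$ is prime, forcing $\mathfrak{d}_1=O_{\Q(\sqrt{-3})}$, so $O_K/\calR_{sp}$ is cyclic over $O_{\Q(\sqrt{-3})}$ and $\calR_{sp}$ is Bass. For the remaining product case, $\calR_{sp}=\Z[(\sqrt{-3},0),(0,1+\zeta_6)]$ projects onto $O_{K_2}=\Z[\zeta_6]$ but only onto $\Z[\sqrt{-3}]\subsetneq O_{K_1}$; the isomorphism (\ref{eq:isom-module}) from the proof of Lemma~\ref{lem:criterion-Bass-prod} still identifies $O_K/\calR_{sp}$ with $O_{K_1}/(O_{K_1}\cap\calR_{sp})$ as $\calR_{sp}$-modules, and a short computation with an integral basis shows $O_{K_1}\cap\calR_{sp}=3\Z+\Z\sqrt{-3}$ and $O_K/\calR_{sp}\cong\Z/6\Z$, again cyclic.

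The main obstacle is precisely the handling of these non-maximal orders: there the Bass property does not drop out of a ready-made lemma, and one is forced to pin down $O_K/\calR_{sp}$ as an $\calR_{sp}$-module. The PID/Smith-normal-form argument above is the cleanest route — it works because the relevant quadratic subfield $\Q(\sqrt{-3})$ has class number one and the index $[O_K:\calR_{sp}]$ is a prime power whose residue prime is inert or ramified — but in any case the explicit integral bases needed as a fallback are the same ones produced in Sections~\ref{subsec:supord-Rsp-prod-case} and~\ref{subsec:supord-Rsp-simple-case} when the suborders of $\calR_{sp}$ and their class numbers are determined.
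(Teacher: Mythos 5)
Your proof is correct and covers every case: the isotypic products, the nonisotypic products of (\ref{eq:1}), and all of the simple Weil numbers in (\ref{eq:2}) are accounted for, and each individual step checks out (in particular, the Smith normal form computations and the identification $O_{K_1}\cap\calR_{sp}=3\Z+\Z\sqrt{-3}$ with quotient $\Z/6\Z$ are right). Your handling of the product cases matches the paper's --- surjective projections fed into Lemma~\ref{lem:criterion-Bass-prod}, plus a direct cyclicity check for $\sqrt{3}\zeta_4\times(\pm\sqrt{3}\zeta_{12})$ --- except that where the paper exhibits an explicit generator $(\zeta_6,0)$ via a unit computation, you show the quotient is $\Z/6\Z$ as an abelian group, which suffices since cyclicity over $\Z$ already gives cyclicity over $\calR_{sp}$, and is arguably cleaner. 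The genuine divergence is in the simple nonreal case: the paper disposes of all of (\ref{eq:2}) in one stroke by noting that $\calR_{sp}$ is either the maximal order $\Z[\zeta_m]$ (when $n$ is critical at $p$, by Remark~\ref{rem:critical-max-order}) or a quadratic $\Z[\zeta_m]$-order in the degree-two extension $K/\Q(\zeta_m)$, and such orders over a Dedekind domain are Bass by the cited \cite[Example 2.3]{MR794792}; no knowledge of $[O_K:\calR_{sp}]$ is needed. You instead sort these Weil numbers into maximal and non-maximal ones using the index tables, and for the three non-maximal cases run a Smith-normal-form argument over the PID $\Z[\zeta_3]$, exploiting that the index is $3$ or $4$ and that $3$ is ramified while $2$ is inert in $\Q(\sqrt{-3})$, which forces the first elementary divisor $\mathfrak{d}_1$ to be trivial. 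Your route is more elementary and self-contained, at the cost of depending on the explicit index computations and on the particular arithmetic of $\Q(\sqrt{-3})$ (it would not generalize to an index with two nontrivial ideal factors), whereas the paper's appeal to the general Bass property of quadratic orders is uniform across all the simple cases.
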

 \begin{proof} We only need to consider the cases where $\pi$ is not
   of the form $\pi_1^2$ with $\pi_1\in W_p^{\rm ss}(1)$. 
    Suppose that
   $\pi=\pm \sqrt{p}\zeta_n\in W_p^{\rm ss}(2)$ is one of the Weil
   $p$-numbers listed in (\ref{eq:2}), and $m$ is defined as in
   (\ref{eq:def-of-m}).
   If $n$ is critical at $p$, then $\calR_{sp}$ equals to the maximal order $\Z[\zeta_m]$
   in $K=\Q(\zeta_m)$ by Remark~\ref{rem:critical-max-order}.
    Otherwise, $[K:\qq(\zeta_m)]=2$ and $\calR_{sp}$ is a
   quadratic $\zz[\zeta_m]$-order, and such type of orders are Bass 
   \cite[Example 2.3]{MR794792}.   

If $p=2,3$ and $\pi=\sqrt{p}\zeta_{4p}\times (-\sqrt{p}\zeta_{4p})$,
or $p=2$ and $\pi=\sqrt{2}\zeta_4\times (\pm \sqrt{2}\zeta_8)$, then
$\calR_{sp}$
projects surjectively onto both $O_{K_1}$ and $O_{K_2}$, and hence $\calR_{sp}$ is Bass by Lemma~\ref{lem:criterion-Bass-prod}. 

Lastly, suppose that $p=3$ and
$\pi=\sqrt{3}\zeta_4\times (\pm \sqrt{3}\zeta_{12})$. Then
$\eta_1(\calR_{sp})=\zz[\sqrt{-3}]$, a suborder of index 2 in
$O_{K_1}=\zz[\zeta_6]$, while $\eta_2(\calR_{sp})=\zz[\zeta_6]=O_{K_2}$. So by
(\ref{eq:isom-module}), to show that $\calR_{sp}$ is Bass, it
is enough to prove that $O_{K_1}/(O_{K_1}\cap \calR_{sp})$ is a cyclic
$\calR_{sp}$-module. Note that $O_{K_1}\subset O_{K_1}\times O_{K_2}$ is generated
by $(1,0)$ and $(\zeta_6,0)$ over $\zz$, and 
\[\calR_{sp}(\zeta_6, 0)\ni (-1+\sqrt{-3},
-1)\cdot(\zeta_6,0)=(1,0)+(\sqrt{-3},0)^2\equiv (1,0)
\pmod{O_{K_1}\cap\calR_{sp}}. \] 
Hence  $O_{K_1}/(O_{K_1}\cap \calR_{sp})$ is a cyclic
$\calR_{sp}$-module generated by $(\zeta_6, 0)$. 
 \end{proof}

\subsection{Suporders of $\calR_{sp}$ and class numbers: the
  nonisotypic product case}\label{subsec:supord-Rsp-prod-case} Assume that
$p=2$ or $3$ and $\pi=\pi_1\times \pi_2$ is a supersingular multiple
Weil $p$-number of 
dimension 2 listed in (\ref{eq:1}).  Using
Lemma~\ref{lem:ideal-in-Rsp} and Lemma~\ref{lem:Rsp-product-case}, one
may easily find an $O_K$-lattice $\a$ contained in $\calR_{sp}$ and
compute the quotient rings $O_K/\a$ and $\calR_{sp}/\a$. We obtain the
following 
table (For simplicity, we set $i=\zeta_4=\sqrt{-1}$). 

\smallskip 
\renewcommand{\arraystretch}{1.1}
\begin{tabular}{|c|c|c|c|}
\hline
$\pi=\pi_1\times \pi_2$ &   $\a\subset \calR_{sp}$ & $O_K/\a$ &
$\calR_{sp}/\a$ \\ \hline 
$ \sqrt{2}\zeta_4\times \pm \sqrt{2}\zeta_8$  
& $\sqrt{-2}O_{K_1}\times (1+i)O_{K_2}$ & $(\F_2)^2$  &  $\scrD_2$ \\  \hline
$\sqrt{2}\zeta_8\times - \sqrt{2}\zeta_8$ & 
                                       $(2(1+i)O_{K_1})^2$  &
$(\Z[i]/(1+i)^3)^2$ & $\scrD_8$ \\ \hline
$ \sqrt{3}\zeta_4\times \pm \sqrt{3}\zeta_{12} $  & 
 $(2\sqrt{-3})O_{K_1}\times \sqrt{-3}O_{K_2}$ &
$\F_4\times (\F_3)^2$ &
$\F_2\times \scrD_3$\\ \hline
$ \sqrt{3}\zeta_{12}\times -\sqrt{3}\zeta_{12}$ &
 $(2\sqrt{-3})O_{K_1}\times (2\sqrt{-3})O_{K_2}$ &
 $(\F_4\times \F_3)^2$ &
 $\scrD_{12}$ \\ \hline
\end{tabular} \\[3pt]
\renewcommand{\arraystretch}{1}
Here $\scrD_2$, $\scrD_8$, $\scrD_3$, and $\scrD_{12}$ denote
the
diagonal in $(\F_2)^2$,  $(\Z[i]/(1+i)^3)^2$, $(\F_3)^2$, and
$(\F_4\times\F_3)^2$ respectively.  


It is an exercise to show that
$O_K^\times$ maps surjectively onto $(O_K/\a)^\times$ in all the above
cases. By Lemma~\ref{lem:classno-suborder=maxorder},  $h(B)=h(O_K)$
for every order $B$ with $\calR_{sp}\subseteq B \subseteq O_K$. Note
that $h(O_K)=h(O_{K_1})h(O_{K_2})=1$ since both $\Z[i]$ and
$\Z[\zeta_6]$ have class number 1.  We obtain the following
proposition. 
\begin{prop}
  Assume that
$p=2$ or $3$ and $\pi=\pi_1\times \pi_2$ is given in
(\ref{eq:1}). Then any suporder $B$ of $\calR_{sp}$ has class number $1$. 
\end{prop}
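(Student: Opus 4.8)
The plan is to deduce the statement for \emph{all} suborders $B$ at once, by reducing it to a single surjectivity of unit groups and then quoting Lemma~\ref{lem:classno-suborder=maxorder}. Since every suborder $B$ of $\calR_{sp}$ satisfies $\calR_{sp}\subseteq B\subseteq O_K$, any $O_K$-lattice $\a$ contained in $\calR_{sp}$ is automatically contained in $B$ as well; so it suffices to produce \emph{one} nonzero ideal $\a$ of $O_K$ with $\a\subseteq\calR_{sp}$ for which the reduction map $O_K^\times\to (O_K/\a)^\times$ is surjective. Granting this, Lemma~\ref{lem:classno-suborder=maxorder} immediately yields $h(B)=h(O_K)$ for every such $B$, and $h(O_K)=h(O_{K_1})\,h(O_{K_2})=1$, because in all four cases of (\ref{eq:1}) each factor $K_i$ is one of $\qq(\sqrt{-1})$, $\qq(\sqrt{-2})$, $\qq(\sqrt{-3})$, all of which have class number $1$.

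To construct $\a$ in each case I would start from the explicit generators of $\calR_{sp}$ furnished by Lemma~\ref{lem:Rsp-product-case} (using the identity $\sqrt{p}\zeta_{4p}=1+\zeta_{2p}$ valid for $p=2,3$), and then feed the elements of the form $(x_1,0)$, resp. $(0,x_2)$, that lie in $\calR_{sp}$ into Lemma~\ref{lem:ideal-in-Rsp}: multiplying such an element by a suitable ideal $\c_i\subseteq\eta_i(\calR_{sp})$ produces an ideal of $O_K$ sitting inside $\calR_{sp}$, and taking the product over the two factors gives the desired $\a$. This recovers precisely the four lattices $\a$ recorded in the table of Section~\ref{subsec:supord-Rsp-prod-case}; for instance one may take $\a=\sqrt{-2}\,O_{K_1}\times(1+\zeta_4)O_{K_2}$ when $\pi=\sqrt{2}\zeta_4\times(\pm\sqrt{2}\zeta_8)$, and $\a=(2\sqrt{-3})O_{K_1}\times(2\sqrt{-3})O_{K_2}$ when $\pi=\sqrt{3}\zeta_{12}\times(-\sqrt{3}\zeta_{12})$. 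A short direct computation then identifies $O_K/\a$ (a product of copies of $\F_2$, $\F_3$, $\F_4$, and $\Z[\zeta_4]/(1+\zeta_4)^3$) and the subring $\calR_{sp}/\a$ exactly as listed; only the inclusion $\a\subseteq\calR_{sp}$ and the shape of $O_K/\a$ are needed for the argument.

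The substance is the verification that $O_K^\times\to(O_K/\a)^\times$ is surjective in each of the four cases, and this is where I expect the only real work to lie. I would argue componentwise, using $O_K^\times=O_{K_1}^\times\times O_{K_2}^\times$ with $O_{K_i}^\times$ cyclic of order $4$ for $\qq(\sqrt{-1})$, of order $6$ for $\qq(\sqrt{-3})$, and of order $2$ for $\qq(\sqrt{-2})$: one checks that $i=\zeta_4$ generates $(\Z[\zeta_4]/(1+\zeta_4)^3)^\times$ (cyclic of order $4$), that $\zeta_6$ maps onto $\F_4^\times$ and onto $\F_3^\times$, and that $-1$ generates $\F_3^\times$ while $\F_2^\times$ is trivial; since $(O_K/\a)^\times$ is the product of these local unit groups, surjectivity follows in each instance. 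The potential obstacle is purely one of careful bookkeeping, and it is exactly here that the hypothesis $p\in\{2,3\}$ is essential: the residue fields occurring are so small, and the unit groups of $\qq(\sqrt{-1})$ and $\qq(\sqrt{-3})$ so large relative to these residue characteristics, that the reduction map is forced to be onto — an accident that disappears for larger $p$, where this shortcut would no longer apply. Combining the surjectivity with Lemma~\ref{lem:classno-suborder=maxorder} and $h(O_K)=1$ then finishes the proof.
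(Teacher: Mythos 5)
Your proposal is correct and follows essentially the same route as the paper: one exhibits the same four $O_K$-lattices $\a\subseteq\calR_{sp}$ via Lemmas~\ref{lem:Rsp-product-case} and \ref{lem:ideal-in-Rsp}, checks that $O_K^\times\to(O_K/\a)^\times$ is surjective in each case, and concludes by Lemma~\ref{lem:classno-suborder=maxorder} together with $h(O_K)=h(O_{K_1})h(O_{K_2})=1$. Your componentwise verification of the surjectivity (which the paper leaves as an exercise) is accurate, including the identification of $(\Z[\zeta_4]/(1+\zeta_4)^3)^\times$ as cyclic of order $4$ generated by the image of $\zeta_4$.
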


It remains to list all suporders $B$ of $\calR_{sp}$ for each
$\pi$. We recall the convention in Section~\ref{sec:sp.2} that a suporder of
$\calR_{sp}$ with index $j>1$ in $O_K$ is denoted by $B_j$. Our
calculation will show that for those $\pi$ considered in this
subsection, if such an order exists, then it is
unique.  So there is no ambiguity in this notation if $\pi$ is clear
from the context.  We separate into cases. 

\smallskip

\noindent\textbf{Case} $\pi=\sqrt{2}\zeta_4\times \pm
\sqrt{2}\zeta_8$. Since $[O_K:\calR_{sp}]=[O_K/\a:\calR_{sp}/\a]=2$,
there are no other suporders of $\calR_{sp}$ besides $\calR_{sp}$
and $O_K$.  

\noindent\textbf{Case} $\pi=\sqrt{3}\zeta_4\times \pm
\sqrt{3}\zeta_{12}$. We have $[O_K:\calR_{sp}]=[\F_4\times (\F_3)^2:
\F_2\times \scrD_3]=6$. There are two rings properly intermediate to the
inclusion $\F_2\times \scrD_3 \subset \F_4\times (\F_3)^2$, namely
$\F_4\times \scrD_3$ and $\F_2\times (\F_3)^2$.  Under the inclusion-preserving
correspondence between suborders of $O_K$ containing $\a$ and subrings
of $O_K/\a$, we have
\begin{align*}
  B_3:=\Z[(\sqrt{-3},0), (\zeta_6, \zeta_6)]=\Z[(1+\zeta_6, 0), (0, 1+\zeta_6)]&\longleftrightarrow \F_4\times
                                          \scrD_3,\\
  B_2:=\Z[\sqrt{-3}]\times \Z[\zeta_6]&\longleftrightarrow \F_2\times (\F_3)^2.
\end{align*}

The remaining two cases are best seen in the light of the following
lemma. 
\begin{lem}\label{lem:subrings-containing-diagonal}
  Let $R$ be a commutative ring, and $\scrD$ be the diagonal of $R^2$.
  Every subring $S$ of $R^2$  containing $\scrD$ decomposes uniquely as
  $\scrD\oplus (I_S, 0)$, where $I_S$ is an ideal of $R$.  In
  particular, there is an inclusion-preserving bijective
  correspondence between subrings of $R^2$ containing $\scrD$ and
  ideals of $R$. 
\end{lem}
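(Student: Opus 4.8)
The plan is to exhibit the ideal $I_S$ explicitly and then verify the decomposition, its uniqueness, and the claimed correspondence in turn. First I would set $I_S := \{x \in R : (x,0) \in S\}$. This is clearly an additive subgroup of $R$; to see it is an ideal, take $x \in I_S$ and $r \in R$, note that $(r,r) \in \scrD \subseteq S$, and multiply inside $S$ to get $(r,r)(x,0) = (rx,0) \in S$, so $rx \in I_S$. (This step, together with the closure computation below, is the only place the ideal hypothesis and commutativity of $R$ are actually used.)

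Next I would establish $S = \scrD \oplus (I_S, 0)$, writing $(I_S,0)$ for $\{(x,0) : x \in I_S\}$. The inclusion $\scrD + (I_S,0) \subseteq S$ is immediate. For the reverse, given $(a,b) \in S$, subtract the diagonal element $(b,b) \in S$ to obtain $(a-b,0) \in S$, whence $a-b \in I_S$ and $(a,b) = (b,b) + (a-b,0)$ lies in $\scrD + (I_S,0)$. Directness is trivial, since an element of $\scrD \cap (I_S,0)$ has equal coordinates and vanishing second coordinate, hence is $0$. Uniqueness of the ideal appearing in the decomposition then follows: if $S = \scrD \oplus (J,0)$ for some ideal $J$, then $(x,0) \in S$ forces $x \in J$ (write $(x,0)$ as a diagonal element plus something in $(J,0)$ and compare second coordinates), so $J = I_S$.

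Finally, for the bijective correspondence, the map $S \mapsto I_S$ is injective because $S$ is recovered as $\scrD \oplus (I_S,0)$, and it is inclusion-preserving in both directions by the same formula. For surjectivity I would check that for an arbitrary ideal $I \subseteq R$ the subset $S_I := \scrD + (I,0)$ is a subring of $R^2$ containing $\scrD$: it is an additive subgroup containing $(1,1)$, and the product of $(r,r)+(x,0)$ with $(s,s)+(y,0)$ equals $(rs,rs) + (ry+sx+xy,0)$, which lies in $S_I$ precisely because $ry+sx+xy \in I$. One then observes $I_{S_I} = I$. The whole argument is elementary and I foresee no real obstacle; the only step meriting a moment's care is this last closure-under-multiplication verification, which is exactly where the ideal property of $I$ is essential.
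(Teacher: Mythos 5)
Your proof is correct and follows essentially the same route as the paper: you identify $I_S$ as the set of $x$ with $(x,0)\in S$ (the paper writes this as $S\cap(R,0)$ and notes $S$ is an $R$-submodule via $\scrD$), verify the direct-sum decomposition, and check that $\scrD\oplus(I,0)$ is a subring for any ideal $I$. Your version simply spells out the closure computations that the paper leaves implicit.
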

\begin{proof}
  Every subring of $R^2$ containing $\scrD$ is naturally an
  $R$-submodule of $R^2$.  So the intersection $(I_S,0):=S\cap (R, 0)$
  is again an $R$-submodule of $R^2$. Equivalently, $I_S$ is an ideal
  of $R$.  Clearly, we have $S=\scrD\oplus (I_S,0)$. Conversely, for any ideal $I\subseteq R$, the direct sum
  $\scrD\oplus (I,0)$ is a subring of $R^2$.  The correspondence is
  established. 
\end{proof}

By Lemma~\ref{lem:Rsp-product-case}, if $\pi=\sqrt{p}\zeta_{4p}\times
-\sqrt{p}\zeta_{4p}$ with $p=2$ or $3$, then
$O_K=\Z[\zeta_{2p}]^2$, and 
\[\calR_{sp}=\Z[(2(1+\zeta_{2p}), 0),
(\zeta_{2p}, \zeta_{2p})]=\scrD\oplus
(2(1+\zeta_{2p})\Z[\zeta_{2p}],0).\]

\noindent\textbf{Case} $\pi=\sqrt{2}\zeta_8\times -
\sqrt{2}\zeta_8$.  We have $2(1+i)\Z[i]=(1+i)^3\Z[i]$. So by
Lemma~\ref{lem:subrings-containing-diagonal}, the suborders of $O_K$
properly containing $\calR_{sp}$ and distinct from $O_K$ are 
\begin{align*}
  B_4:=\Z[(i,i), (2,0)] &\longleftrightarrow  (1+i)^2\Z[i]=2\Z[i],\\
  B_2:=\Z[(i,i), (1+i,0)] &\longleftrightarrow  (1+i)\Z[i].
\end{align*}
\noindent\textbf{Case} $\pi=\sqrt{3}\zeta_{12}\times -
\sqrt{3}\zeta_{12}$.  In this case, the ideal 
$2(1+\zeta_6)\Z[\zeta_6]$ factors as the product of the prime ideals
$2\Z[\zeta_6]$ and $\sqrt{-3}\Z[\zeta_6]$. The suborders of $O_K$
properly containing $\calR_{sp}$ and distinct from $O_K$ are 
\begin{align*}
  B_4:=\Z[(\zeta_6,\zeta_6), (2,0)] &\longleftrightarrow  2\Z[\zeta_6],\\
  B_3:=\Z[(\zeta_6,\zeta_6), (\sqrt{-3},0)] &\longleftrightarrow  \sqrt{-3}\Z[\zeta_6].
\end{align*}

\subsection{Suporders of $\calR_{sp}$ and class numbers: the nonreal
  simple case}\label{subsec:supord-Rsp-simple-case}
 Assume that $\pi$ is a supersingular Weil $p$-number of
dimension $2$ listed in (\ref{eq:2}). Only the case
$\pi=\sqrt{p}\zeta_{12}$ with $p\neq 3$ needs to be studied, as the
 rest have already been covered in
 Section~\ref{sec:sp.2}. 

 If $\pi=\sqrt{p}\zeta_{12}$, we have $K=\Q(\sqrt{-p}, \sqrt{-3})$,
 and $\calR_{sp}=\Z[\sqrt{-p}, \zeta_6]$. Since the discriminants of
 $\Q(\sqrt{-3})$ and $\Q(\sqrt{-p})$ are coprime, $O_K$ is the
 compositum of $\Z[\zeta_6]$ and $O_{\Q(\sqrt{-p})}$.  If $p=2$ or
 $p\equiv 1\pmod{4}$, then $O_{\Q(\sqrt{-p})}=\Z[\sqrt{-p}]$, and
 $\calR_{sp}$ is the maximal order in $K$. We assume that
 $p\equiv 3\pmod{4}$ and $p\neq 3$ for the rest of this
 subsection. Note that $2O_K\subseteq \calR_{sp}$, and 
$\calR_{sp}/2O_K=\Z[\zeta_6]/(2)\simeq \F_4$, which embeds into
$O_K/2O_K\simeq \F_4\oplus \F_4$ diagonally. It follows that
$\calR_{sp}$ and $O_K$ are the only orders in $O_K$ containing 
$\calR_{sp}$. By (\ref{eq:classNo-order}),
$h(\calR_{sp})=3h(O_K)/[O_K^\times : \calR_{sp}^\times]$. It remains
to calculate the index $[O_K^\times : \calR_{sp}^\times]$. 







 \begin{lem}
   Let $p_1$ and $p_2$ be distinct primes with $p_1\equiv p_2 \equiv 3
   \pmod{4}$, and $\epsilon$ be the fundamental unit of
   $F=\qq(\sqrt{p_1p_2})$.  Then $\sqrt{-\epsilon}\in
   K=\qq(\sqrt{-p_1}, \sqrt{-p_2})$, and
   $O_K^\times=\dangle{\sqrt{-\epsilon}}\times \bmu_K$,  the direct
   product of the free abelian group generated by $\sqrt{-\epsilon}$
   and the group $\bmu_K$ of roots of unity in $K$.   Moreover, if $\epsilon\in
   \zz[\sqrt{p_1p_2}]$, then $\sqrt{-\epsilon}$ lies in the
   $\zz$-module $\zz\sqrt{-p_1}+\zz\sqrt{-p_2} \subset O_K$; otherwise
   $\sqrt{-\epsilon}\equiv (\sqrt{-p_1}+\sqrt{-p_2})/2
   \pmod{\zz\sqrt{-p_1}+\zz\sqrt{-p_2}}$. 
 \end{lem}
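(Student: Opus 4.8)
The plan is to exploit that $K=\qq(\sqrt{-p_1},\sqrt{-p_2})$ is an imaginary biquadratic field with maximal totally real subfield $F=\qq(\sqrt{p_1p_2})$, and that $O_K^\times$ has unit rank $1$, just like $O_F^\times$; up to roots of unity the whole statement amounts to showing that $\sqrt{-\epsilon}$ generates $O_K^\times$, i.e. that the Hasse unit index of $K$ is $2$. Throughout set $m=p_1p_2$; since $p_1\equiv p_2\equiv 3\pmod 4$ we have $m\equiv 1\pmod 4$, so $O_F=\zz[(1+\sqrt m)/2]$. I would carry this out in four steps: (i) $N_{F/\qq}(\epsilon)=1$; (ii) produce $\sqrt{-\epsilon}$ explicitly inside $K$; (iii) deduce the unit group; (iv) read off the congruence.

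Step (i) is short: if $N_{F/\qq}(\epsilon)=-1$, writing $\epsilon=(x+y\sqrt m)/2$ gives $x^2-my^2=-4$, hence $-1$ is a square modulo the odd prime $p_1\mid m$, contradicting $p_1\equiv 3\pmod 4$. So $N_{F/\qq}(\epsilon)=1$; in particular $\epsilon$ is totally positive and $O_F$ has no unit of norm $-1$.

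Step (ii) is the heart of the argument and the step I expect to be the main obstacle. Write $\epsilon=(t+u\sqrt m)/2$ with $t,u>0$ and $t\equiv u\pmod 2$, so $(t-2)(t+2)=t^2-4=u^2m$. If $t$ is odd the factors $t-2,t+2$ are coprime; if $t$ is even one checks, reducing mod $8$ (using $m\equiv 1$ or $5\pmod 8$), that $4\mid u$ and $t\equiv 2\pmod 4$, whence dividing the identity by $4$ yields coprime integers $e_1,e_2$ with $e_1e_2=(u/4)^2m$. In either presentation $p_1$ and $p_2$ occur to odd multiplicity, so each is concentrated in one of the two coprime factors. The crucial point is to exclude the possibility that $p_1$ and $p_2$ lie in the same factor: then one factor is a perfect square and the other is $m$ times a perfect square, and assembling the two square roots gives an element $\mu\in O_F$ which is either a unit of norm $-1$ or satisfies $\mu^2=\epsilon$, and both are impossible by step (i) or by fundamentality of $\epsilon$. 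Hence the coprime factors carry $p_1$ and $p_2$ separately: say $t-2=p_i s^2$ and $t+2=p_j s'^2$ with $\{i,j\}=\{1,2\}$ (with an extra factor $4$ on each side in the even case). Then $ss'=u$ and $p_is^2+p_js'^2=2t$, so $\beta:=(s\sqrt{-p_i}+s'\sqrt{-p_j})/2\in K$ and a direct computation using $\sqrt{-p_i}\sqrt{-p_j}=-\sqrt m$ gives $\beta^2=-\epsilon$. Thus $\sqrt{-\epsilon}\in K$.

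For step (iii): $\sqrt{-\epsilon}$ is a root of the monic polynomial $X^2+\epsilon$ with unit constant term, hence a unit of $O_K$, and it has infinite order because $|\sqrt{-\epsilon}|^2=\epsilon\neq 1$ at an archimedean place. Since $K$ is totally imaginary of degree $4$, $O_K^\times/\bmu_K\cong\zz$; write $\sqrt{-\epsilon}=\zeta\eta^k$ with $\zeta\in\bmu_K$, $k\in\zz$, and $\eta$ generating $O_K^\times/\bmu_K$. Squaring, then applying the complex conjugation of $K$ (which fixes $\epsilon$, as $\epsilon$ lies in the real subfield $F$) and the relative norm $N_{K/F}$, I get $\epsilon^2=N_{K/F}(\eta)^{2k}$; but $N_{K/F}(\eta)=\eta\bar\eta$ is a totally positive unit of $O_F$, hence a power $\epsilon^j$ with $j\neq 0$ (otherwise $\eta$ would be a root of unity). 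Thus $2jk=2$, so $k=\pm 1$ and $\sqrt{-\epsilon}$ generates $O_K^\times/\bmu_K$; therefore $O_K^\times=\dangle{\sqrt{-\epsilon}}\times\bmu_K$, the product being direct since $\sqrt{-\epsilon}$ has infinite order. Finally, step (iv) follows by inspecting $\beta$: when $\epsilon\in\zz[\sqrt m]$ we are in the even case and $\beta=s\sqrt{-p_i}+s'\sqrt{-p_j}\in\zz\sqrt{-p_1}+\zz\sqrt{-p_2}$; when $\epsilon\notin\zz[\sqrt m]$ we are in the odd case, $s$ and $s'$ are odd, and $\beta=(s\sqrt{-p_i}+s'\sqrt{-p_j})/2\equiv(\sqrt{-p_1}+\sqrt{-p_2})/2\pmod{\zz\sqrt{-p_1}+\zz\sqrt{-p_2}}$.
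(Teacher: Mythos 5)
Your proof is correct, but it reaches the key facts by a genuinely different and more self-contained route than the paper. The paper gets $\Nm_{F/\Q}(\epsilon)=+1$ by citation, quotes a classical genus-theory lemma asserting that $p_1\epsilon$ is a square in $F^\times$ (which immediately yields $\sqrt{-\epsilon}=\sqrt{p_1\epsilon}\cdot(-1)/\sqrt{-p_1}\in\Q\sqrt{-p_1}+\Q\sqrt{-p_2}$), and then pins down $O_K^\times$ by combining this with Washington's bound $[O_K^\times/\bmu_K:O_F^\times/\{\pm1\}]\le 2$ for CM fields; the final congruence is extracted $2$-adically, via the chain of equivalences $\epsilon\in\Z+2O_F\Leftrightarrow\sqrt{-\epsilon}\in\Z+2O_K$ and the explicit integral basis of $O_K$. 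You instead reprove the square-ness statement from scratch by factoring $t^2-4=mu^2$ into the coprime pieces $t\mp 2$ (resp.\ $(t\mp2)/4$ in the even case), showing each $p_i$ is concentrated in a separate factor because the alternative forces $\epsilon$ to be a square of a unit (or produces a unit of norm $-1$), and you replace the appeal to the Hasse unit index by a direct argument with $\Nm_{K/F}$ showing $\sqrt{-\epsilon}$ generates $O_K^\times/\bmu_K$. Your explicit $\beta=(s\sqrt{-p_i}+s'\sqrt{-p_j})/2$ then makes the final congruence a matter of reading off parities of $s,s'$, rather than the paper's $2$-adic bookkeeping. What the paper's route buys is brevity via citations; what yours buys is an elementary, fully explicit construction of $\sqrt{-\epsilon}$ (with coordinates) that also re-derives the quoted genus-theory input. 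The only places where you compress are the mod-$8$ analysis in the even case (which does check out: writing $\epsilon=t'+u'\sqrt m$ with $t'^2-mu'^2=1$, an odd $u'$ would force $t'^2\equiv 1+m\equiv 2$ or $6\pmod 8$, impossible) and the positivity $t>2$ needed so that both coprime factors are positive; these are routine and do not constitute gaps.
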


 \begin{proof}
By Dirichlet's Unit Theorem, the quotient group $O_K^\times/\bmu_K$ is a free abelian group
of rank 1 containing $O_F^\times/\{\pm 1\}\cong \dangle{-\epsilon}$ as a subgroup of finite
index. In fact, we have $[O_K^\times/\bmu_K: O_F^\times/\{\pm 1\}]\leq
2$ by \cite[Theorem 4.12]{Washington-cyclotomic} as $K$ is a
CM-field with maximal totally real subfield $F$. 

Since both $p_i\equiv 3 \pmod{4}$, it follows from
\cite[(V.1.7)]{ANT-Frohlich-Taylor} that the norm $\Nm_{F/\qq}(\epsilon)=+1$.
   By \cite[Lemma 3]{MR0441914},  $p_1\epsilon$ is a perfect square in $F^\times$. 
   Write $p_1\epsilon=(x+y\sqrt{p_1p_2})^2$ with $x,y\in \qq$. Then 
   \[\sqrt{-\epsilon}=\sqrt{p_1\epsilon}\cdot
   \frac{-1}{\sqrt{-p_1}}=(x+y\sqrt{p_1p_2})\cdot
   \frac{-1}{\sqrt{-p_1}}\in \qq\sqrt{-p_1}+
   \qq\sqrt{-p_2}\subset K.\]
In particular, $[O_K^\times/\bmu_K: O_F^\times/\{\pm 1\}]\geq 2$. It follows that $[O_K^\times/\bmu_K: O_F^\times/\{\pm
1\}]=2$, and $O_K^\times/\bmu_K\cong \dangle{\sqrt{-\epsilon}}$. Hence
$O_K^\times =\dangle{\sqrt{-\epsilon}}\times \bmu_K$. 

By our assumption on $p_i$,  the prime $2$ is unramified
in $O_K$. One easily checks that the following statements are equivalent:
   \begin{enumerate}
   \item $\epsilon\in \zz[\sqrt{p_1p_2}]=\zz+2O_F$; 
   \item $\epsilon\equiv 1 \pmod{2O_F}$;
   \item $\sqrt{-\epsilon}\equiv 1
   \pmod{2O_K}$;
 \item $\sqrt{-\epsilon}\in \zz+2O_K$. 
   \end{enumerate}

By Exercise~42(d) of \cite[Chapter~2]{MR0457396}, a $\zz$-basis of
$O_K$ is given by  
\[\left\{1, \quad\frac{1+\sqrt{-p_1}}{2}, \quad
\frac{1+\sqrt{-p_2}}{2},\quad \frac{(1+\sqrt{-p_1})(1+\sqrt{-p_2})}{4}
\right\}.\]
It follows that 
\begin{gather*}
  O_K\cap (\qq\sqrt{-p_1}+
   \qq\sqrt{-p_2})=\zz\sqrt{-p_1}+
   \zz(\sqrt{-p_1}+\sqrt{-p_2})/2;\\
(\zz+2O_K)\cap (\qq\sqrt{-p_1}+
   \qq\sqrt{-p_2})=\zz\sqrt{-p_1}+
   \zz\sqrt{-p_2}.
\end{gather*}
Therefore, if $\epsilon\in \zz[\sqrt{p_1p_2}]$, then
$\sqrt{-\epsilon}\in \zz\sqrt{-p_1}+
   \zz\sqrt{-p_2}$. Otherwise 
$\sqrt{-\epsilon}$ lies in $\zz\sqrt{-p_1}+
   \zz(\sqrt{-p_1}+\sqrt{-p_2})/2$ but not in $\zz\sqrt{-p_1}+
   \zz\sqrt{-p_2}$. Hence $\sqrt{-\epsilon}\equiv (\sqrt{-p_1}+\sqrt{-p_2})/2
   \pmod{\zz\sqrt{-p_1}+\zz\sqrt{-p_2}}$ in this case. 
 \end{proof}

We return to the assumption that $K=\Q(\sqrt{-p}, \sqrt{-3})$ with $p\equiv
3\pmod{4}$ and $p\neq 3$. Note that $\bmu_K=\dangle{\zeta_6}\subset
\calR_{sp}^\times$, and $\calR_{sp}\cap
(\Q\sqrt{-p}+\Q\sqrt{-3})=(\Z\sqrt{-p}+\Z\sqrt{-3})$.  Let $\epsilon$ be the fundamental unit of
$F=\Q(\sqrt{3p})$. If $\epsilon\in \Z[\sqrt{3p}]$, then
$\sqrt{-\epsilon}\in \calR_{sp}$, and hence
$\calR_{sp}^\times=O_K^\times$.  This holds in particular when
$p\equiv 3 \pmod{8}$ and $p\neq 3$ as remarked after (\ref{eq:varpi_d}). 
 Assume that $p\equiv 7\pmod{8}$ and
$\epsilon\not\in\Z[\sqrt{3p}]$. Then  $(O_F/2O_F)^\times \simeq
\F_4^\times$ and $\epsilon^3\in \Z+2O_F=\Z[\sqrt{3p}]$. On the
other hand, $[(O_K/2O_K)^\times:
(\calR_{sp}/2O_K)^\times]=[(\F_4^\times)^2:\F_4^\times]=3$, so we have $\sqrt{-\epsilon}\not\in \calR_{sp}$ but
$(\sqrt{-\epsilon})^3\in \calR_{sp}$. 

In summary, we find that 
\[[O_K^\times: \calR_{sp}^\times]=[O_F^\times :\Z[\sqrt{3p}]^\times]=
\begin{cases}
  1 &\quad \text{if } \epsilon \in \Z[\sqrt{3p}];\\
  3 & \quad \text{otherwise.} 
\end{cases}\]
Therefore, we have $h(\calR_{sp})=\varpi_{3p}h(O_K)$, where $\varpi_{3p}=3/[O_F^\times
:\Z[\sqrt{3p}]^\times]$ as defined in (\ref{eq:varpi_d}).




\section*{Acknowledgements}
J.~Xue is partially supported
by the 1000-plan program for young scholars of PRC. He thanks Academia
Sinica and the NCTS for their warm hospitality and great working
conditions. TC Yang and CF Yu 
are partially supported by the grants MoST 100-2628-M-001-006-MY4,
103-2811-M-001-142, 104-2115-M-001-001MY3 and 104-2811-M-001-066.

\bibliographystyle{plain}
\bibliography{TeXBiB}
\end{document}